\newtheorem{lem}{Lemma}
\newtheorem{thm}{Theorem}
\DeclareMathOperator*{\argmin}{arg\,min}
\providecommand{\algorithmname}{Algorithm}
\newcommand{\cfa}{\gamma}       
\newcommand{\cfb}{\delta}
\newcommand{\vecb}{b}
\newcommand{\matA}{A}
\newcommand{\Herr}[1]{\| x - x_{#1} \|^2_{\matA}}
\begin{document}
\title{Estimating the error in CG-like algorithms\\ for least-squares and least-norm problems}

\author[a]{Jan Pape\v{z}}
\author[b]{Petr Tich\'y}
\affil[a]{Institute of Mathematics of the Czech Academy of Sciences, Prague, Czech Republic}
\affil[b]{Faculty of Mathematics and Physics, Charles University, Prague, Czech Republic}
\setcounter{Maxaffil}{0}
\renewcommand\Affilfont{\itshape\small}

\date{version of \today}
\maketitle

\begin{abstract}
    In [Meurant, Pape\v{z}, Tich\'{y}; Numerical Algorithms~88, 2021], we presented an adaptive estimate for the energy norm of the error in the conjugate gradient (CG) method. In this paper, we extend the estimate to algorithms for solving linear approximation problems with a general, possibly rectangular matrix that are based on applying CG to a system with a positive (semi-)definite matrix build from the original matrix. We show that the resulting estimate preserves its key properties: it can be very cheaply evaluated, and it is numerically reliable in finite-precision arithmetic under some mild assumptions. We discuss algorithms
    based on Hestenes--Stiefel-like implementation (often called CGLS and CGNE in the literature) as well as on bidiagonalization (LSQR and CRAIG),
    and both unpreconditioned and preconditioned variants. The numerical experiments confirm the robustness and very satisfactory behaviour of the estimate. 
\end{abstract}

\section*{Introduction}

Solving linear approximation problems (with a general, possibly rectangular matrix) is a common task in scientific computing. 
In this paper, we consider a least-squares problem
\[
    \min_{z \in \mathbb{R}^n} \| b - A z \|, \qquad A \in \mathbb{R}^{m \times n},\ b \in \mathbb{R}^{m}, \ m \geq n,
\]
and a least-norm problem
\[
    \min_{z \in \mathbb{R}^n} \| z \| \quad \mbox{subject to} \quad Az = b, \qquad A \in \mathbb{R}^{m \times n},\ b \in \mathbb{R}^{m},\ b \in \mathcal{R}(A),    
\]
where $\|\cdot\|$ denotes the Euclidean norm and 
$\mathcal{R}(A)$ is the \emph{range} of $A$, i.e., the space spanned by columns of $A$.

When $A$ is large and sparse, an iterative method is typically used to get an approximation to a (possibly non-unique) solution of the problem. For using any iterative method, one needs an appropriate stopping criterion which typically requires an estimate 
of some quantity that measures the quality of the computed approximation.

Krylov subspace methods are among the most widely used and studied methods for iterative solution of least-squares and least-norm problems. Their mathematical derivation is based on a minimization of a certain error or residual norm over a Krylov subspace (of a successively increasing dimension); see, e.g,~\cite{LieStrBook13}. Typically, there are several algorithms that are mathematically equivalent (they are based on the same minimization problem over the same subspaces) but they differ in the implementation, potentially leading to a different behavior in finite-precision arithmetic.

The purpose of this paper is to extend a (heuristic-based) adaptive error estimate derived in~\cite{MePaTi2021} for the conjugate gradient (CG) method to a class of algorithms for solving the above problems. We consider various algorithmic variants that are mathematically equivalent to applying CG to a system with a symmetric positive {(semi-)definite} matrix $A^TA$ or $AA^T$ build from the original matrix~$A$.
In particular, the algorithms discussed in this paper are CGLS, LSQR, CGNE, and CRAIG (Craig's method based on bidiagonalization).
For each of these algorithms we present the derivation of the estimate and show that the two key properties of the estimate known from CG persist. 
First, the estimate can be evaluated very cheaply. Second, under the assumption that the local orthogonality is preserved during finite-precision computations, and that the maximal attainable accuracy has not yet been reached, the estimate is numerically reliable even if the convergence of the algorithms is strongly influenced by rounding errors. The convergence may be significantly delayed but the delay is reflected in the estimate; it estimates the actual error of the computed approximation. The rounding error analysis concerning local orthogonality for the CG algorithm can be found in \cite{StTi02}; see also \cite{B:Me2006}. Such an analysis would be doable also for the algorithms discussed in this paper, however, it would require many additional technical details.
The maximal attainable accuracy was analyzed, e.g., in \cite{Gr1989} and \cite{SlVoFo1994} in a more general context. Note that some of the estimates discussed in this paper can be found in the literature, see, e.g., \cite{JiTi09,ArGr2012,Ar2013,EsOrSa2019a},
but without using our adaptive technique and also without any discussion on using the estimates in finite-precision arithmetic.

The paper is organized as follows. We first briefly recall the adaptive error estimate of \cite{MePaTi2021}. Then, in \Cref{sec:LSQR_CGLS} we present the least-squares problem and its solution by CGLS and LSQR, proposed in \cite{HeSt1952} and \cite{PaSa1982}, respectively. In \Cref{sec:Craigestim}, we discuss the least-norm problem and the error estimate for Craig's method (proposed originally in~\cite{Cra54thesis,Cr1955}, implemented using CGNE (\cite[p.~504]{B:FaFa1963}), and the algorithm based on Golub--Kahan bidiagonalization \cite{PaSa1982}
that we call CRAIG; see, e.g., \cite{Sa1995a}.
Then, in \Cref{sec:precond}, we discuss the estimate for preconditioned variants of the algorithms. The results of numerical experiments are given in \Cref{sec:numexp} and the paper ends with concluding remarks.
The MATLAB codes of the algorithms with error estimates are available from the GitHub repository~\cite{Github_repo}.

\section{Adaptive error estimate in CG}
\label{sec:recallestimate}

In this section we briefly recall the 
adaptive error estimate derived in \cite{MePaTi2021} for the conjugate gradient method. 

First recall the idea of the CG method for solving a system $\matA x=\vecb$ with a symmetric and positive definite matrix~$\matA$.
Starting with an initial guess~$x_0$ and the associated residual $r_0 = \vecb - \matA{}x_0$, CG generates the approximations 
\[
    x_k \in x_0 + \mathcal{K}_k(\matA,r_0), \qquad \mathcal{K}_k(\matA,r_0) \equiv \mbox{span}\{ r_0, \matA r_0, \ldots, \matA^{k-1} r_0\},
\]
such that
\begin{equation}
    \label{eq:CG_minimization}
    x_k\, = \argmin_{y \in x_0 + \mathcal{K}_k(\matA,r_0)} \| x - y \|_{\matA},
\end{equation}
where $\| \cdot \|_{\matA}$ denotes the $A$-norm, $\| v\|_A^2 = v^T A v$.
The standard Hestenes and Stiefel implementation of the CG method is given in \Cref{alg:cg}. Therein, we put on line 6 a call of the function $\mathtt{adaptive}(\cdot)$ that performs the computation of the adaptive error estimate and is explained below.

Note that
CG  can be applied also to a system $\matA x=\vecb$ with a matrix~$\matA$ that is symmetric positive semi-definite only;
see, e.g.,~\cite{Ka88}.
Then $\| \cdot \|_{\matA}$ is a norm on $\mathcal{R}(\matA)$ only,
and we need an additional assumption so that~\eqref{eq:CG_minimization} makes sense.
In particular, we must assume that the system is consistent, i.e., that $\vecb \in \mathcal{R}(\matA)$, and choose the initial guess $x_0$ from $\mathcal{R}(\matA)$. This assures that $\mathcal{K}_k(\matA,r_0) \subset \mathcal{R}(\matA)$ and $x_k \in \mathcal{R}(\matA)$, for $k = 0, 1, \ldots$, so that the minimization~\eqref{eq:CG_minimization} can be considered over~$\mathcal{R}(\matA)$. 
Note that in finite-precision arithmetic, the singularity of the system matrix~$\matA$ can cause some numerical instabilities, in particular, the error norm can start to increase after reaching the level of maximal attainable accuracy; see \cite[Section~4]{Ka88}.

\begin{algorithm}[htp]
\caption{Conjugate Gradients for $Ax=b$ (CG)} \label{alg:cg}

\begin{algorithmic}[1]
\STATE \textbf{input} $\matA$, $b$, $x_{0}$
\STATE $r_{0}=\vecb-\matA x_{0}$, $p_{0}=r_{0}$
\STATE $\ell = 0$, $\tau = 0.25$\hfill\textcolor{gray}{prescribe the tolerance for error estimation}
\FOR{$k=0,1,\dots$ until convergence}
    \medskip
    \STATE $\cfa_{k}= {\|r_{k}\|^2}/{\|p_k\|_{\matA}^2} = 
    {r_{k}^{T}r_{k}}\, / \, {p_{k}^{T}{\matA}p_{k}}$ \smallskip
    \STATE ${\tt adaptive}\/(k, \ell,\Delta_k = \cfa_{k} \|r_{k}\|^2, \tau)$
    \STATE $x_{k+1}=x_{k}+\cfa_{k}p_{k}$
    \STATE $r_{k+1}=r_{k}-\cfa_{k}\matA p_{k}$
    
    \STATE $\cfb_{k+1}= \| r_{k+1}\|^2 / \| r_{k} \|^2$
    
    \STATE $p_{k+1}=r_{k+1}+\cfb_{k+1}p_{k}$
    
    \smallskip
\ENDFOR
\end{algorithmic}

\end{algorithm}

The adaptive error estimate of \cite{MePaTi2021} is based on the formula (which we call the Hestenes--Stiefel formula),
\begin{equation}
\label{eq:errkkd}
    \Herr{\ell} = \sum^k_{j=\ell} \Delta_j + \Herr{k+1}, \qquad \Delta_j \equiv \cfa_{j} \| r_{j} \|^2,
\end{equation}
for $\ell \leq k$, and on a procedure that adaptively finds $\ell = \ell(k)$, for a given $k$, such that
\[
    \frac{\Herr{\ell} - \sum^k_{j=\ell} \Delta_j}{\Herr{\ell}} \leq \tau
\]
for some prescribed tolerance~$\tau \in (0,1)$ (typically set as $0.25$).
Then the error $\Herr{k}$ is estimated by 
\[
    \Delta_{\ell:k} \equiv \sum^k_{j=\ell} \Delta_j  \approx \Herr{\ell}.
\]
From \eqref{eq:errkkd}, it is clear that $\Delta_{\ell:k}$ represents a lower bound on $\Herr{\ell}$.
The bound $\Delta_{\ell:k}$ is very cheap to evaluate, it is numerically stable \cite{StTi02,StTi05},
and if $\ell$ 
is set properly, it provides a sufficiently accurate estimate.

\begin{algorithm}[htp]
\caption{${\mathtt{adaptive}()}$ --- adaptive choice of the delay $k-\ell$} \label{alg:adaptive}

\begin{algorithmic}[1]
\STATE \textbf{input} $k$, $\ell$, $\Delta_k$, $\tau$
\STATE $\mathrm{TOL} = 10^{-4}$\hfill\textcolor{gray}{set a parameter for error estimation}
\STATE \textbf{output} $\ell$
\STATE set $m$ as the largest index~$j$, $0 \leq j < k$, such that
\[
    \frac{\Delta_{\ell:k}}{\Delta_{j:k}} \leq \mbox{TOL}
\]
if such an $m$ does not exist, set $m=0$
\STATE determine $S$ as
\[
    S = \max_{m \leq j < k} \Delta_{j:k} / \Delta_j
\]
\WHILE{$S \Delta_{k} / \Delta_{\ell:k-1} \leq \tau$ \textbf{and} $\ell < k$}
\STATE accept $\Delta_{\ell:k}$ as an estimate of the error at $\ell$th iteration
\STATE $\ell = \ell+1$
\ENDWHILE
\end{algorithmic}
\end{algorithm}

The function $\mathtt{adaptive}(\cdot)$ for setting the (adaptive) delay $k-\ell$ and computing the error estimate is presented in \Cref{alg:adaptive}. 
As one can observe, in the $k$th CG iteration when the approximation $x_{k+1}$ is computed, the  error estimate is evaluated for some previous approximation $x_\ell$.
For a detailed description, derivation, and reasoning, see~\cite{MePaTi2021}. 
Note that in the original paper~\cite{MePaTi2021}, $\Delta_{\ell:k-1}$ was used on line~7 of \Cref{alg:adaptive} for estimating the error. Since $\Delta_{\ell:k}$ is available and $\Delta_{\ell:k-1} < \Delta_{\ell:k} \leq \Herr{\ell}$, we suggest using a tighter estimate. Obviously, this function can be easily added to the existing CG codes. 

As described in~\cite[Section~4]{MePaTi2021}, the adaptive error estimate can also be extended to preconditioned CG while preserving all key properties. For the sake of simplicity, the resulting algorithm is not presented here and we refer to~\cite[Section~4]{MePaTi2021}.

\section{Estimating the error in least-squares problems}
\label{sec:LSQR_CGLS}
\label{sec:LS}

Let us consider a least-squares problem
\begin{equation}
\label{eq:lsproblem}
    x = \argmin_{z \in \mathbb{R}^n} \| b - A z \|
\end{equation}
where $A \in \mathbb{R}^{m \times n}$, $m \geq n$, is a given matrix and $b \in \mathbb{R}^{m}$ is a right-hand side. It is well known that $x$ is a least-squares solution if and only if $b - Ax \perp \mathcal{R}(A)$, or equivalently, if and only if $x$ is the solution of the system of \emph{normal equations}
\begin{equation}
\label{eq:normeq}
    A^T A x = A^T b.
\end{equation}
If $\mathrm{rank}(A) = n$, then the matrix $A^T A$ is nonsingular and the solution of~\eqref{eq:lsproblem} is unique. Moreover, $A^T A$ is symmetric and positive definite and, therefore, the CG method can be applied to~\eqref{eq:normeq}.

When CG is applied to \eqref{eq:normeq} starting with an initial guess $x_0$ and $r_0 = b - Ax_0$, the approximation~$x_k$ computed at the $k$th iteration satisfies
\[
    x_k \in x_0 + \mathcal{K}_k(A^TA,A^Tr_0),    
\]
and
\begin{equation}
    \label{eq:CGLS_minimization}
    x_k = \argmin_{y \in x_0 + \mathcal{K}_k(A^TA,A^Tr_0)} \| x - y \|_{A^TA}.
\end{equation}
There are several ways how CG for~\eqref{eq:normeq} can be implemented.
For an overview 
we refer to~\cite{BjElSt1998} or to a series of papers \cite{EsOrSa2019a,EsOrSa2019b,EsOrSa2019}.

\begin{algorithm}[htp]
\caption{CGLS}
\label{alg:cgls}

\begin{algorithmic}[1]

\STATE \textbf{input} $A$, $b$, $x_{0}$

\STATE $r_{0}=b-Ax_{0}$

\STATE $s_{0}=p_{0} = A^{T}r_{0}$

\FOR{$k=0,1,\dots$}

\STATE $q_{k}=Ap_{k}$
\STATE $\cfa_{k} = \| s_{k} \|^2 / \| q_{k} \|^2$
\STATE $x_{k+1}=x_{k}+\cfa_{k}p_{k}$
\STATE $r_{k+1}=r_{k}-\cfa_{k}q_{k}$
\STATE $s_{k+1}=A^Tr_{k+1}$
\STATE $\cfb_{k+1}=\| s_{k+1} \|^2/\| s_{k} \|^2$
\STATE $p_{k+1}=s_{k+1}+\cfb_{k+1}p_{k}$

\ENDFOR

\end{algorithmic}
\end{algorithm}
The CGLS algorithm (\Cref{alg:cgls}) has been described already in the CG seminal paper~\cite[p.~424]{HeSt1952}. It corresponds to applying the CG algorithm to the system of normal equations with a slight algebraic rearrangement to avoid the vectors of the form~$A^TAp$. Due to this rearrangement, CGLS has better numerical properties than CG 
(\Cref{alg:cg}) naively applied to \eqref{eq:normeq}. The CGLS algorithm can be found, e.g., in \cite[Section~7.1]{PaSa1982} and \cite[Algorithm~3.1]{BjElSt1998}.
\begin{algorithm}[htp]
\caption{LSQR}
\label{alg:LSQR}

\begin{algorithmic}[1]

\STATE \textbf{input} $A$, $b$

\STATE $\beta_{1}u_{1}=b$\hfill
\textcolor{gray}{$\beta_{i}$ are the normalization coefficients to have $\|u_{i}\| = 1$, $i = 1,2, \ldots$}
\STATE $\alpha_{1}v_{1}=A^{T}u_{1}$
\hfill
\textcolor{gray}{$\alpha_{i}$ are the normalization coefficients to have $\|v_{i}\| = 1$, $i = 1,2, \ldots$}
\STATE $w_1 = v_1$
\STATE $x_0$ = 0
\STATE $\bar{\phi}_1 = \beta_1$
\STATE $\bar{\rho}_1 = \alpha_1$

\FOR{$k=1,2,\dots$}

\STATE $\beta_{k+1}u_{k+1}=Av_{k}-\alpha_{k}u_{k}$
\STATE $\alpha_{k+1}v_{k+1}=A^{T}u_{k+1}-\beta_{k+1}v_{k}$

\STATE $\rho_k = (\bar{\rho}_k^2 + \beta^2_{k+1})^{1/2}$
\STATE $c_k = \bar{\rho}_k / \rho_k$
\STATE $s_k = \beta_{k+1} / \rho_{k}$
\STATE $\theta_{k+1} = s_{k} \alpha_{k+1}$
\STATE $\bar{\rho}_{k+1} = -c_{k} \alpha_{k+1}$
\STATE $\phi_{k} = c_{k} \bar{\phi}_{k}$
\STATE $\bar{\phi}_{k+1} = s_{k} \bar{\phi}_{k}$

\STATE $x_{k}=x_{k-1} + (\phi_{k} / \rho_{k} ) w_{k}$
\STATE $w_{k+1}=v_{k+1} + (\theta_{k+1} / \rho_{k} ) w_{k}$

\ENDFOR

\end{algorithmic}
\end{algorithm}
A mathematically equivalent algorithm to CGLS based on Golub--Kahan bidiagonalization has been derived in~\cite{PaSa1982}. The algorithm is called LSQR and it is given in \Cref{alg:LSQR}.

Since CG can also be applied to a singular system (see the corresponding discussion in \Cref{sec:recallestimate}), the CGLS and LSQR algorithms can also be applied if $\mathrm{rank}(A) < n$. In such case, however, the least-squares solution of~\eqref{eq:lsproblem} is not unique.

\subsection{Error estimation in CGLS and LSQR}
Let $b_{|\mathcal{R}(A)}$ be the orthogonal projection of $b$ onto the range of $A$, and denote
\[
   r= b - b_{|\mathcal{R}(A)}.
\] 
In this section we focus on the estimation of the error
\[
    \| x - x_k \|^2_{A^TA} = \| A(x - x_k) \|^2 = \| b - r - Ax_k \|^2 = \| r_k - r \|^2 = \| r_k \|^2 - \| r \|^2,
\]
where we set $r_k = b - Ax_k$ and used the fact that $r \perp r - r_k$.
Therefore, if $b \in \mathcal{R}(A)$, which means that $r=0$, the error $\| x - x_k \|^2_{A^TA}$ is equal to the fully computable quantity $\| r_k \|^2 = \| b - Ax_k \|^2$ and there is no need for an estimator. 
Hereafter, we therefore assume that $b \notin \mathcal{R}(A)$. Then the norm $\| r \|$ is non-zero and unknown, and the residual norm $\| r_k \|$ alone may not provide a sufficient information to set a proper stopping criterion. This has been discussed, e.g., in \cite{JiTi09}. It seems natural to set a stopping criterion based on comparison of (the norms of) the residual~$r$ and the iterative residual~$r_k$. Such discussion is, however, beyond the scope of this paper.

The error norm $\| x - x_k \|_{A^TA}$ is the relevant quantity for the CGLS and LSQR algorithms as it is minimized within the iterations (it is therefore monotone) and, as we will see in \Cref{sec:precond}, this norm is also minimized if a preconditioning is considered. In many practical situations, the users are also interested in estimating the Euclidean norm of the error $\| x - x_k \|$. This error norm can be efficiently estimated in CG without preconditioning, based on  a direct relation between the Euclidean and the energy norm of the error, see, e.g. \cite{HeSt1952} or \cite{StTi02}, or, based on the relation of CG with Gauss quadrature \cite{GoSt1994,Me2005}. Then similar techniques for improving the accuracy of the estimate, that use an analog of \eqref{eq:errkkd}, can be developed. All these results 
are transferable from CG to CGLS and LSQR, but, again, without preconditioning. When using preconditioning, there is no direct relation between the
Euclidean and the energy norm of the error in general. However, one can bound the Euclidean norm of the error using
$$
    \Vert x - x_k \Vert \leq \frac{1}{\sigma}\, \Vert x - x_k\Vert_{A^TA}
$$
that requires an a~priori knowledge of a lower bound $\sigma$ on the smallest singular value of~$A$; see, e.g., \cite{Ar2013, EsOrSa2019,Hal20}. Clearly, the above upper bound need not represent a good estimate of $\| x - x_k \|$, but, in general, we do not have anything better in hands. Note that to use the above upper bound, we still need to have an estimate of the quantity $\| x - x_k \|_{A^TA}$, which is our aim in this paper.

\subsection{Error estimate in CGLS}
The first step to derive an estimate as in~\cite{MePaTi2021} is to find an expression analogous to~\eqref{eq:errkkd}.
By a simple algebraic manipulation, see \cite[p. 798]{StTi05}, we obtain
\begin{equation}\label{eqn:LSQR_HS}
\left\Vert x-x_{k}\right\Vert _{A^{T}A}^{2}-\left\Vert x-x_{k+1}\right\Vert _{A^{T}A}^{2} = \Vert x_{k+1}-x_{k} \Vert_{A^{T}A}^2 + 2\left(x_{k+1}-x_{k}\right)^{T}A^{T}A\left(x-x_{k+1}\right).
\end{equation}
In the following we use only relations that do hold (up to a small inaccuracy) also during computations in finite-precision arithmetic. To shorten the terminology, we say that these identities hold {\em numerically}.  In particular, we avoid using global orthogonality of vectors that is usually lost quickly.
Since in CGLS
\begin{equation}\label{eqn:rel1}
    x_{k+1}-x_{k} = \cfa_{k} p_k,\qquad s_{k+1} = A^TA(x-x_{k+1})
\end{equation}
we obtain 
$$
   \left\Vert x-x_{k}\right\Vert _{A^{T}A}^{2}-\left\Vert x-x_{k+1}\right\Vert _{A^{T}A}^{2} =  \cfa_k^2 \| p_k \|_{A^TA}^2 
    + 2\cfa_{k} p_k^{T}s_{k+1}.
$$
Using the formula for computing $\gamma_k$ (line 6)
and the (local) orthogonality between $s_{k+1}$ and $p_k$,
\begin{equation}\label{eqn:rel2}
  \cfa_k = \frac{\|s_k\|^2}{\| p_{k} \|_{A^TA}^2},\quad  p_k^T s_{k+1} = 0,
\end{equation}
we get an analog to~\eqref{eq:errkkd},
\begin{equation*}
    \| x - x_{\ell} \|^2_{A^TA} =  \sum_{j=\ell}^{k} \cfa_{j} \|s_j\|^2 + \| x - x_{k+1} \|^2_{A^TA}
\end{equation*}
and the error estimator 
\[
    \Delta^{\mathrm{CGLS}}_{\ell:k} \equiv \sum_{j=\ell}^{k} \cfa_{j} \|s_j\|^2 \approx \| x - x_{\ell} \|^2_{A^TA} = \| r_{\ell} \|^2 - \| r \|^2.
\]
To derive the analog of~\eqref{eq:errkkd} we have used relations 
\eqref{eqn:rel1} and \eqref{eqn:rel2}, which do hold numerically until the level of maximal attainable accuracy is reached; see, e.g., \cite{StTi02}.

\subsection{Error estimate in LSQR}
\label{sec:LSQR_deriveLB}
For LSQR, the situation is more delicate. Here the explanation needs more space and goes hand in hand with a derivation, inspired by ideas of \cite{StTi02}.

The vectors $u_i$ and $v_i$ generated in LSQR (\Cref{alg:LSQR}) by  Golub--Kahan bidiagonalization satisfy
\begin{eqnarray*}
AV_{k} & = & U_{k+1}B_{k}\\
A^{T}U_{k+1} & = & V_{k}B_{k}^{T}+\alpha_{k+1}v_{k+1}e_{k+1}^{T}
\end{eqnarray*}
where $V_k = [v_1, v_2, \ldots, v_k]$, $U_{k+1} = [u_1, u_2, \ldots, u_{k+1}]$, and
\[
B_{k}=\left[\begin{array}{ccccc}
\alpha_{1}\\
\beta_{2} & \ddots\\
 &  \ddots & \ddots\\
 &  & \beta_{k-1} & \alpha_{k-1}\\
 &  &  & \beta_{k} & \alpha_{k}\\
 &  &  &  & \beta_{k+1}
\end{array}\right].
\]
By a simple algebraic manipulation we get 
$$
A^{T}AV_{k}=V_{k}T_k+\alpha_{k+1}\beta_{k+1}v_{k+1}e_{k+1}^{T},\qquad T_k = B_{k}^{T}B_{k}.
$$
so that $v_i$ can be seen as Lanczos vectors generated by  the corresponding three-term recurrence for $A^TA$.

Assuming $x_{0}=0$, the LSQR approximation is given by 
\begin{equation}\label{eq:LSQR}
x_{k}=V_{k}y_{k},\quad y_k = \arg \min\|\beta_{1}e_{1}-B_{k}y\|
\end{equation}
and the corresponding least-squares problem is solved 
using the QR factorization of $B_{k}$. In particular, 
\[
Q_{k} \begin{bmatrix}B_{k} & \beta_{1}e_{1}\end{bmatrix}=\begin{bmatrix}R_{k} & f_{k}\\
 & \tilde{\phi}_{k+1}
\end{bmatrix}=\left[\begin{array}{cccccc}
\rho_{1} & \theta_{2} &  &  &  & \phi_{1}\\
 & \rho_{2} & \theta_{3} &  &  & \phi_{2}\\
 &  & \ddots & \ddots &  & \vdots\\
 &  &  & \rho_{k-1} & \theta_{k} & \phi_{k-1}\\
 &  &  &  & \rho_{k} & \phi_{k}\\
 &  &  &  &  & \tilde{\phi}_{k+1}
\end{array}\right],
\]
and $y_k$ is the solution of $R_k y = f_k$.
Therefore, 
$
T_{k}=B_{k}^{T}B_{k}=R_{k}^{T}R_{k}
$,
so that $R_{k}^{T}R_{k}$ represents Cholesky factorization of $T_{k}$. Since CGLS (CG applied to $A^{T}Ax=A^{T}b$) computes (implicitly) also the Cholesky factorization of $T_{k}$, see, e.g., \cite{MeTi2013}, we get the relation among coefficients that appear in both algorithms,
\begin{equation}\label{eq:coeff}
\left[\begin{array}{cccc}
\frac{1}{\sqrt{\cfa_{0}}}\\
\sqrt{\frac{\delta_{1}}{\cfa_{0}}} & \ddots\\
 & \ddots & \ddots\\
 &  & \sqrt{\frac{\cfb_{k-1}}{\cfa_{k-2}}} & \frac{1}{\sqrt{\cfa_{k-1}}}
\end{array}\right]=\left[\begin{array}{ccccc}
\rho_{1}\\
\theta_{2} & \ddots\\
 &  & \ddots\\
 &  & \theta_{k-1} & \rho_{k-1}\\
 &  &  & \theta_{k} & \rho_{k}
\end{array}\right]
\end{equation}
where $\cfa_{i}$ and $\cfb_{i}$ are the CGLS coefficients; see \Cref{alg:cgls}.
\smallskip

It is well known that during finite-precision computations, the {\em global} orthogonality 
among the vectors in columns of $V_k$ and $U_k$ is usually lost very quickly. In other 
words, one cannot expect that $V_k^T V_k = I = U_k^T U_k$. Our aim is to explain 
that despite the loss of global orthogonality and under 
some natural assumptions which will be specified later,
the Hestenes--Stiefel formula for LSQR, which has the form
\begin{equation}
\left\Vert x-x_{k-1}\right\Vert _{A^{T}A}^{2}-\left\Vert x-x_{k}\right\Vert _{A^{T}A}^{2}=\phi_{k}^{2},\label{eq:keyidentity}
\end{equation}
still holds numerically.
Note that we cannot directly apply results of \cite{StTi02,StTi05}
since LSQR (Algorithm~\ref{alg:LSQR}) uses different recurrences than CG
(Algorithm~\ref{alg:cg}). We do not present here a detailed rounding error 
analysis for the formula \eqref{eq:keyidentity} like in \cite{StTi02,StTi05}.
Instead, we focus on a justification based on a convenient derivation of the formula \eqref{eq:keyidentity} and the discussion about preserving local orthogonality.
We start with the following lemma; see also \cite[p.~52]{PaSa1982}.
\begin{lem}\label{lem:LSQR}
For the quantities generated by LSQR (Algorithm~\ref{alg:LSQR}) it holds that 
\begin{eqnarray}
A^{T}(b-Ax_{k}) & = & -\theta_{k+1}\phi_{k}v_{k+1}.\label{eq:mulvk1}
\end{eqnarray}
\end{lem}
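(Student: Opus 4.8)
The plan is to compute $A^T(b - Ax_k) = A^Tb - A^TAx_k$ using the bidiagonalization relations and the QR-factorization structure, turning the claimed formula into an identity about the Lanczos/bidiagonalization coefficients and the QR rotations. First I would substitute $b = \beta_1 u_1 = \beta_1 U_{k+1} e_1$ and $x_k = V_k y_k$, and use the relation $A^T U_{k+1} = V_k B_k^T + \alpha_{k+1} v_{k+1} e_{k+1}^T$ together with $A^T A V_k = V_k T_k + \alpha_{k+1}\beta_{k+1} v_{k+1} e_{k+1}^T$ (with $T_k = B_k^T B_k$) to get
\[
A^T(b - Ax_k) = \beta_1 (V_k B_k^T + \alpha_{k+1} v_{k+1} e_{k+1}^T) e_1 - \bigl(V_k T_k + \alpha_{k+1}\beta_{k+1} v_{k+1} e_{k+1}^T\bigr) y_k.
\]
Since $e_{k+1}^T e_1 = 0$ for $k \geq 1$, the first $v_{k+1}$ term drops, and collecting the $V_k$ part gives $V_k(\beta_1 B_k^T e_1 - T_k y_k)$; because $y_k$ solves the normal equations $B_k^T B_k y_k = T_k y_k = \beta_1 B_k^T e_1$ (equivalently $R_k y_k = f_k$, $R_k^T f_k = \beta_1 B_k^T e_1$), this $V_k$ part vanishes. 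Hence $A^T(b - Ax_k) = -\alpha_{k+1}\beta_{k+1}\,(e_{k+1}^T y_k)\, v_{k+1}$, and everything reduces to identifying the scalar $\alpha_{k+1}\beta_{k+1}\,e_{k+1}^T y_k$ with $\theta_{k+1}\phi_k$.

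The remaining step is purely about the QR factorization of $B_k$. From $R_k y_k = f_k$, the last component is $e_{k+1}^T y_k = (e_k^T y_k) = \phi_k/\rho_k$ (the last row of the triangular solve, noting the off-diagonal $\theta$ lies above), so $\alpha_{k+1}\beta_{k+1}\, e_{k+1}^T y_k = \alpha_{k+1}\beta_{k+1}\,\phi_k/\rho_k$. It then remains to check $\alpha_{k+1}\beta_{k+1}/\rho_k = \theta_{k+1}$, i.e. $\theta_{k+1} = \alpha_{k+1}\beta_{k+1}/\rho_k$; but from Algorithm~\ref{alg:LSQR} we have $\theta_{k+1} = s_k \alpha_{k+1}$ and $s_k = \beta_{k+1}/\rho_k$, which gives exactly this. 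So $A^T(b - Ax_k) = -\theta_{k+1}\phi_k v_{k+1}$, as claimed. (One should be slightly careful about indexing conventions — whether $y_k$ has $k$ components and which entry is "last" — and align the sub/superscripts with the $B_k$, $R_k$ displayed in the excerpt; the $\bar\rho$, $\bar\phi$ "bar" quantities are the quantities prior to applying the $k$th rotation and I would avoid them in the clean statement, using the already-rotated $\rho_k$, $\phi_k$, $\theta_{k+1}$.)

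The main obstacle, such as it is, is bookkeeping rather than conceptual: getting the index ranges in $B_k$, $R_k$, $y_k$, $f_k$ consistent (the excerpt's $B_k$ is $(k+1)\times k$ with subdiagonal $\beta_{k+1}$, and $R_k$ is $k\times k$), and making sure the $v_{k+1}$ contribution really is the only surviving term — i.e. that $e_{k+1}^T e_1 = 0$ genuinely holds, which it does for $k \geq 1$. I expect no deep difficulty: the identity is a direct consequence of (a) the Lanczos-type three-term recurrence for $A^TA$ applied to the columns of $V_k$, (b) the fact that $x_k = V_k y_k$ with $y_k$ the solution of the projected least-squares problem, and (c) reading off the last row of the triangular system $R_k y_k = f_k$ and matching it with the recurrences for $\theta_{k+1}$, $s_k$, $\rho_k$, $\phi_k$ in Algorithm~\ref{alg:LSQR}.
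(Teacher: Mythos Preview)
Your proposal is correct and follows essentially the same route as the paper's proof: both use the bidiagonalization relations and the normal equations $B_k^TB_ky_k=\beta_1B_k^Te_1$ to kill the $V_k$-component and isolate a scalar multiple of $v_{k+1}$, then identify that scalar with $\theta_{k+1}\phi_k$. The only cosmetic difference is in the last step: you read off $(y_k)_k=\phi_k/\rho_k$ from the last row of $R_ky_k=f_k$ and combine it with $s_k=\beta_{k+1}/\rho_k$, whereas the paper instead extracts the last component of the projected residual $\beta_1e_1-B_ky_k=Q_k^T\begin{bmatrix}0\\ \bar\phi_{k+1}\end{bmatrix}$ as $-c_k\bar\phi_{k+1}$ and uses $\alpha_{k+1}c_k\bar\phi_{k+1}=\theta_{k+1}\phi_k$; these are the same identity. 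One small cleanup: in the relation $A^TAV_k=V_kT_k+\alpha_{k+1}\beta_{k+1}v_{k+1}e_{k}^T$ the basis vector is $e_k\in\mathbb{R}^k$, not $e_{k+1}$ (you effectively correct this yourself when you write $e_k^Ty_k$), so the ``$e_{k+1}^Te_1=0$'' justification is only needed for the $A^Tb$ term.
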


\begin{proof}
Note that $y_{k}$ is the solution of the least-squares problem \eqref{eq:LSQR} so that $B_{k}^{T}B_{k}y_{k}=B_{k}^{T}\beta_{1}e_{1}$.
Since $x_{k}=V_{k}y_{k}$ and $AV_{k}=U_{k+1}B_{k}$, we get
\begin{eqnarray*}
A^{T}(b-Ax_{k}) & = & A^{T}U_{k+1}\left(\beta_{1}e_{1}-B_{k}y_{k}\right)\\
 & = & \left(V_{k}B_{k}^{T}+\alpha_{k+1}v_{k+1}e_{k+1}^{T}\right)\left(\beta_{1}e_{1}-B_{k}y_{k}\right)\\
 & = & V_{k}\left(B_{k}^{T}\beta_{1}e_{1}-B_{k}^{T}B_{k}y_{k}\right)-\alpha_{k+1}v_{k+1}e_{k+1}^{T}B_{k}y_{k}\\
 & = & -\alpha_{k+1}v_{k+1}\left[e_{k+1}^{T}B_{k}y_{k}\right]\\
 & = & \alpha_{k+1}v_{k+1}e_{k+1}^{T}\left(\beta_{1}e_{1}-B_{k}y_{k}\right),
\end{eqnarray*}
where we have used $e_{k+1}^{T}e_1=0$.
Using the QR factorization 
\[
\begin{bmatrix}B_{k} & \beta_{1}e_{1}\end{bmatrix}=Q_{k}^{T}\begin{bmatrix}R_{k} & f_{k}\\
 & \bar{\phi}_{k+1}
\end{bmatrix}
\]
and $R_{k}y_{k}=f_{k}$ we obtain 
\[
\beta_{1}e_{1}-B_{k}y_{k}=Q_{k}^{T}\begin{bmatrix}0\\
\bar{\phi}_{k+1}
\end{bmatrix};
\]
see \cite[equation (4.8)]{PaSa1982}. Hence, the last component of $\beta_{1}e_{1}-B_{k}y_{k}$
is given by $-c_{k}\bar{\phi}_{k+1}$. 
Moreover, using $\alpha_{k+1}c_{k}\bar{\phi}_{k+1}=\left(\alpha_{k+1}s_{k}\right)\left(c_{k}\bar{\phi}_{k}\right)=\theta_{k+1}\phi_{k}$
we get 
\begin{eqnarray*}
A^{T}(b-Ax_{k}) & = & -\alpha_{k+1}c_{k}\bar{\phi}_{k+1}v_{k+1}=
-\theta_{k+1}\phi_{k}v_{k+1}.
\end{eqnarray*}
\end{proof}

Since the relations $x_{k}=V_{k}y_{k}$ and $AV_{k}=U_{k+1}B_{k}$
hold also for the computed vectors and coefficients
 (up to an inaccuracy comparable to machine precision $\varepsilon$, norms of the vectors, and $\|A\|^{2}$), and since
the computation of $R_{k}$ is backward stable, we can expect that
\eqref{eq:mulvk1} holds numerically, until the level of maximal attainable accuracy is reached. 

Now we derive an analog of \eqref{eq:keyidentity} 
without using any orthogonality assumption. We will see that the resulting formula will contain {\em local} orthogonality terms 
\begin{equation}
\text{\ensuremath{\ell_{k}^{(w)}\equiv\frac{\theta_{k+1}}{\rho_{k}}v_{k+1}^{T}w_{k}}}
\label{eq:localterms}
\end{equation}
which are zero in exact arithmetic.

\begin{thm}\label{thm:LSQR}
For the quantities generated by LSQR (Algorithm~\ref{alg:LSQR}) it holds that 
\[
\left\Vert x-x_{k-1}\right\Vert _{A^{T}A}^{2}-\left\Vert x-x_{k}\right\Vert _{A^{T}A}^{2}=\phi_{k}^{2}\left({1+\ell_{k-1}^{(w)}}-\ell_{k}^{(w)}\right).
\]
\end{thm}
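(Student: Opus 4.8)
The plan is to expand $\left\Vert x-x_{k-1}\right\Vert _{A^{T}A}^{2}-\left\Vert x-x_{k}\right\Vert _{A^{T}A}^{2}$ using the recurrence $x_k = x_{k-1} + (\phi_k/\rho_k) w_k$, exactly as in the derivation of \eqref{eqn:LSQR_HS} for CGLS. Writing $x_k - x_{k-1} = (\phi_k/\rho_k) w_k$, the difference of squared energy norms equals $\Vert x_k - x_{k-1}\Vert_{A^TA}^2 + 2(x_k - x_{k-1})^T A^TA (x - x_k)$. For the cross term I would use \Cref{lem:LSQR}, which gives $A^TA(x-x_k) = A^T(b-Ax_k) = -\theta_{k+1}\phi_k v_{k+1}$, so the cross term becomes $-2(\phi_k/\rho_k)\theta_{k+1}\phi_k\, w_k^T v_{k+1} = -2\phi_k^2 \ell_k^{(w)}$, recalling the definition \eqref{eq:localterms} of $\ell_k^{(w)}$.

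Next I would handle the quadratic term $\Vert x_k - x_{k-1}\Vert_{A^TA}^2 = (\phi_k/\rho_k)^2 \Vert w_k\Vert_{A^TA}^2 = (\phi_k/\rho_k)^2 w_k^T A^TA\, w_k$. The goal is to show this equals $\phi_k^2(1 + \ell_{k-1}^{(w)})$. Using the recurrence $w_k = v_k + (\theta_k/\rho_{k-1}) w_{k-1}$ and the relation $A^TA V_k = V_k T_k + \alpha_{k+1}\beta_{k+1} v_{k+1} e_{k+1}^T$ (so $A^TA v_k$ is a combination of $v_{k-1}, v_k, v_{k+1}$ with the Lanczos coefficients, which via \eqref{eq:coeff} are the $\theta$'s and $\rho$'s), I expect $w_k^T A^TA\, w_k$ to collapse: the diagonal Lanczos coefficient contributes $\rho_k^2$, and the off-diagonal/subdiagonal contributions pair with the $(\theta_k/\rho_{k-1}) w_{k-1}$ part of $w_k$ to produce a term proportional to $v_k^T w_{k-1}$, i.e.\ an $\ell_{k-1}^{(w)}$ term, while all remaining pieces vanish by local orthogonality of the Lanczos vectors (consecutive $v_i^T v_{i+1} = 0$ holding numerically). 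Dividing by $\rho_k^2$ and multiplying by $\phi_k^2$ then yields $\phi_k^2(1 + \ell_{k-1}^{(w)})$. Combining with the cross term gives the claimed identity.

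Concretely, I would establish an intermediate relation such as $A^TA\, w_k = \rho_k (c_k$-weighted combination$)$ or, more cleanly, compute $V_k^T A^TA\, w_k$ and exploit that $w_k \in \mathcal{R}(V_k)$ up to a lower-triangular change of basis (since $W_k = V_k M_k$ for some unit-diagonal upper-triangular $M_k$ with the $\theta/\rho$ entries). Then $w_k^T A^TA\, w_k$ is an entry of $M_k^T V_k^T A^TA V_k M_k = M_k^T(T_k + \text{rank-one})M_k$, and since $T_k = R_k^T R_k$ with $R_k M_k$ having a particularly simple form (essentially $R_k$ times its own inverse-transpose-like factor), the bottom-right entry simplifies to $\rho_k^2$ plus a single local-orthogonality correction. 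I would keep the derivation purely in terms of identities that survive finite-precision arithmetic, avoiding global orthogonality of $V_k$, and flag explicitly where $v_k^T v_{k+1}=0$ and $v_{k+1}^T w_k \ne 0$ (the surviving local term) enter.

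The main obstacle I anticipate is the bookkeeping in the quadratic term: correctly tracking which cross-products between the $v$-part and the $w_{k-1}$-part of $w_k$ survive and assemble into exactly $\ell_{k-1}^{(w)}$ (rather than into $\ell_{k-1}^{(v)}$-type or $\ell_k^{(v)}$-type terms), and verifying that the $\alpha_{k+1}\beta_{k+1} v_{k+1} e_{k+1}^T$ rank-one perturbation in $A^TA V_k$ does not contribute a spurious term to $w_k^T A^TA\, w_k$ (it should not, since $e_{k+1}^T M_k e_k$ picks out a zero below the diagonal). Getting the indices and the cancellation pattern right — so that precisely the coefficient $(1 + \ell_{k-1}^{(w)} - \ell_k^{(w)})$ emerges — is the delicate part; once the structure is seen, each individual step is routine algebra with the QR/Lanczos coefficient identities \eqref{eq:coeff} and the bidiagonalization relations.
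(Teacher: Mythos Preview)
Your handling of the cross term is correct: $2(x_k-x_{k-1})^TA^TA(x-x_k)=-2\phi_k^2\ell_k^{(w)}$. The gap is in the quadratic term. You assert that $(\phi_k/\rho_k)^2\,w_k^TA^TA w_k=\phi_k^2(1+\ell_{k-1}^{(w)})$, but the correct value is $\phi_k^2(1+\ell_{k-1}^{(w)}+\ell_k^{(w)})$. The extra $+\ell_k^{(w)}$ comes precisely from the rank-one piece you dismiss: the $v_{k+1}$-component of $A^TA w_k$ equals $\alpha_{k+1}\beta_{k+1}$ times the coefficient of $v_k$ in $w_k$, and that coefficient is $1$, not $0$ (from $w_k=v_k+(\theta_k/\rho_{k-1})w_{k-1}$). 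Since $\alpha_{k+1}\beta_{k+1}=\theta_{k+1}\rho_k$, this contributes $(\phi_k/\rho_k)^2\cdot\theta_{k+1}\rho_k\cdot w_k^Tv_{k+1}=\phi_k^2\ell_k^{(w)}$. Your index argument that ``$e_{k+1}^TM_ke_k$ picks out a zero below the diagonal'' misreads the structure; the scalar actually picked out is the $(k,k)$ entry of $M_k$, which is $1$. With the corrected quadratic term your route does close: $\phi_k^2(1+\ell_{k-1}^{(w)}+\ell_k^{(w)})-2\phi_k^2\ell_k^{(w)}=\phi_k^2(1+\ell_{k-1}^{(w)}-\ell_k^{(w)})$.

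The paper avoids the quadratic term altogether by using the symmetric splitting
\[
\|x-x_{k-1}\|_{A^TA}^2-\|x-x_k\|_{A^TA}^2
=(x_k-x_{k-1})^TA^TA(x-x_{k-1})+(x_k-x_{k-1})^TA^TA(x-x_k)
\]
and applying Lemma~\ref{lem:LSQR} to \emph{both} factors $A^TA(x-x_{k-1})$ and $A^TA(x-x_k)$. After the scalar identity $-\theta_k\phi_{k-1}/(\rho_k\phi_k)=1$, everything reduces to $\phi_k^2\bigl(w_k^Tv_k-\ell_k^{(w)}\bigr)$, and $w_k^Tv_k=1+\ell_{k-1}^{(w)}$ is one line from the $w$-recurrence. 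This is far lighter than unwinding $w_k^TA^TA w_k$ via the Lanczos relation, and it never goes near $V_k^TV_k=I$, which your matrix-form route through $M_k^TV_k^TA^TA V_kM_k$ would implicitly invoke to turn $V_k^TV_kT_k$ into $T_k$.
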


\begin{proof}
By a simple algebraic manipulation we obtain
\begin{equation*}
\left\Vert x-x_{k-1}\right\Vert _{A^{T}A}^{2}-\left\Vert x-x_{k}\right\Vert _{A^{T}A}^{2} = \left(x_{k}-x_{k-1}\right)^{T}A^{T}A\left(x-x_{k-1}\right) + \left(x_{k}-x_{k-1}\right)^{T}A^{T}A\left(x-x_{k}\right),
\end{equation*}
which is just a minor modification of the formula \eqref{eqn:LSQR_HS}.
Using the relations 
\[
A^{T}A(x-x_{k})=-\theta_{k+1}\phi_{k}v_{k+1},\quad x_{k}-x_{k-1}=\frac{\phi_{k}}{\rho_{k}}w_{k}
\]
we obtain 
\[
\left\Vert x-x_{k-1}\right\Vert _{A^{T}A}^{2}-\left\Vert x-x_{k}\right\Vert _{A^{T}A}^{2}=\phi_{k}^{2}\left( \left[-\frac{\theta_k \phi_{k-1}}{\rho_k\phi_k}\right]w_{k}^{T}v_{k}-\frac{\theta_{k+1}}{\rho_{k}}v_{k+1}^{T}w_{k}\right).
\]
Note that 
\begin{equation}
\label{eq:sknorm1}    
-\frac{\theta_k \phi_{k-1}}{\rho_k\phi_k} = 
-\frac{(s_{k-1} \alpha_k) (c_{k-1}\bar{\phi}_{k-1})}{\rho_k\phi_k}
=
\frac{\bar{\rho}_k \bar{\phi}_{k}}{\rho_k\phi_k}
=
1
\end{equation}
so that
\[
\left\Vert x-x_{k-1}\right\Vert _{A^{T}A}^{2}-\left\Vert x-x_{k}\right\Vert _{A^{T}A}^{2}=\phi_{k}^{2}\left(w_{k}^{T}v_{k}-\frac{\theta_{k+1}}{\rho_{k}}v_{k+1}^{T}w_{k}\right).
\]

Finally, from line~19 of \Cref{alg:LSQR},
\[
v_{k}^{T}w_{k}=
v_{k}^{T}\left(v_{k}+\frac{\theta_{k}}{\rho_{k-1}}w_{k-1}\right)=
1+\frac{\theta_{k}}{\rho_{k-1}}v_{k}^{T}w_{k-1}.
\]
\end{proof}

As a consequence, \eqref{eq:keyidentity} should hold numerically, if the local orthogonality terms \eqref{eq:localterms} are small (meaning that their magnitude is much less than one) and if \eqref{eq:mulvk1}  holds numerically (if the level of maximal attainable accuracy have not been reached yet).
Note that \eqref{eq:mulvk1} has been used  in the proof of Theorem~\ref{thm:LSQR}.
Hence, the problem of justification of the identity \eqref{eq:keyidentity} in finite-precision arithmetic is in this way reduced to the problem of bounding  local orthogonality between the computed vectors $v_{k+1}$ and $w_k$. 

To understand better the terms $\ell_k^{(w)}$, let us realize that LSQR computes in exact arithmetic the same approximations $x_k$ as CGLS. Therefore, 
\begin{equation}
\label{eq:sknorm2}
    s_k = A^T (b-Ax_k) = -\theta_{k+1} \phi_k v_{k+1},
\end{equation}
where $s_k = A^T r_k$ are computed in \Cref{alg:cgls}. By comparing  $x_k-x_{k-1}$ in both algorithms, we obtain
$$
    \cfa_{k-1} p_{k-1} = \frac{\phi_k}{\rho_k} w_k\,.
$$
Therefore, CGLS and LSQR
compute the same vectors, just scaled differently. In particular, 
$w_k$ is a multiple of $p_{k-1}$ and $v_{k+1}$ is a multiple of $s_k$. Both algorithms can be seen
as a variant of CG applied to the system of normal equations $A^TAx=A^Tb$.
Finally, realizing that $\cfa_{k-1}=\rho_k^{-2}$, see \eqref{eq:coeff}, we obtain
$$
    \ell_{k}^{(w)} =\frac{\theta_{k+1}}{\rho_{k}}v_{k+1}^{T}w_{k}
    = -\frac{s_k^Tp_{k-1}}{\rho^2_k\phi^2_k}
    = -\frac{s_k^Tp_{k-1}}{\Vert s_{k-1}\Vert^2},
$$
where the last equality follows from \eqref{eq:sknorm1} and \eqref{eq:sknorm2}, which gives $\| s_{k-1} \|^2 = \theta^2_{k} \phi^2_{k-1}$.
Therefore, the local orthogonality term $\ell_k^{(w)}$ 
can be seen as a counterpart of the term that was 
analyzed in detail for CG; see \cite[Theorem 9.1]{StTi02}.
Based on results of \cite{StTi02}
one can expect that the size of
$\ell_k^{(w)}$ can be bounded
by machine precision 
$\varepsilon$ multiplied by some factor that 
can depend on $\kappa(A^TA)$ (or its analog if $A$ is singular), dimension of the problem and the number of iterations. However, a proper rounding error analysis leading to the mentioned result is beyond the scope of this paper.
Below we check the validity of \eqref{eq:keyidentity} 
numerically, by plotting the relative error
\begin{equation}
\label{eq:relativeerrornumexp}
   \frac{\left| \big( \left\Vert x-x_{k-1}\right\Vert _{A^{T}A}^{2}-\left\Vert x-x_{k}\right\Vert _{A^{T}A}^{2} \big) - \phi_{k}^{2} \right| }{\left\Vert x-x_{k-1}\right\Vert _{A^{T}A}^{2}-\left\Vert x-x_{k}\right\Vert _{A^{T}A}^{2}}
\end{equation}
for the challenging (in terms of numerical stability) test case from~\cite{BjElSt1998} with the conditioning $\kappa(A^{T}A) = 10^{12}$; see Figure~\ref{fig:validity_of_9}. We observe that the relative error is (approximately) inversely proportional to the error~$\left\Vert x-x_{k}\right\Vert _{A^{T}A}$ and stays significantly below 1 until the maximal attainable accuracy is reached. 

\begin{figure}[htp]
    \centering
    \includegraphics[width=0.5\textwidth]{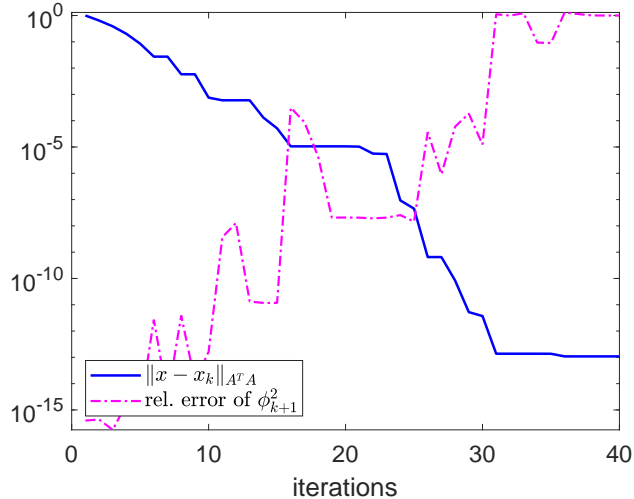}
    \caption{Test case from~\cite[p.~733]{BjElSt1998}: error norm and the relative error \eqref{eq:relativeerrornumexp}.}
    \label{fig:validity_of_9}
\end{figure}
\smallskip

In summary, \eqref{eq:keyidentity} results in 
\begin{equation} \label{eq:HSlsqr}
    \| x - x_{\ell} \|^2_{A^TA} =  \sum_{j=\ell}^{k}{\phi}_{j+1}^2 + \| x - x_{k+1} \|^2_{A^TA}
\end{equation}
yielding the error estimator 
\[
    \Delta^{\mathrm{LSQR}}_{\ell:k} \equiv \sum_{j=\ell}^{k}{\phi}_{j+1}^2 \approx \| x - x_{\ell} \|^2_{A^TA} = \| r_{\ell} \|^2 - \| r \|^2.
\]
The error estimator contains only scalars that are available during the LSQR iterations and it is very cheap to evaluate. It should be reliable also during finite-precision computations, if local orthogonality between $v_k$ and $w_{k-1}$ is well preserved and until the level of maximal attainable accuracy is reached. 

\section{Estimating the error in least-norm problems}
\label{sec:Craigestim}

Let us now consider a least-norm problem
\begin{equation}
\label{eq:lnproblem}
x = \argmin_{z\in \mathbb{R}^n} \| z \| \quad \mbox{subject to} \quad Az = b,
\end{equation}
where $A \in \mathbb{R}^{m \times n}$ and $b \in \mathbb{R}^{m}$, $b \in \mathcal{R}(A)$.
A method of choice for solving this (consistent) problem might be Craig's method that is described in the following section.

\subsection{Craig's method}
\label{sec:Craig}

The idea of the method is to write the solution $x$ of~\eqref{eq:lnproblem} as $x = A^T y$ for a proper vector~$y$ and compute the approximations $x_k = A^T y_k$, where $y_k$ is an approximation to~$y$ given by the CG method applied to the system
\begin{equation}
    \label{eq:normeq2}
    AA^T y = b,
\end{equation}
starting with some $y_0$. 
Denoting $r_0 = b - AA^Ty_0 = b - Ax_0$, the approximations $y_k$ satisfy
\[
    y_k \in y_0 + \mathcal{K}_k(AA^T,r_0), \qquad \| y - y_k\|_{AA^T} = \min_{z \in y_0 + \mathcal{K}_k(AA^T,r_0)} \|y - z \|_{AA^T}.
\]
A simple algebraic manipulation shows that
\begin{equation}
\label{eq:Craigxminim}
    \|y - y_k \|^2_{AA^T} = \| x - x_k \|^2,
\end{equation}
which means that Craig's method minimizes the Euclidean norm of the error of~$x_k$ over the affine space 
\[
    A^Ty_0 + A^T\mathcal{K}_k(AA^T,r_0) = x_0 + \mathcal{K}_k(A^TA,A^Tr_0).
\]
The above discussion can be applied also if the matrix~$AA^T$ is singular; see the discussion in Section~\ref{sec:recallestimate}.

Analogously to solving the normal equations~\eqref{eq:normeq}, there are several ways how the CG method for~\eqref{eq:normeq2} can be applied. A study of numerical stability of various implementations was presented in~\cite{BjElSt1998}.

The first algorithm was proposed by Craig in \cite{Cra54thesis,Cr1955}, developed originally as an algorithm for solving unsymmetric systems (see the review in MathSciNet written by Forsythe). 
Fadeev and Fadeeva developed almost the same algorithm~\cite[p. 504]{B:FaFa1963} with the difference that instead of direction vectors, the algorithm updates the vectors $g_{k}\equiv A^{T}p_{k}$. The resulting algorithm is called CGNE and is listed in \Cref{alg:CGNE}. The same algorithm, up to a different notation, is also given in \cite[Algorithm 2.1]{BjElSt1998} when setting $\mu=0$ therein.
\begin{algorithm}[htp]
\caption{CGNE}
\label{alg:CGNE}

\begin{algorithmic}[1]

\STATE \textbf{input} $A$, $b$, $x_{0}$

\STATE $r_{0}=b-Ax_{0}$

\STATE $p_{0}=A^{T}r_{0}$

\FOR{$k=0,1,2,\dots$}

\STATE $\cfa{}_{k}= {\|r_{k}\|^{2}}/{\|p_{k}\|^{2}}$

\STATE $x_{k+1}=x_{k}+\cfa{}_{k}p_{k}$

\STATE $r_{k+1}=r_{k}-\cfa{}_{k}Ap_{k}$

\STATE $\cfb_{k+1}={\|r_{k+1}\|^{2}}/{\|r_{k}\|^{2}}$

\STATE $p_{k+1}=A^{T}r_{k+1}+\cfb_{k+1}p_{k}$

\ENDFOR

\end{algorithmic}
\end{algorithm}

Paige \cite{Pa1974} and Paige and Saunders \cite{PaSa1982} developed another mathematically equivalent version of this algorithm using the Golub--Kahan bidiagonalization. The algorithm can also compute the approximations~$y_{k}$ to $AA^{T}y=b$. We present it in \Cref{alg:Craig} and call it CRAIG, to be consistent with the notation introduced by Paige and Saunders.

\begin{algorithm}[htp]
\caption{CRAIG}
\label{alg:Craig}

\begin{algorithmic}[1]

\STATE \textbf{input} $A$, $b$

\STATE $\zeta_{0}=-1$ 

\STATE $x_{0}=y_{0}=d_{0}=0$

\STATE $\beta_{1}u_{1}=b$
\hfill
\textcolor{gray}{$\beta_{i}$ are the normalization coefficients to have $\|u_{i}\| = 1$, $i = 1,2, \ldots$}

\STATE $\alpha_{1}v_{1}=A^{T}u_{1}$
\hfill \textcolor{gray}{$\alpha_{i}$ are the normalization coefficients to have $\|v_{i}\| = 1$, $i = 1,2, \ldots$}

\FOR{$k=1,2,\dots$}

\STATE $\zeta_{k}=-\frac{\beta_{k}}{\alpha_{k}}\zeta_{k-1}$

\STATE $x_{k}=x_{k-1}+\zeta_{k}v_{k}$

\STATE $d_{k}=\left(u_{k}-\beta_{k}d_{k-1}\right)/\alpha_{k}$

\STATE $y_{k}=y_{k-1}+\zeta_{k}d_{k}$
\hfill \textcolor{gray}{lines 9--10 can be removed if the approximation $y_k$ is not needed}

\STATE $\beta_{k+1}u_{k+1}=Av_{k}-\alpha_{k}u_{k}$

\STATE $\alpha_{k+1}v_{k+1}=A^{T}u_{k+1}-\beta_{k+1}v_{k}$

\ENDFOR

\end{algorithmic}
\end{algorithm}
We now present a way to estimate the error~$\| x - x_k \|^2$ in
both implementations of Craig's method. Similarly to \eqref{eqn:LSQR_HS}, by a simple algebraic manipulation 
we obtain
\begin{equation}\label{eqn:Craig_HS}
\left\Vert x-x_{k}\right\Vert^2-\left\Vert x-x_{k+1}\right\Vert^2 = \Vert x_{k+1}-x_{k}\Vert^2 + 2\left(x_{k+1}-x_{k}\right)^{T}\left(x-x_{k+1}\right).
\end{equation}

\subsection{Error estimate in CGNE}
\label{sec:CGNE}
Let us first realize that, in CGNE,
$$
\Vert x_{k+1}-x_{k}\Vert^2 = \cfa_k^2 \| p_k\|^2 = \cfa_k \| r_k\|^2,
$$
where we have used $\cfa_k \| p_k\|^2 = \| r_k\|^2$.

Consider formally the recurrence
for computing vectors $y_k$ such that $x_k=A^T y_k$, 
and the recurrence for computing the corresponding direction 
vectors $s_k$ such that $p_k=A^T s_k$,
\begin{eqnarray*}
    y_{k+1} &=& y_k + \cfa_{k} s_k,\\
     s_{k+1} &=& r_{k+1} + \cfb_{k+1} s_k, 
\end{eqnarray*}
with $y_0$ satisfying $x_0=A^T y_0$ and $s_0=r_0$.  Then the last term on the right hand side of \eqref{eqn:Craig_HS} corresponds to 
$$
\left(x_{k+1}-x_{k}\right)^{T}\left(x-x_{k+1}\right) = 
\left(y_{k+1}-y_{k}\right)^{T}A\left(x-x_{k+1}\right)
=
\cfa_{k} s_k^T r_{k+1},
$$
so that 
$$
\left\Vert x-x_{k}\right\Vert^2-\left\Vert x-x_{k+1}\right\Vert^2 = \cfa_k \| r_k\|^2 + 2\cfa_{k} s_k^T r_{k+1}.
$$
Using an analog of results from \cite{StTi02} for CG applied to $AA^Ty=b$ 
one can expect that the size of the term $s_k^T r_{k+1}$ should be negligible
in comparison to $ \Vert r_{k}\Vert^2$
and that the identity 
\[
    \| x - x_{k} \|^2 - \| x - x_{k+1} \|^2  
    = \cfa_k \| r_k\|^2 
\]
holds numerically, until the level of maximal attainable accuracy is reached.
Therefore,
for CGNE we obtain an analog to~\eqref{eq:errkkd} in the form,
\[
    \| x - x_{{\ell}} \|^2 = \sum_{j={\ell}}^{{k}} \cfa_j \| r_j \|^2 + \| x - x_{{k+1}} \|^2
\]
giving the error estimator
\[
    \Delta^{\mathrm{CGNE}}_{{\ell:k}} \equiv \sum_{j={\ell}}^{{k}} \cfa_j \| r_j \|^2 \approx \| x - x_{{\ell}} \|^2.
\]

\subsection{Error estimate in CRAIG}
\label{sec:CRAIG}
Considering the formula \eqref{eqn:Craig_HS} for the iteration index $k-1$, 
and using $x_{k}=A^Ty_k$ we obtain
$$
\left\Vert x-x_{k-1}\right\Vert^2-\left\Vert x-x_{k}\right\Vert^2 = \Vert x_{k}-x_{k-1}\Vert^2 + 2\left(y_{k}-y_{k-1}\right)^{T}A\left(x-x_{k}\right).
$$
Let us first realize that
$$
\Vert x_{k}-x_{k-1}\Vert^2 = \zeta_k^2 \| v_k\|^2 = \zeta_k^2.
$$
Using a similar technique as in Lemma~\ref{lem:LSQR} one can prove that 
$$
    A(x-x_k) = AA^T(y-y_k)= - \zeta_{k}\beta_{k+1}u_{k+1},
$$
so that, from line~10 of \Cref{alg:Craig},
$$
\left(y_{k}-y_{k-1}\right)^{T}A\left(x-x_{k}\right)
=
-\zeta_{k}^2 \beta_{k+1} d_k^T u_{k+1}.
$$
Therefore, assuming that the maximal level of accuracy has not been reached yet
and that local orthogonality between $d_k$ (direction vector in CG for \eqref{eq:normeq2}) and $u_{k+1}$ (scaled residual vector in CG for \eqref{eq:normeq2}) is well preserved,
the identity 
\[
    \| x - x_{k-1} \|^2 -\| x - x_{k} \|^2 = \zeta_k^2
\]
leading to 
\[
    \| x - x_{{\ell}} \|^2 = \sum_{j={\ell}}^{{k}} \zeta^2_{j+1}  + \| x - x_{{k+1}} \|^2
\]
holds numerically. A detailed rounding error analysis concerning the preservation of local orthogonality in CRAIG is beyond the scope of this paper. Finally, the error estimator for CRAIG has the form
\[
    \Delta^{\mathrm{CRAIG}}_{{\ell:k}} \equiv \sum_{j={\ell}}^{{k}} \zeta^2_{j+1} \approx \| x - x_{{\ell}} \|^2.
\]

\section{Error estimation in preconditioned algorithms}
\label{sec:precond}

In this section we present the preconditioned variants of the algorithms and derive the associated error estimates. 
We restrict ourselves to the class of split preconditioners for normal equations, i.e., the preconditioners formally transforming the systems \eqref{eq:normeq} and \eqref{eq:normeq2} to
\[
    \big( L^{-1}A^TAL^{-T} \big) \big( L^T x\big) = L^{-1}A^T b \quad \mathrm{and} \quad 
    \big( L^{-1}AA^TL^{-T} \big) \big( L^T y\big) = L^{-1} b,\ \mbox{respectively}.
\]
In \cite[Section~1]{BruMaTu14}, several preconditioning techniques for least-squares problems are discussed. Most of them can be represented as a split preconditioner for normal equations. In general, typical representatives of split preconditioners are incomplete factorizations of matrices~$A^TA$, or $AA^T$. Efficient codes for computing such factorizations without explicitly forming~$A^TA$, or~$AA^T$ are available; see, e.g., {\tt HSL\_MI35} from \cite{HSL}.

First we will discuss preconditioning for CGLS and LSQR, and then for CGNE and CRAIG.

\subsection{Preconditioned CGLS and LSQR}

Let a nonsingular matrix $L\in \mathbb{R}^{n\times n}$ be given. For least-squares problems, we consider the modification of the original problem \eqref{eq:lsproblem} in the form
\begin{equation}
    x =  \argmin_{z \in \mathbb{R}^n} \| b - \underbrace{AL^{-T}}_{\hat{A}}\underbrace{L^Tz}_{\phantom{\hat{I}}\hat{z}\phantom{\hat{I}}} \|
    \;=\;
    \argmin_{\hat{z} \in \mathbb{R}^n} \| b - \hat{A}\hat{z} \|,
\label{eq:prec_LS}
\end{equation}
leading to the corresponding system of normal equations
\begin{equation}
\underbrace{L^{-1} A^T}_{{\hat{A}}^T} \underbrace{AL^{-T}}_{\hat{A}}
\underbrace{L^Tx}_{\phantom{\hat{I}}\hat{x}\phantom{\hat{I}}}
= \underbrace{L^{-1} A^T}_{{\hat{A}}^T} b.
\label{eq:prec_normeq}
\end{equation}
Hence, $L$ can be seen as a split preconditioner for the matrix $A^TA$.
Let $\hat{x}_k$ be the $k$th CG approximate solution for the preconditioned system, and define by $x_k = L^{-T}\hat{x}_k$ an approximation to the solution~$x$ of the original problem \eqref{eq:lsproblem}. Recalling from~\eqref{eq:prec_normeq} that $\hat{x} = L^{T}x$ and $\hat{A} = AL^{-T}$, we have
\begin{equation}
    \| \hat{x} - \hat{x}_{k} \|^2_{\hat{A}^T\hat{A}} 
    = (\hat{x} - \hat{x}_{k})^T \hat{A}^T\hat{A} (\hat{x} - \hat{x}_{k})
    = (x - x_k)^T L L^{-1} A^TA L^{-T} L^T (x - x_k) = \| {x} - {x}_{k} \|^2_{{A}^T{A}}.
\label{eq:prec_LS_errors}
\end{equation}

The CGLS algorithm for solving~\eqref{eq:prec_LS}, based on \Cref{alg:cgls}, is given in \Cref{alg:form_pcgls}.
Simple algebraic manipulations show that the residual $r_k = b - Ax_k$ for $x_k = L^{-T}\hat{x}_k$ satisfies $r_k = \hat{r}_k$. This allows us to write a variant of preconditioned CGLS as in \Cref{alg:pcgls}, where the approximations~$x_k$ and the associated residuals~$r_k$ are explicitly computed.

Analogously to \Cref{sec:LSQR_CGLS}, we can show that the errors satisfy 
\begin{equation*}
    \| \hat{x} - \hat{x}_{k} \|^2_{\hat{A}^T\hat{A}} =  \sum_{j={\ell}}^{{k}} \hat{\cfa}_{j} \|\hat{s}_j\|^2 + \| \hat{x} - \hat{x}_{{k+1}} \|^2_{\hat{A}^T\hat{A}}.
\end{equation*}
Therefore we can estimate the error in preconditioned CGLS using the estimator
\[
    \Delta^{\mathrm{PCGLS}}_{{\ell:k}} \equiv \sum_{j={\ell}}^{{k}} \hat{\cfa}_{j} \|\hat{s}_j\|^2 \approx \| x - x_{{\ell}} \|^2_{A^TA},
\]
with the same favourable properties as $\Delta^{\mathrm{CGLS}}_{{\ell:k}}$.

The algorithm and the estimator can easily be modified also for the case when a preconditioner is not available in the factorized form. The \emph{factorization-free} variant of PCGLS can be found, e.g., in \cite[Algorithm~4]{RegSau20}.

\noindent
\begin{minipage}{0.46\textwidth}
\begin{algorithm}[H]
\caption{Formally preconditioned CGLS}
\label{alg:form_pcgls}

\begin{algorithmic}[1]

\STATE \textbf{input} $\hat{A}$, $\hat{b}$, $\hat{x}_{0}$

\STATE $\hat{r}_{0}=\hat{b}-\hat{A}\hat{x}_{0}$

\STATE $\hat{s}_{0}=\hat{p}_{0} = \hat{A}^{T}\hat{r}_{0}$

\FOR{$k=0,1,\dots$}

\STATE $\hat{q}_{k}=\hat{A}\hat{p}_{k}$
\STATE $\hat{\cfa}_{k} = \| \hat{s}_{k} \|^2 / \| \hat{q}_{k} \|^2$
\STATE $\hat{x}_{k+1}=\hat{x}_{k}+\hat{\cfa}_{k}\hat{p}_{k}$
\STATE $\hat{r}_{k+1}=\hat{r}_{k}-\hat{\cfa}_{k}\hat{q}_{k}$
\STATE $\hat{s}_{k+1}=\hat{A}^T\hat{r}_{k+1}$
\STATE $\hat{\cfb}_{k+1}=\| \hat{s}_{k+1} \|^2/\| \hat{s}_{k} \|^2$
\STATE $\hat{p}_{k+1}=\hat{s}_{k+1}+\hat{\cfb}_{k+1}\hat{p}_{k}$

\ENDFOR
\\[\baselineskip]

\end{algorithmic}
\end{algorithm}
\end{minipage}
\hfill
\begin{minipage}{0.46\textwidth}
\begin{algorithm}[H]
\caption{Preconditioned CGLS}
\label{alg:pcgls}

\begin{algorithmic}[1]

\STATE \textbf{input} $A$, $b$, $x_0$, $L$

\STATE $r_{0}= {b}-{A}{x}_{0}$

\STATE $\hat{s}_{0}=\hat{p}_{0} = L^{-1}{A}^T {r}_{0}$

\FOR{$k=0,1,\dots$}

\STATE $\hat{t}_{k}=L^{-T}\hat{p}_{k}$
\STATE $\hat{q}_{k}={A}t_{k}$
\STATE $\hat{\cfa}_{k} = \| \hat{s}_{k} \|^2 / \| \hat{q}_{k} \|^2$
\STATE ${x}_{k+1}={x}_{k}+\hat{\cfa}_{k}\hat{t}_{k}$
\STATE ${r}_{k+1}={r}_{k}-\hat{\cfa}_{k}\hat{q}_{k}$
\STATE $\hat{s}_{k+1}=L^{-1}{A}^T {r}_{k+1}$
\STATE $\hat{\cfb}_{k+1}=\| \hat{s}_{k+1} \|^2/\| \hat{s}_{k} \|^2$
\STATE $\hat{p}_{k+1}=\hat{s}_{k+1}+\hat{\cfb}_{k+1}\hat{p}_{k}$

\ENDFOR

\end{algorithmic}
\end{algorithm}
\end{minipage}

\bigskip

For LSQR we can proceed analogously to CGLS and apply LSQR directly to~\eqref{eq:prec_LS}. After stopping iterations with~$\hat{x}_k$ in hands, the approximation to the solution~$x$ of the original system is computed as $x_k = L^{-T}\hat{x}_k$. The resulting algorithm is given in \Cref{alg:form_pLSQR}.

From the definition of the estimator $\Delta^{\mathrm{LSQR}}_{{\ell:k}}$ (considered for \eqref{eq:prec_LS}) and \eqref{eq:prec_LS_errors}, we derive the estimator for preconditioned LSQR
\[
    \Delta^{\mathrm{PLSQR}}_{{\ell:k}} \equiv \sum_{j={\ell}}^{{k}}\hat{\phi}_{j+1}^{\,2} \approx \| \hat{x} - \hat{x}_{{\ell}} \|^2_{\hat{A}^T\hat{A}} = \| {x} - {x}_{{\ell}} \|^2_{{A}^T{A}}.
\]

\begin{algorithm}[htp]
\caption{Preconditioned LSQR}
\label{alg:form_pLSQR}

\begin{algorithmic}[1]

\STATE \textbf{input} $A$, $b$, $L$

\STATE $\hat{\beta}_{1}\hat{u}_{1}=b$
\hfill\textcolor{gray}{$\hat{\beta}_{i}$ are the normalization coefficients to have $\|\hat{u}_{i}\| = 1$, $i = 1,2, \ldots$}
\STATE $\hat{\alpha}_{1}\hat{v}_{1}=L^{-1}A^{T}\hat{u}_{1}$
\hfill \textcolor{gray}{$\hat{\alpha}_{i}$ are the normalization coefficients to have $\|\hat{v}_{i}\| = 1$, $i = 1,2, \ldots$}
\STATE $\hat{w}_1 = \hat{v}_1$
\STATE $\hat{x}_0$ = 0
\STATE $\hat{\bar{\phi}}_1 = \hat{\beta}_1$
\STATE $\hat{\bar{\rho}}_1 = \hat{\alpha}_1$

\FOR{$k=1,2,\dots$}

\STATE $\hat{\beta}_{k+1}\hat{u}_{k+1}=\hat{A}\hat{v}_{k}-\hat{\alpha}_{k}\hat{u}_{k}
=AL^{-T}\hat{v}_{k}-\hat{\alpha}_{k}\hat{u}_{k}$
\STATE $\hat{\alpha}_{k+1}\hat{v}_{k+1}=\hat{A}^{T}\hat{u}_{k+1}-\hat{\beta}_{k+1}\hat{v}_{k}
=L^{-1}{A}^{T}\hat{u}_{k+1}-\hat{\beta}_{k+1}\hat{v}_{k}$
\STATE $\hat{\rho}_k = (\hat{\bar{\rho}}_k^2 + \hat{\beta}^2_{k+1})^{1/2}$
\STATE $\hat{c}_k = \hat{\bar{\rho}}_k / \hat{\rho}_k$
\STATE $\hat{s}_k = \hat{\beta}_{k+1} / \hat{\rho}_{k}$
\STATE $\hat{\theta}_{k+1} = \hat{s}_{k} \hat{\alpha}_{k+1}$
\STATE $\hat{\bar{\rho}}_{k+1} = -\hat{c}_{k} \hat{\alpha}_{k+1}$
\STATE $\hat{\phi}_{k} = \hat{c}_{k} \hat{\bar{\phi}}_{k}$
\STATE $\hat{\bar{\phi}}_{k+1} = \hat{s}_{k} \hat{\bar{\phi}}_{k}$

\STATE $\hat{x}_{k}=\hat{x}_{k-1} + (\hat{\phi}_{k} / \hat{\rho}_{k} ) \hat{w}_{k}$
\STATE $\hat{w}_{k+1}=\hat{v}_{k+1} + (\hat{\theta}_{k+1} / \hat{\rho}_{k} ) \hat{w}_{k}$

\ENDFOR
\STATE $x_{k}=L^{-T}\hat{x}_k$
\end{algorithmic}
\end{algorithm}

Factorization-free variant of preconditioned LSQR can be found, e.g., in \cite[Algorithm~2]{ArrBetHar14}.

\subsection{Preconditioned CGNE and CRAIG}
Let a nonsingular $L\in \mathbb{R}^{m \times m}$ be given. We consider the following modification 
of the original problem \eqref{eq:lnproblem}
\[
    x=\argmin_{z\in\mathbb{R}^{n}} \| z \| \quad \mbox{subject to} \quad \underbrace{L^{-1}A}_{\hat{A}} z = \underbrace{L^{-1}b}_{\hat{b}}\,,
\]
and, as before, the approximate solutions $x_k$ can be obtained 
when applying the considered algorithms to the underlying preconditioned system 
\[
    \underbrace{L^{-1}A}_{\hat{A}} \underbrace{A^TL^{-T}}_{\hat{A}^T} \underbrace{L^T y}_{\hat{y}} = \underbrace{L^{-1}b}_{\hat{b}}.
\]
Therefore, $L$ can be seen as a split preconditioner for the matrix $AA^T$.
We note that no transformation of the computed approximation is needed to get an approximation to the solution~$x$. Analogously to \Cref{sec:Craig}, we can show that the preconditioned variants minimize the Euclidean norm of the error $\| x - x_k \|^2$, now over the affine space $x_0 + \mathcal{K}_k(L^{-1}A^TAL^{-T},A^TL^{-T}L^{-1}r_0)$.

\begin{algorithm}[bt]
\caption{Preconditioned CGNE}
\label{alg:pCGNE}

\begin{algorithmic}[1]

\STATE \textbf{input} ${A}$, ${b}$, ${x}_{0}$, $L$

\STATE $\hat{r}_{0}=L^{-1}({b}-{A}x_{0})$

\STATE $\hat{p}_{0}={A}^{T}L^{-T}\hat{r}_{0}$

\FOR{$k=0,1,2,\dots$}

\STATE $\hat{\cfa}{}_{k}= {\|\hat{r}_{k}\|^{2}}/{\|\hat{p}_{k}\|^{2}}$

\STATE $x_{k+1}=x_{k}+\hat{\cfa}{}_{k}\hat{p}_{k}$

\STATE $\hat{r}_{k+1}=\hat{r}_{k}-\hat{\cfa}{}_{k}L^{-1}{A}\hat{p}_{k}$

\STATE $\hat{\cfb}_{k+1}={\|\hat{r}_{k+1}\|^{2}}/{\|\hat{r}_{k}\|^{2}}$

\STATE $\hat{p}_{k+1}={A}^{T}L^{-T}\hat{r}_{k+1}+\hat{\cfb}_{k+1}\hat{p}_{k}$

\ENDFOR

\end{algorithmic}
\end{algorithm}
For CGNE (\Cref{alg:CGNE}) we obtain the preconditioned variant, \Cref{alg:pCGNE}.
Whenever the residuals $r_k = b - Ax_k$ are needed within the iterations, we can use the transformation $\hat{r}_k = L^{-1}r_k$ and replace lines~2 and~7 in \Cref{alg:pCGNE} by
\begin{align*}
    &2: {r}_{0} = {b}-{A}x_{0}; \quad \hat{r}_{0} = L^{-1}r_0 \\
    &7: {r}_{k+1}={r}_{k}-\hat{\cfa}{}_{k}{A}\hat{p}_{k}; \quad \hat{r}_{k} = L^{-1}r_k
\end{align*}

We can similarly precondition the GRAIG algorithm (\Cref{alg:Craig}). A version without computing the $y_{k}$ vectors is given in \Cref{alg:pCraig}.

\begin{algorithm}[ht]
\caption{Preconditioned CRAIG}
\label{alg:pCraig}

\begin{algorithmic}[1]

\STATE \textbf{input} $A$, $b$, $L$

\STATE $\hat{\zeta}_{0}=-1$

\STATE $x_{0}=0$

\STATE $\hat{\beta}_{1}\hat{u}_{1}=L^{-1}b$
\hfill \textcolor{gray}{$\hat{\beta}_{i}$ are the normalization coefficients to have $\|\hat{u}_{i}\| = 1$, $i = 1,2, \ldots$}

\STATE $\hat{\alpha}_{1}v_{1}=A^{T}L^{-T}\hat{u}_{1}$
\hfill \textcolor{gray}{$\hat{\alpha}_{i}$ are the normalization coefficients to have $\|\hat{v}_{i}\| = 1$, $i = 1,2, \ldots$}

\FOR{$k=1,2,\dots$}

\STATE $\hat{\zeta}_{k}=-\frac{\hat{\beta}_{k}}{\hat{\alpha}_{k}}\hat{\zeta}_{k-1}$

\STATE $x_{k}=x_{k-1}+\hat{\zeta}_{k}\hat{v}_{k}$

\STATE $\hat{\beta}_{k+1}\hat{u}_{k+1}=L^{-1}A\hat{v}_{k}-\hat{\alpha}_{k}\hat{u}_{k}$

\STATE $\hat{\alpha}_{k+1}\hat{v}_{k+1}=A^{T}L^{-T}\hat{u}_{k+1}-\hat{\beta}_{k+1}\hat{v}_{k}$

\ENDFOR

\end{algorithmic}
\end{algorithm}

Proceeding as in \Cref{sec:CGNE} and \Cref{sec:CRAIG}, we obtain the error estimators in PCGNE and PCRAIG,
\[
    \Delta^{\mathrm{PCGNE}}_{{\ell:k}} \equiv \sum_{j={\ell}}^{{k}} \hat{\cfa}_j \| \hat{r}_j \|^2 \approx \| x - x_{{\ell}} \|^2,
    \quad
     \Delta^{\mathrm{PCRAIG}}_{{\ell:k}} \equiv \sum_{j={\ell}}^{{k}} \hat{\zeta}^2_{j+1} \approx \| x - x_{{\ell}} \|^2.
\]

\section{Numerical experiments}
\label{sec:numexp}

For the numerical tests we consider several matrices and systems from the SuiteSparse\footnote{\url{https://sparse.tamu.edu}, \cite{SuiteSparse}} matrix collection, with the sizes

\medskip
\setlength{\tabcolsep}{2pt}
\begin{tabular}{lrcl}
\texttt{illc1033}*:& 1033 &$\times$& 320 \\
\texttt{illc1850}*:& 1850 &$\times$& 712 \\
\texttt{well1033}*:& 1033 &$\times$& 320 \\
\texttt{well1850}*:& 1850 &$\times$& 712 \\
\texttt{sls}: & 1\,748\,122 &$\times$& 62\,729 \\
\end{tabular}
\hfill
\begin{tabular}{lrcl}
\texttt{Delor64K}:& 64\,719 &$\times$& 1\,785\,345 \\
\texttt{Delor338K}:& 343\,236 &$\times$& 887\,058 \\
\texttt{flower\_7\_4}: & 27\,693 &$\times$& 67\,593 \\
\texttt{cat\_ears\_4\_4}: & 19\,020 &$\times$& 44\,448 \\
\texttt{lp\_pilot}: & 1441 &$\times$& 4860
\end{tabular}
\setlength{\tabcolsep}{6pt}
\medskip

\noindent
The matrices marked with an asterisk come together with the right-hand side~$b$. For the remaining problems, we generated~$b$ as follows (in MATLAB notation)
\begin{verbatim}
    x = ones(size(A,2),1);
    x(2:2:end) = -2;
    x(5:5:end) = 0;
    b_LN = A*x;
    b_LS = b_LN + randn(size(b_LN))*norm(b_LN);
\end{verbatim}
where $b_\mathrm{LN}$ is used for least-norm problems and $b_\mathrm{LS}$ for least-squares. The exact solution is then computed using MATLAB build-in function \verb+lsqminnorm+.

As in~\cite{MePaTi2021}, we plot the error quantity together with the (adaptive) lower bound and compare the adaptive value of delay~$k-\ell$ with the \emph{ideal} value, i.e.~the minimal delay ensuring the prescribed accuracy of the bound.

\subsection{Least-squares problems}

In Figures \ref{fig:illc1033}--\ref{fig:sls} we plot the results for our adaptive error estimate in least-squares problems solved by CGLS and LSQR. We can see that the estimate mostly follows the error very tightly with nearly the optimal delay, even in the cases with almost stagnation where the optimal delay is very large; cf.~\Cref{fig:illc1033}. We observe some underestimation in initial iterations for \texttt{illc1033} matrix but in later iterations, where one typically needs an estimate for stopping the solver, the delay is close to the ideal value. For the matrix \texttt{sls}, we note that the adaptively chosen value of delay is higher than needed.

\begin{figure}[htp]
    \centering
    \begin{minipage}{0.45\textwidth}
    \centering
    matrix \texttt{\detokenize{illc1033}}, CGLS\\
    \includegraphics[width=\textwidth]{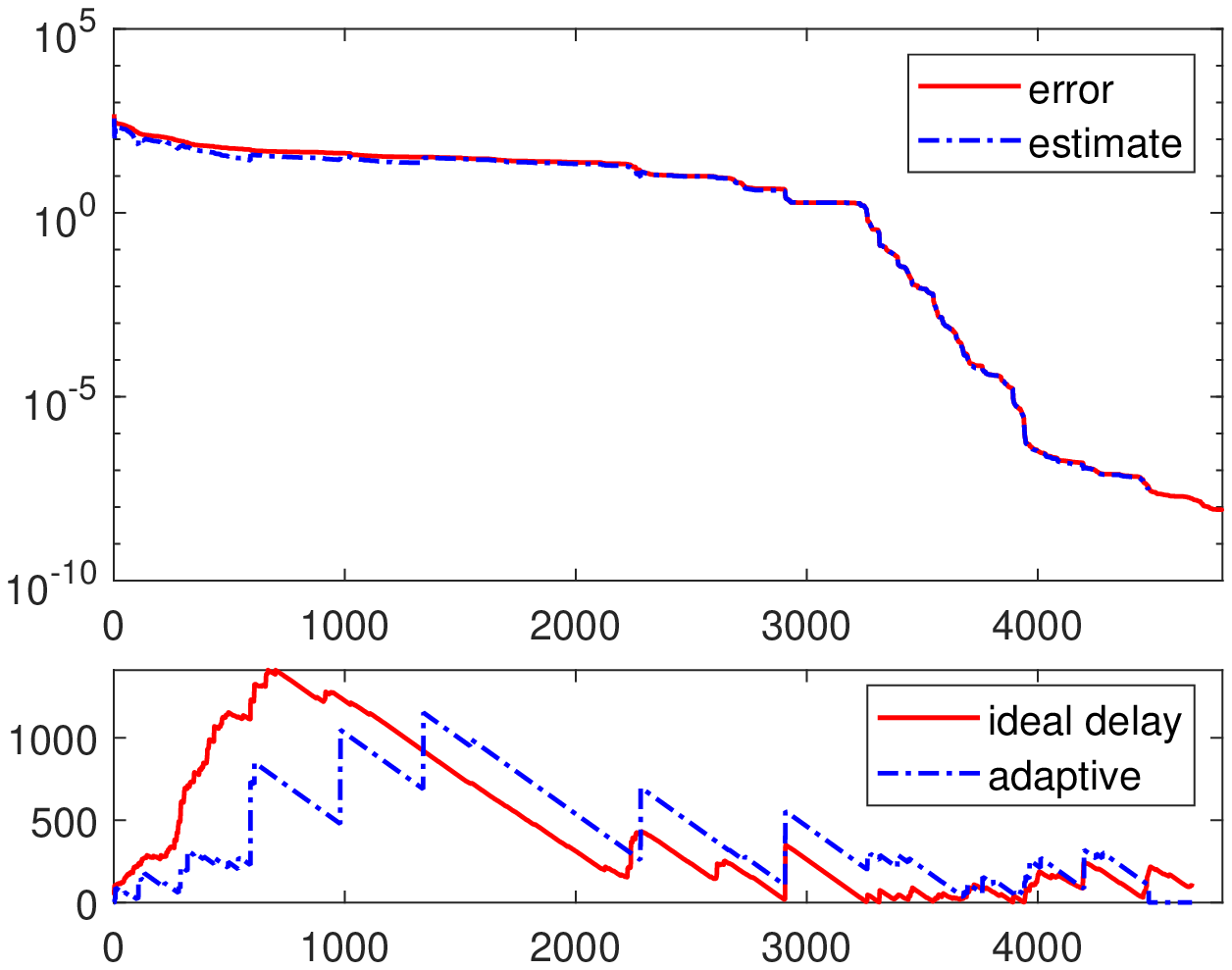}\\
    \textsf{iterations}
    \end{minipage}
    \hfill
    \begin{minipage}{0.45\textwidth}
    \centering
    matrix \texttt{\detokenize{illc1033}}, LSQR\\
    \includegraphics[width=\textwidth]{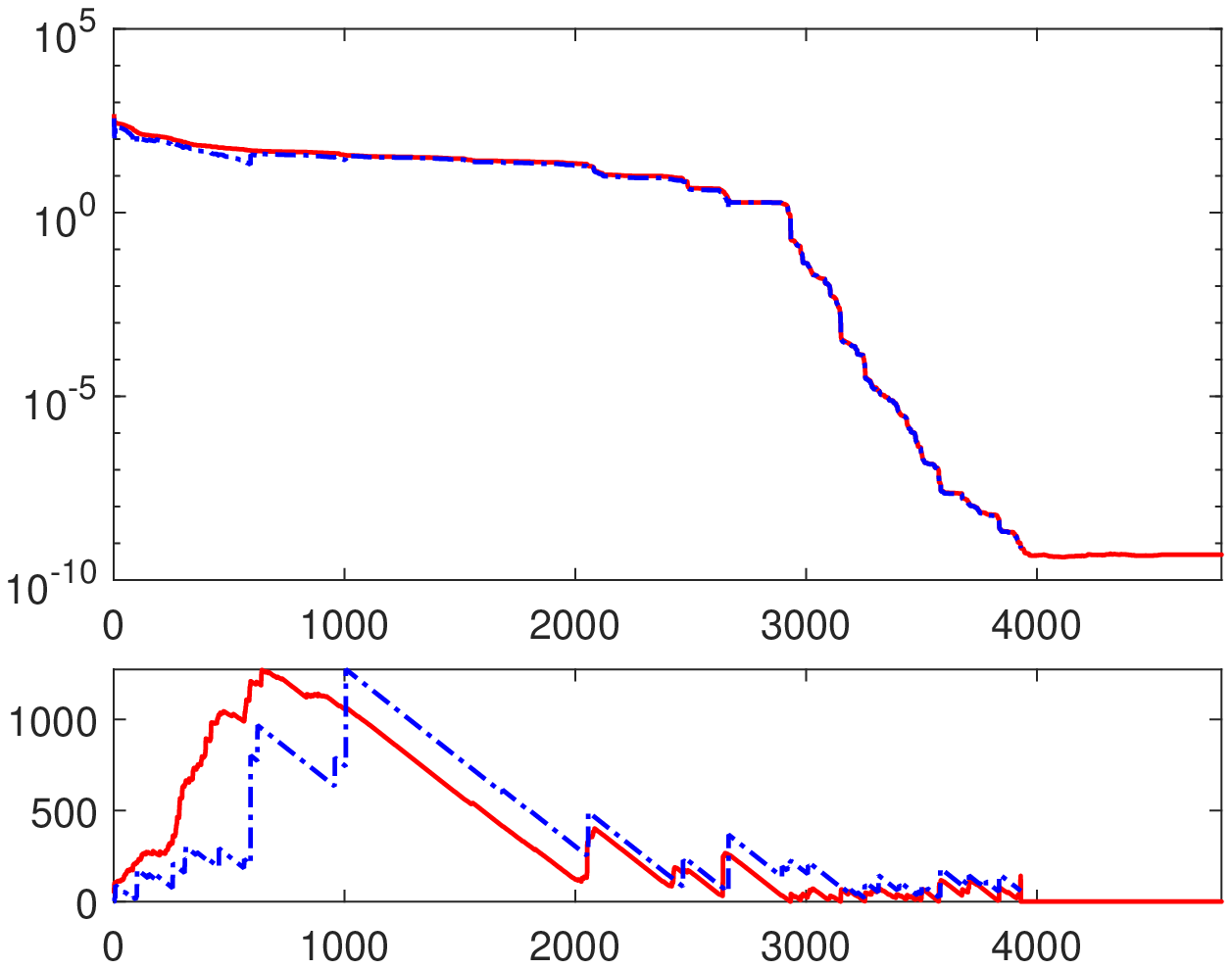}\\
    \textsf{iterations}
    \end{minipage}
    \caption{Matrix \texttt{\detokenize{illc1033}}, CGLS (left) and LSQR (right): error~$\| x-x_k \|_{A^TA}$ and adaptive error estimate (top), adaptively chosen delay $k-\ell$ and its ideal value (bottom)}
    \label{fig:illc1033}
\end{figure}

\begin{figure}[htp]
    \centering
    \begin{minipage}{0.45\textwidth}
    \centering
    matrix \texttt{\detokenize{illc1850}}, CGLS\\
    \includegraphics[width=\textwidth]{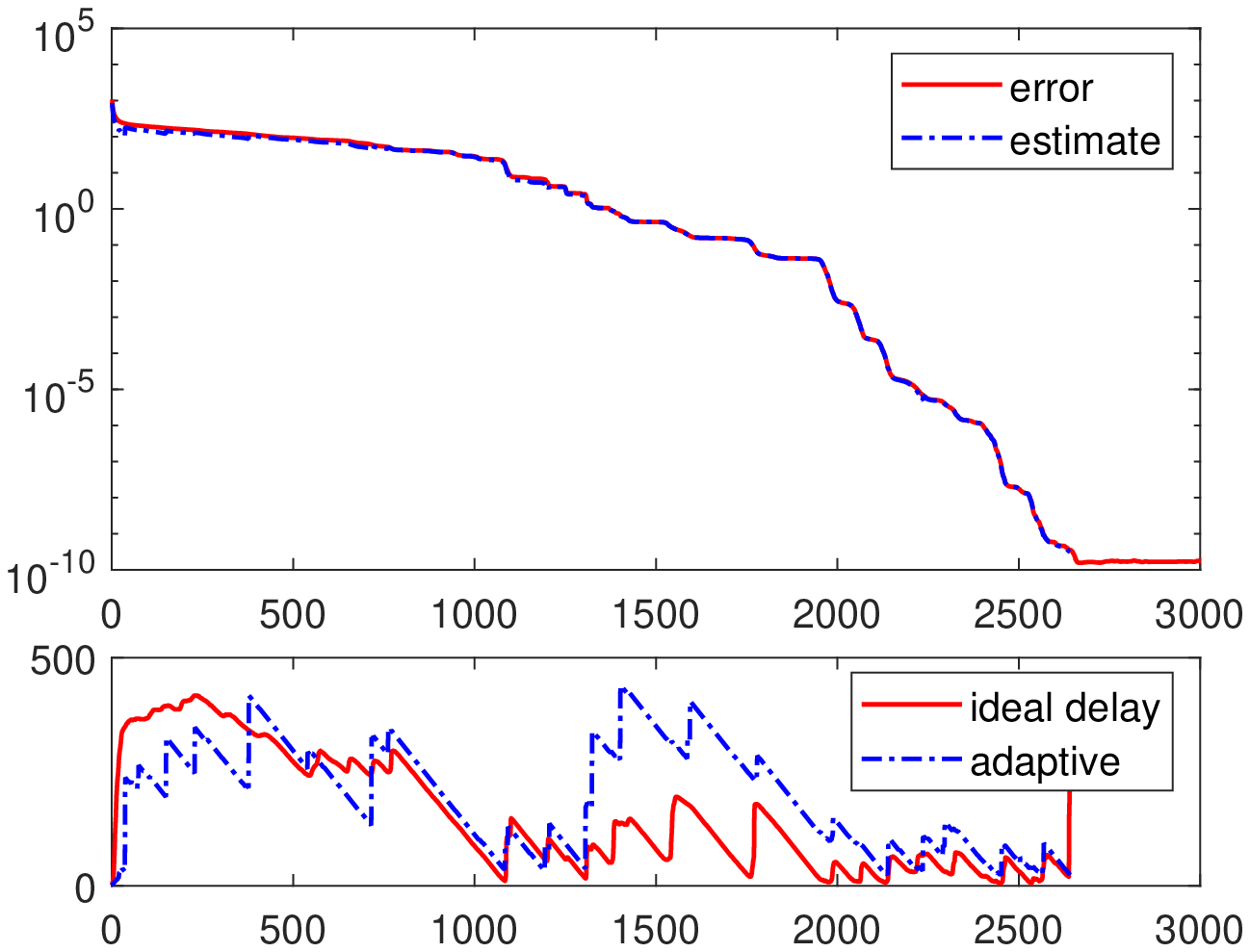}\\
    \textsf{iterations}
    \end{minipage}
    \hfill
    \begin{minipage}{0.45\textwidth}
    \centering
    matrix \texttt{\detokenize{illc1850}}, LSQR\\
    \includegraphics[width=\textwidth]{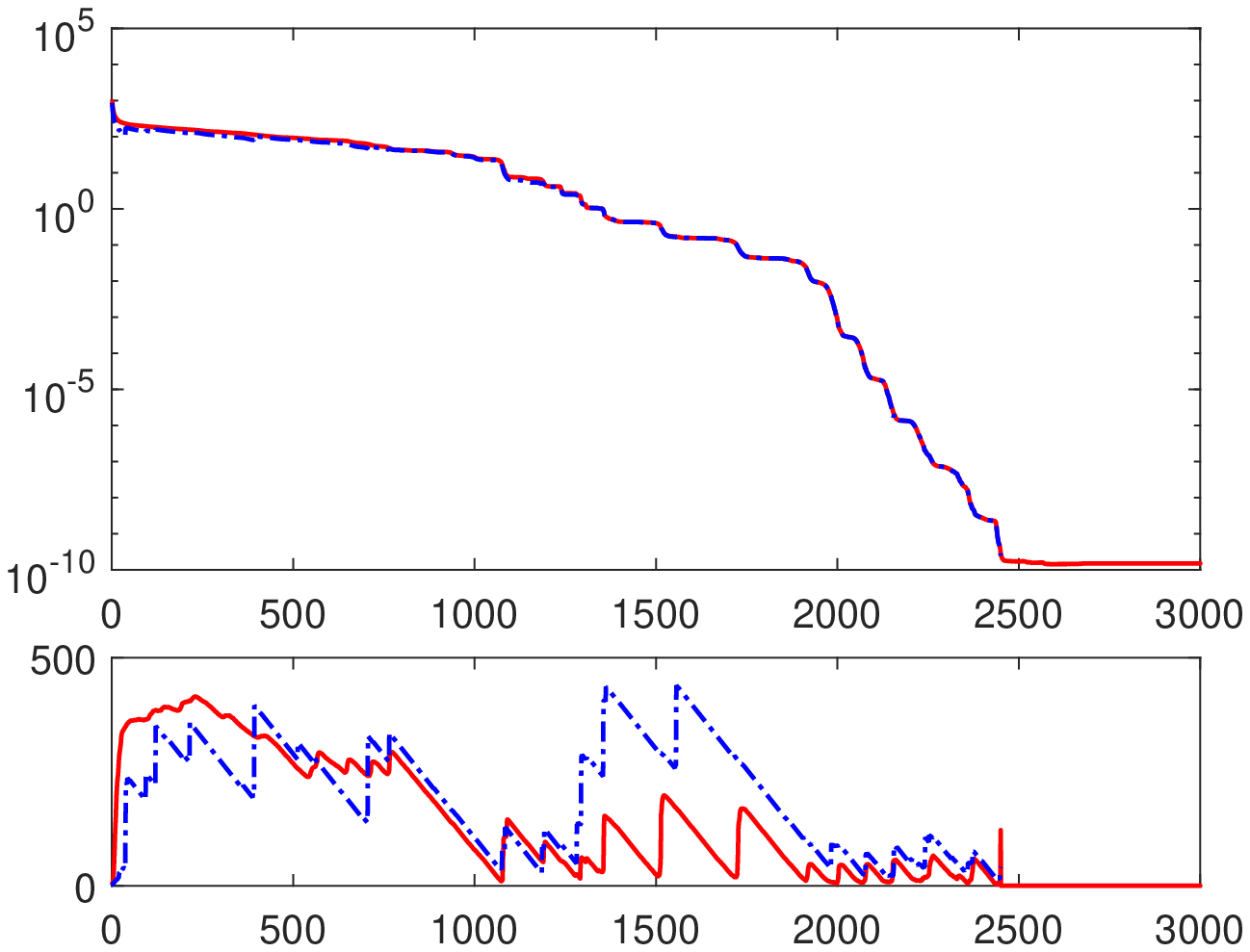}\\
    \textsf{iterations}
    \end{minipage}
    \caption{Matrix \texttt{\detokenize{illc1850}}, CGLS (left) and LSQR (right): error~$\| x-x_k \|_{A^TA}$ and adaptive error estimate (top), adaptively chosen delay $k-\ell$ and its ideal value (bottom)}
    \label{fig:illc1850}
\end{figure}

\begin{figure}[htp]
    \centering
    \begin{minipage}{0.45\textwidth}
    \centering
    matrix \texttt{\detokenize{well1033}}, CGLS\\
    \includegraphics[width=\textwidth]{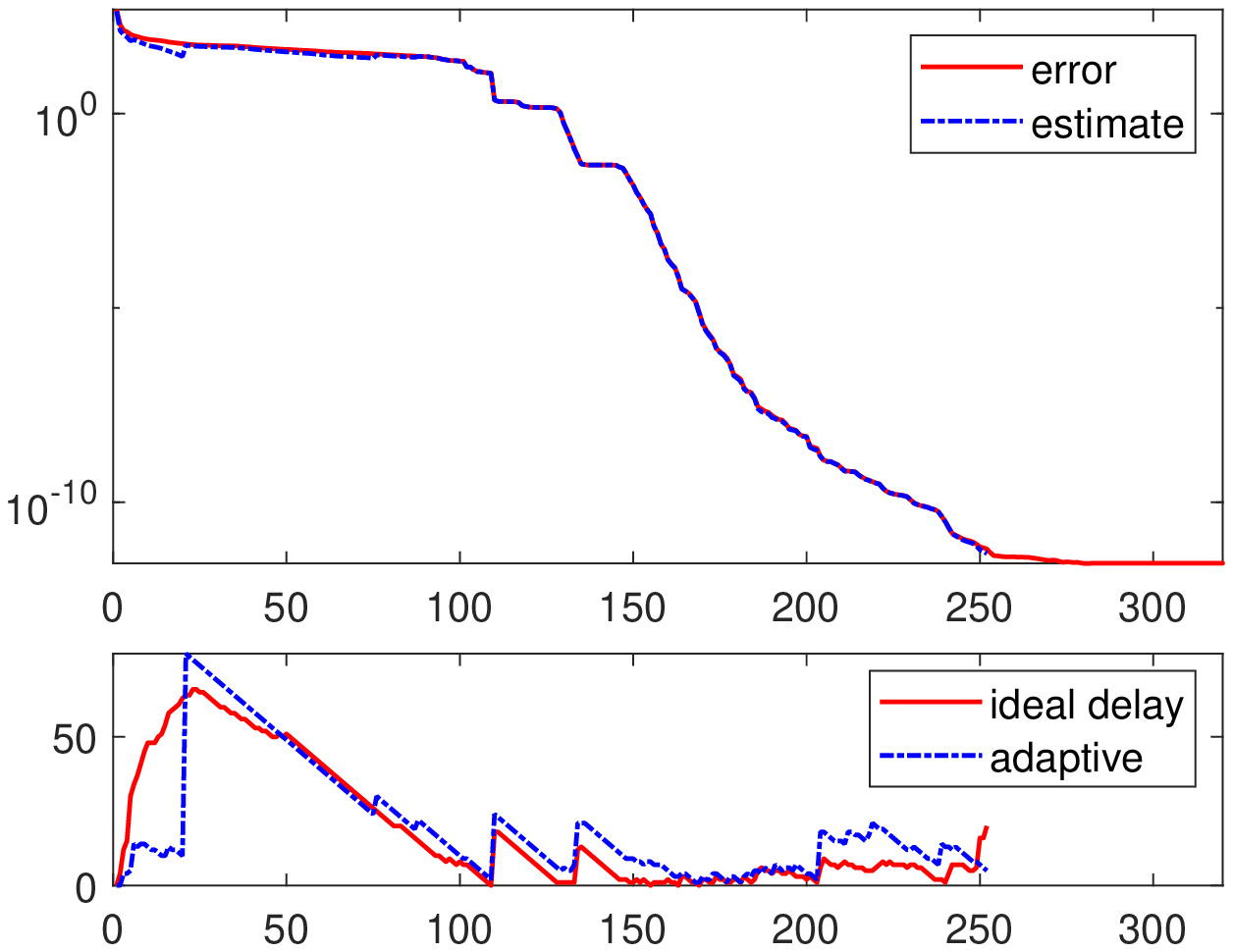}\\
    \textsf{iterations}
    \end{minipage}
    \hfill
    \begin{minipage}{0.45\textwidth}
    \centering
    matrix \texttt{\detokenize{well1033}}, LSQR\\
    \includegraphics[width=\textwidth]{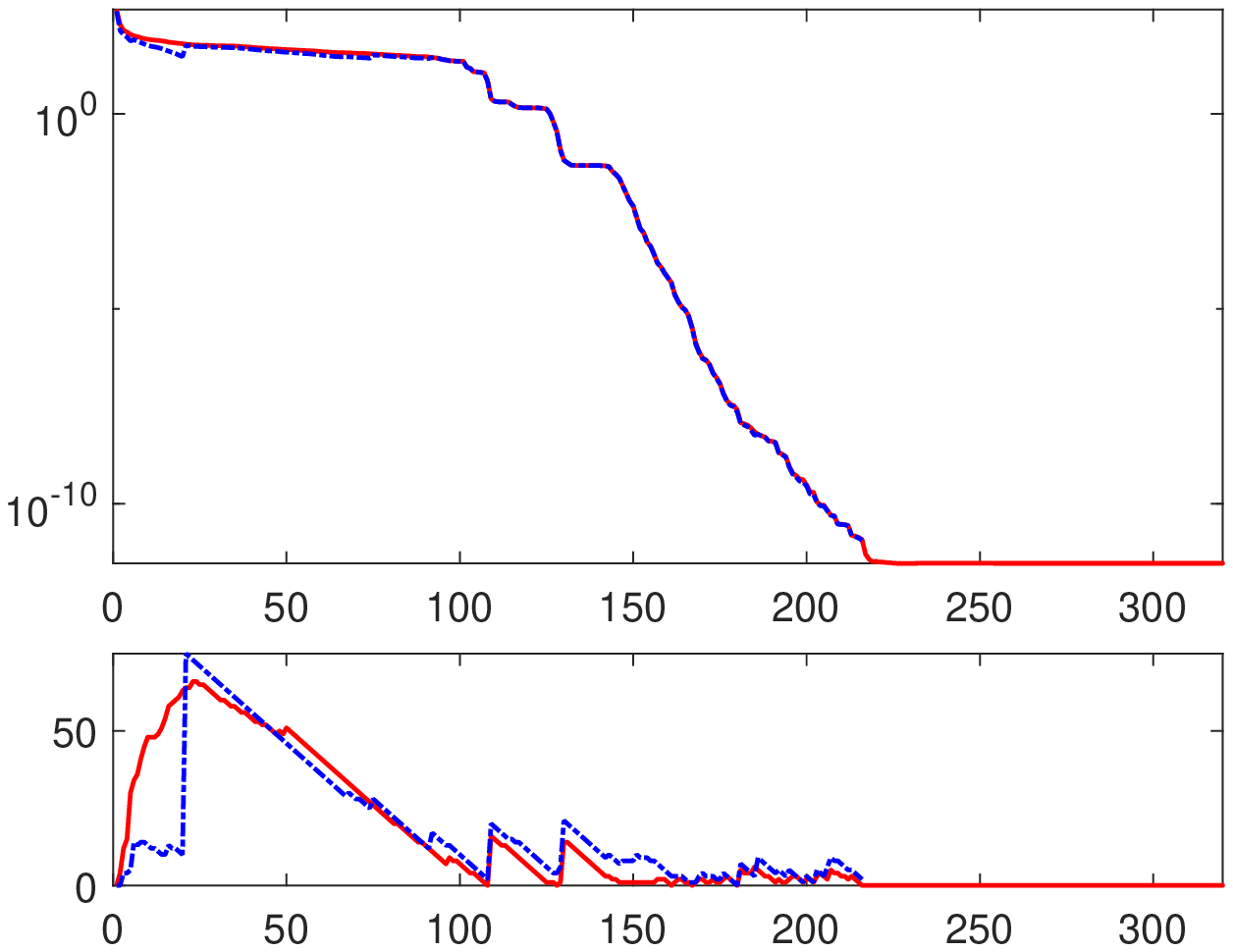}\\
    \textsf{iterations}
    \end{minipage}
    \caption{Matrix \texttt{\detokenize{well1033}}, CGLS (left) and LSQR (right): error~$\| x-x_k \|_{A^TA}$ and adaptive error estimate (top), adaptively chosen delay $k-\ell$ and its ideal value (bottom)}
    \label{fig:well1033}
\end{figure}

\begin{figure}[htp]
    \centering
    \begin{minipage}{0.45\textwidth}
    \centering
    matrix \texttt{\detokenize{well1850}}, CGLS\\
    \includegraphics[width=\textwidth]{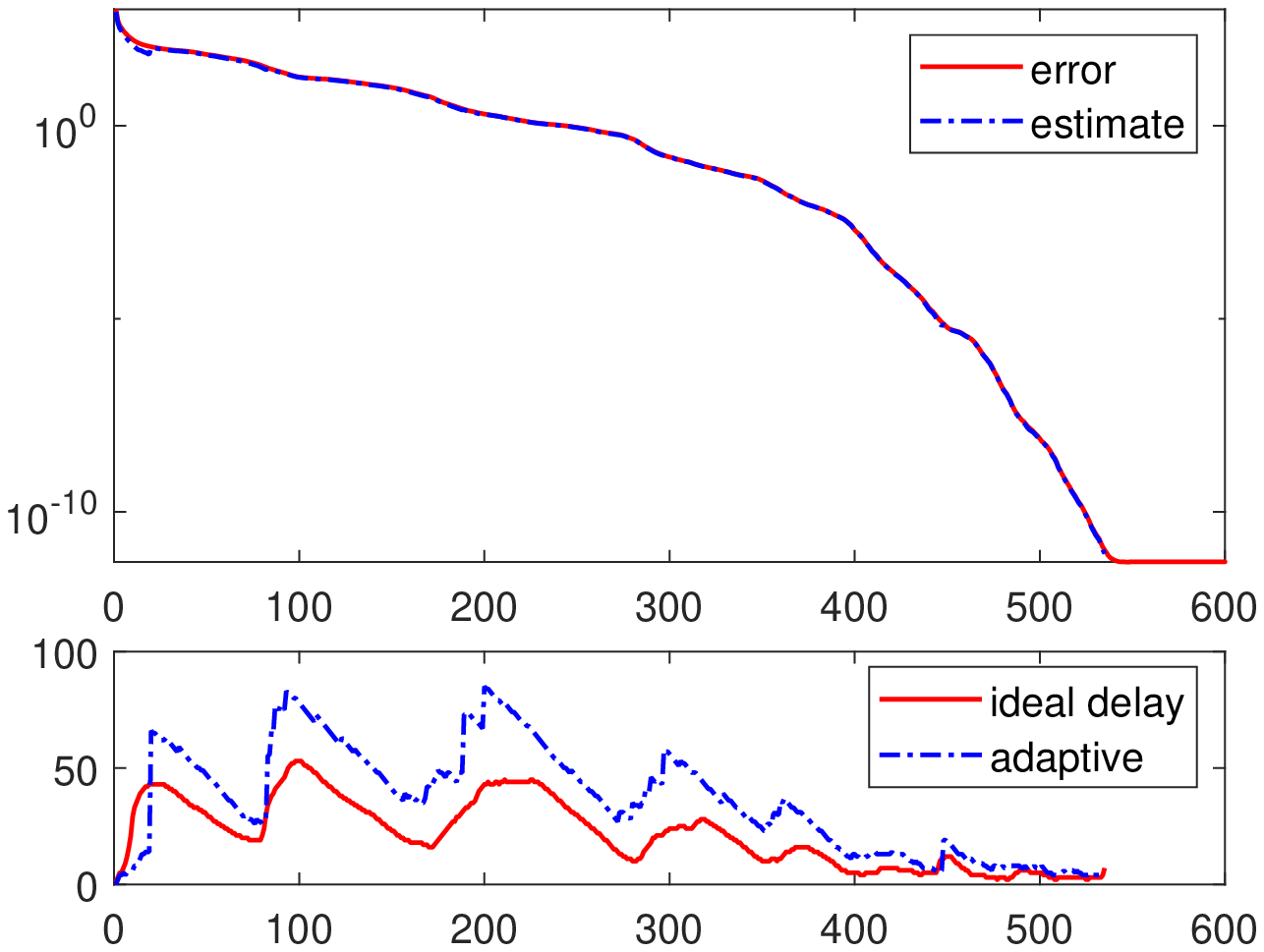}\\
    \textsf{iterations}
    \end{minipage}
    \hfill
    \begin{minipage}{0.45\textwidth}
    \centering
    matrix \texttt{\detokenize{well1850}}, LSQR\\
    \includegraphics[width=\textwidth]{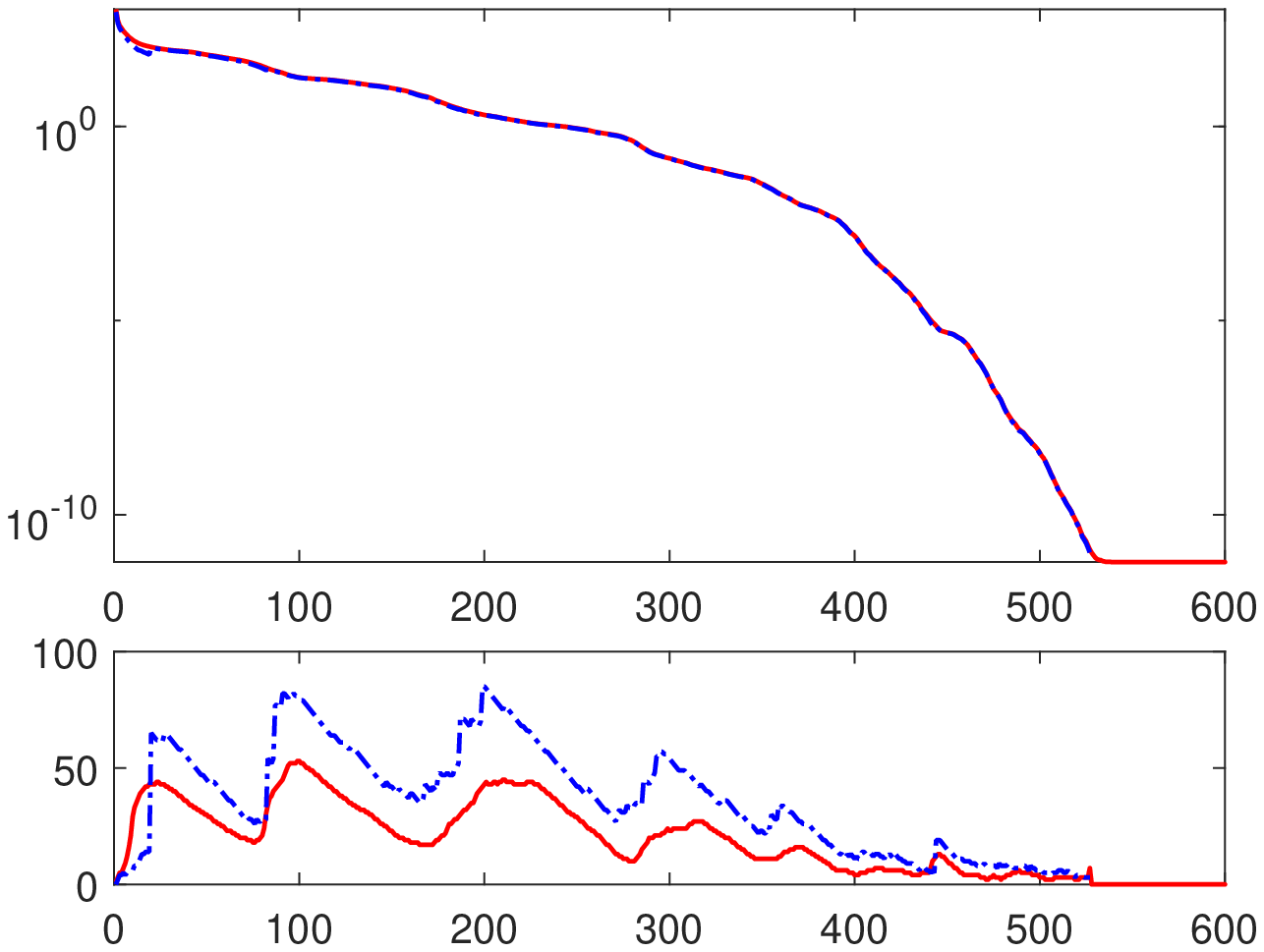}\\
    \textsf{iterations}
    \end{minipage}
    \caption{Matrix \texttt{\detokenize{well1850}}, CGLS (left) and LSQR (right): error~$\| x-x_k \|_{A^TA}$ and adaptive error estimate (top), adaptively chosen delay $k-\ell$ and its ideal value (bottom)}
    \label{fig:well1850}
\end{figure}

\begin{figure}[htp]
    \centering
    \begin{minipage}{0.45\textwidth}
    \centering
    matrix \texttt{\detokenize{sls}}, CGLS\\
    \includegraphics[width=\textwidth]{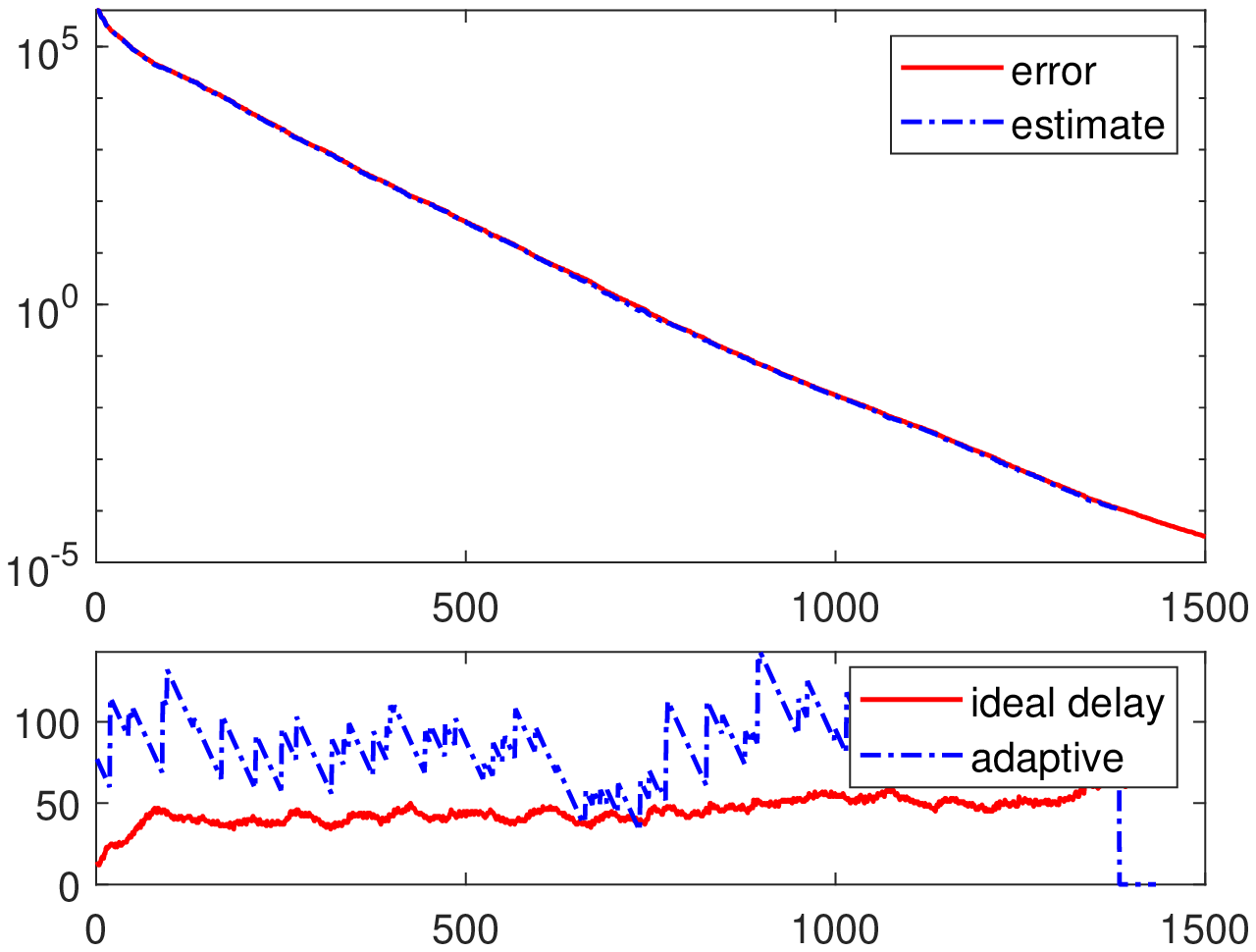}\\
    \textsf{iterations}
    \end{minipage}
    \hfill
    \begin{minipage}{0.45\textwidth}
    \centering
    matrix \texttt{\detokenize{sls}}, LSQR\\
    \includegraphics[width=\textwidth]{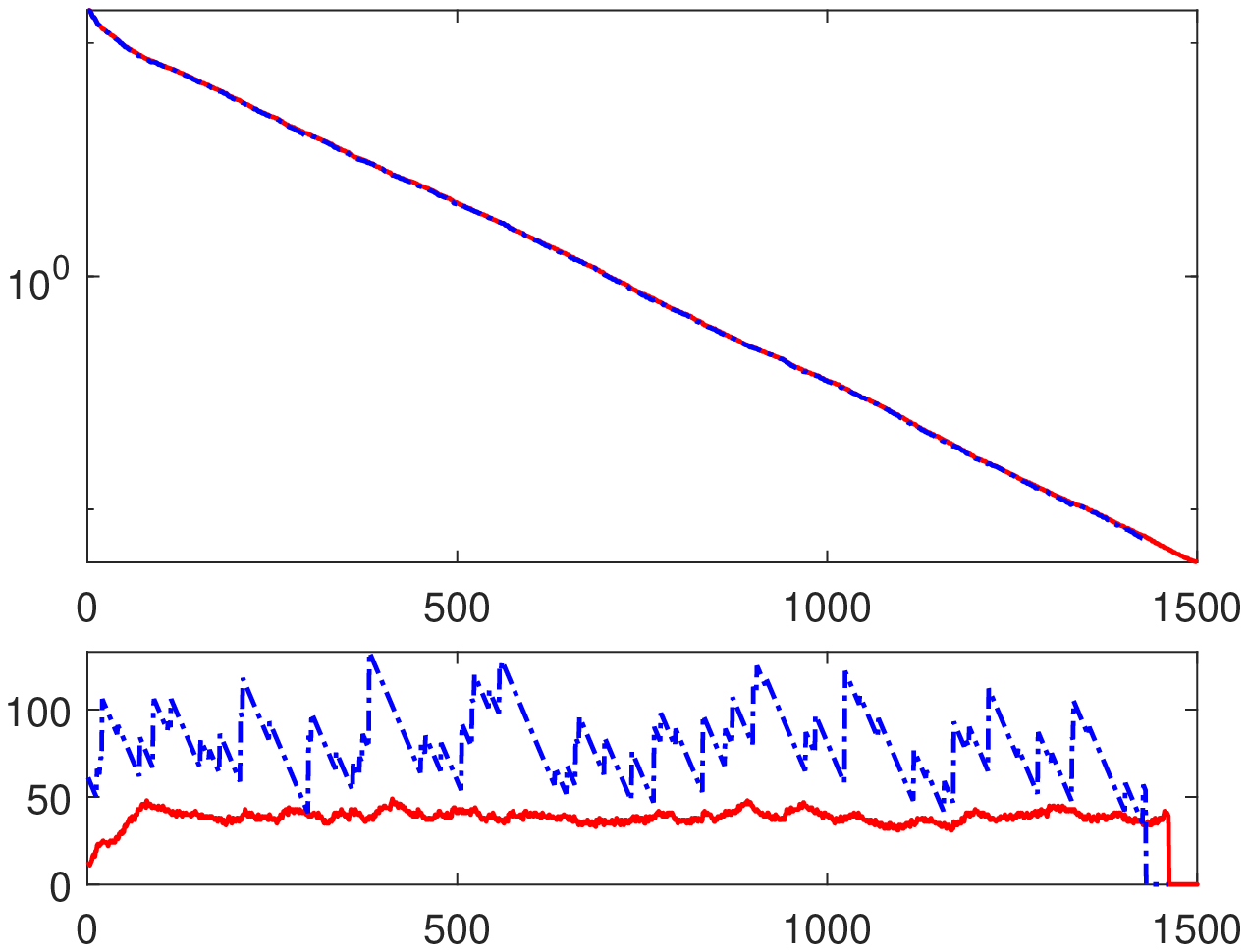}\\
    \textsf{iterations}
    \end{minipage}
    \caption{Matrix \texttt{\detokenize{sls}}, CGLS (left) and LSQR (right): error~$\| x-x_k \|_{A^TA}$ and adaptive error estimate (top), adaptively chosen delay $k-\ell$ and its ideal value (bottom)}
    \label{fig:sls}
\end{figure}

\FloatBarrier

\subsection{Least-norm problems}

The results for least-norm problems solved by CGNE and CRAIG are given in Figures \ref{fig:Delor64K}--\ref{fig:lp_pilot}. Here we observe very satisfactory behaviour with slightly higher delay for \texttt{Delor338K} and \texttt{lp\_pilot}; nevertheless, the adaptively chosen delay nicely follows the increases and decreases of the ideal value.

\begin{figure}[htp]
    \centering
    \begin{minipage}{0.45\textwidth}
    \centering
    matrix \texttt{\detokenize{Delor64K}}, CGNE\\
    \includegraphics[width=\textwidth]{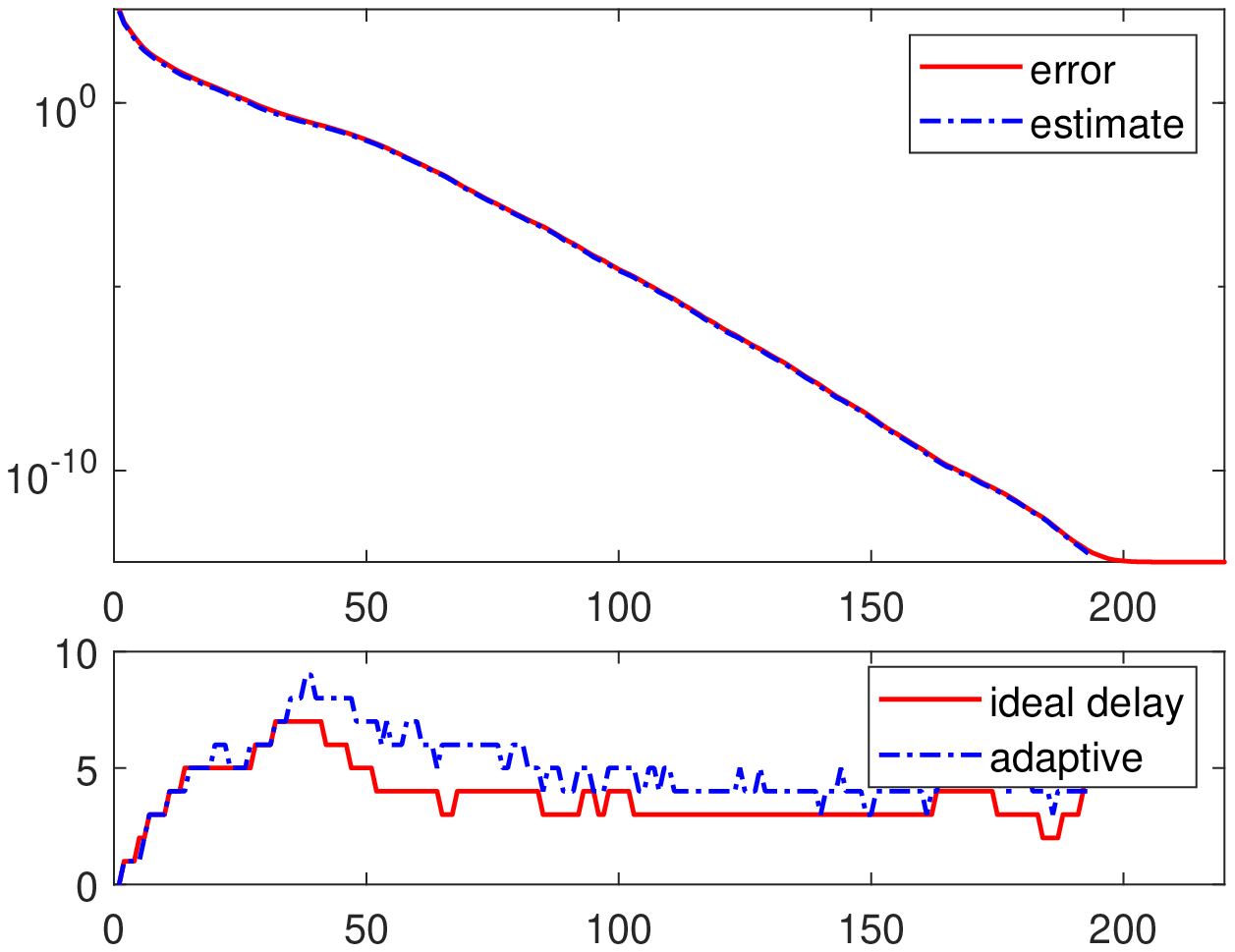}\\
    \textsf{iterations}
    \end{minipage}
    \hfill
    \begin{minipage}{0.45\textwidth}
    \centering
    matrix \texttt{\detokenize{Delor64K}}, CRAIG\\
    \includegraphics[width=\textwidth]{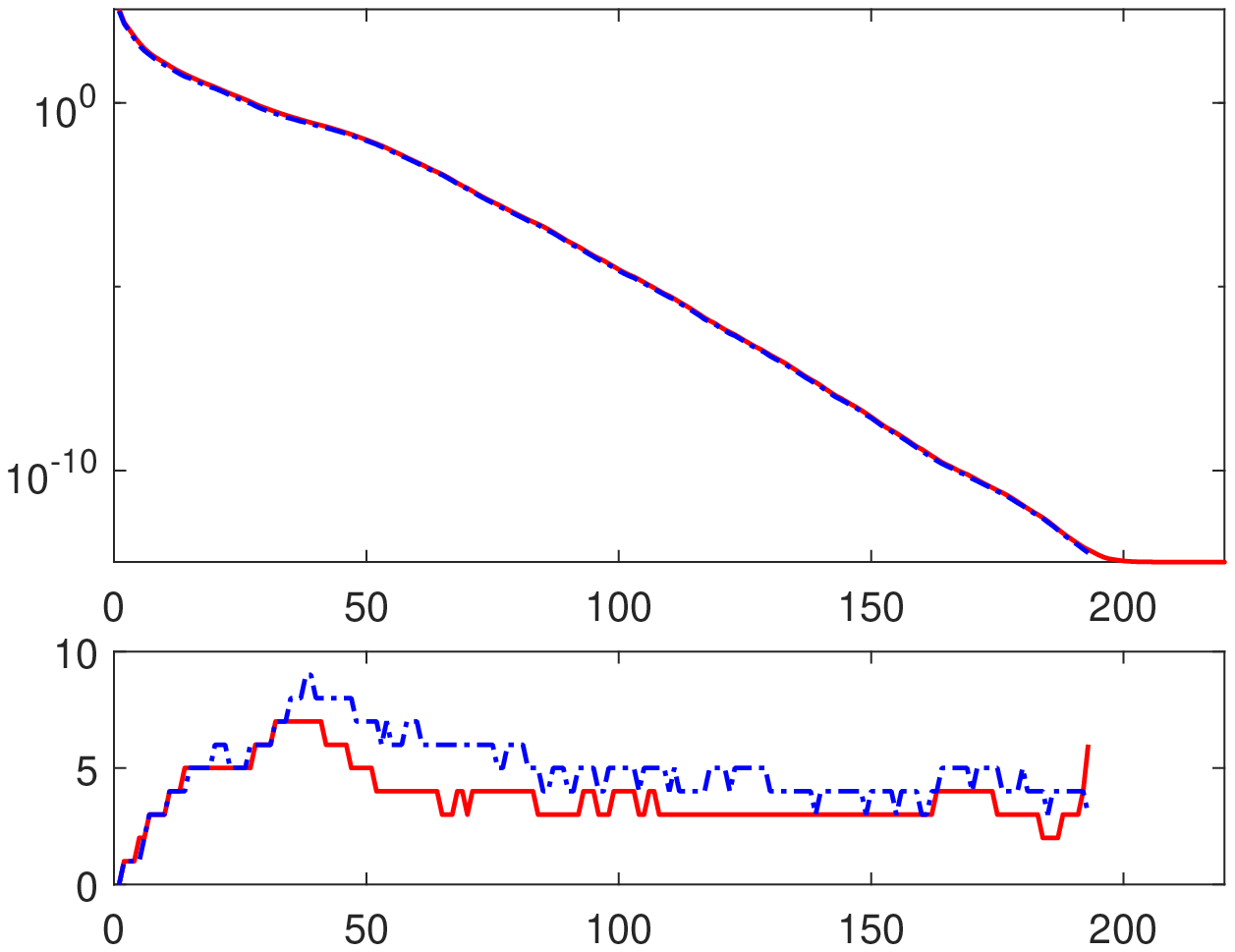}\\
    \textsf{iterations}
    \end{minipage}
    \caption{Matrix \texttt{\detokenize{Delor64K}}, CGNE (left) and CRAIG (right): error~$\| x-x_k \|$ and adaptive error estimate (top), adaptively chosen delay $k-\ell$ and its ideal value (bottom)}
    \label{fig:Delor64K}
\end{figure}

\begin{figure}[htp]
    \centering
    \begin{minipage}{0.45\textwidth}
    \centering
    matrix \texttt{\detokenize{Delor338K}}, CGNE\\
    \includegraphics[width=\textwidth]{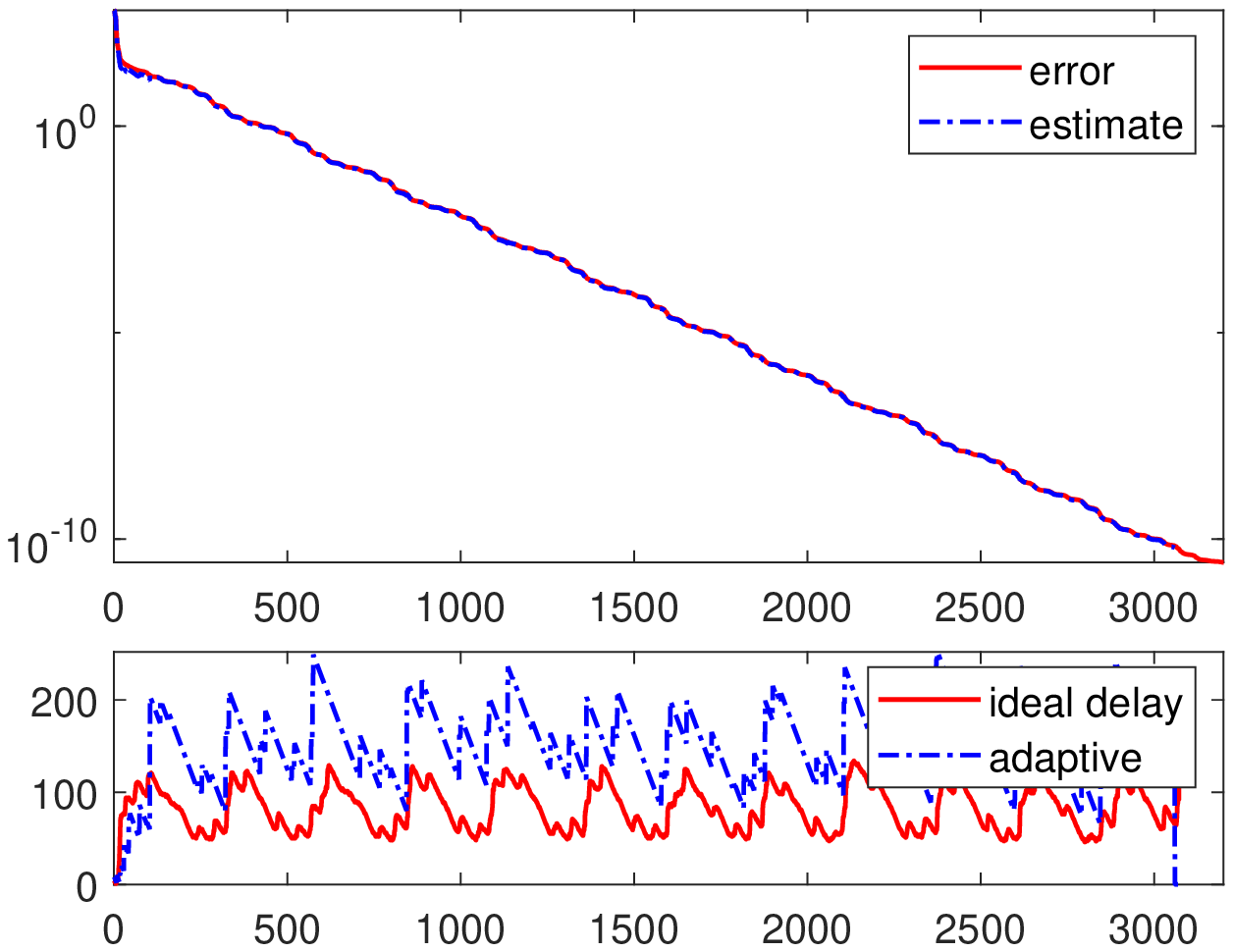}\\
    \textsf{iterations}
    \end{minipage}
    \hfill
    \begin{minipage}{0.45\textwidth}
    \centering
    matrix \texttt{\detokenize{Delor338K}}, CRAIG\\
    \includegraphics[width=\textwidth]{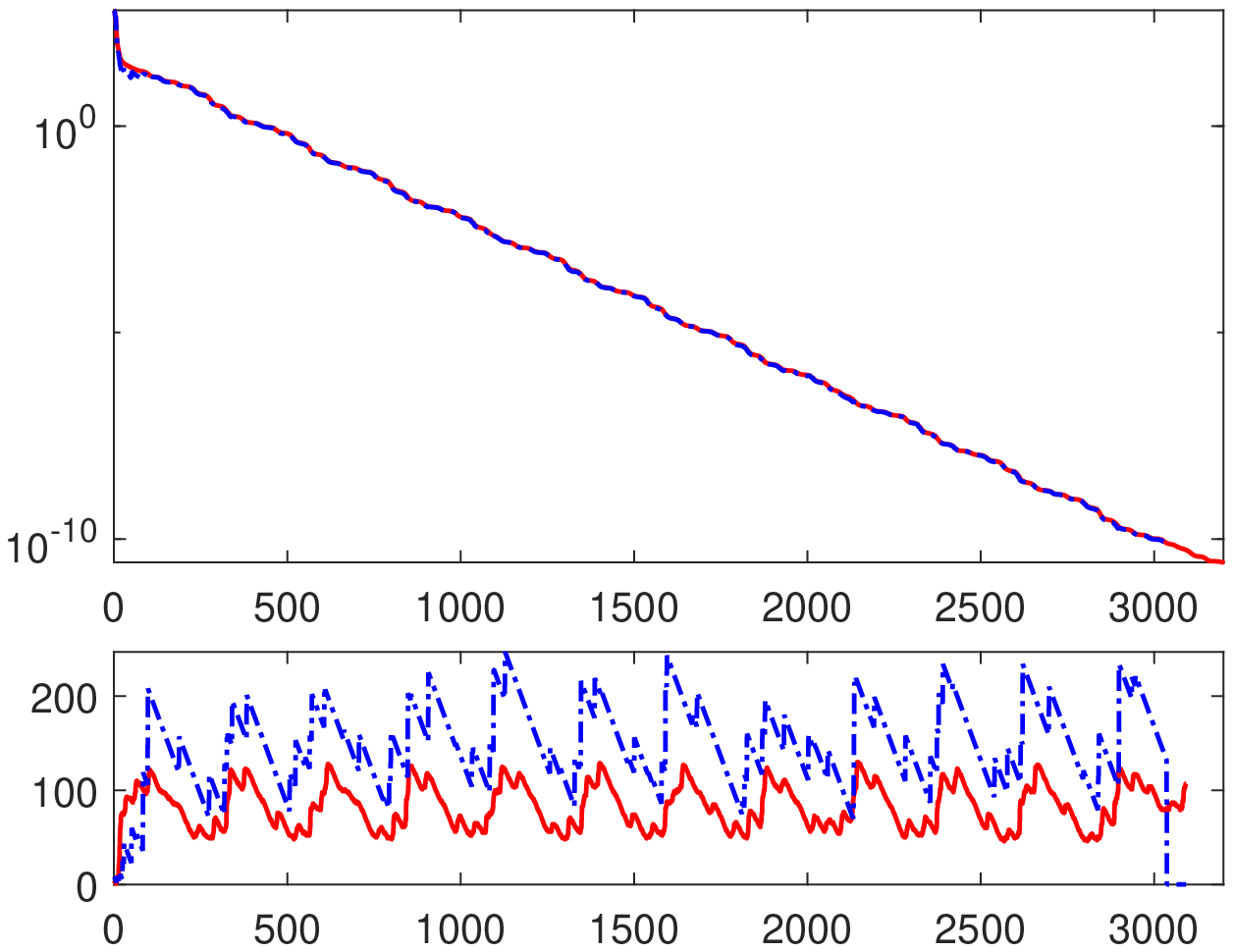}\\
    \textsf{iterations}
    \end{minipage}
    \caption{Matrix \texttt{\detokenize{Delor338K}}, CGNE (left) and CRAIG (right): error~$\| x-x_k \|$ and adaptive error estimate (top), adaptively chosen delay $k-\ell$ and its ideal value (bottom)}
    \label{fig:Delor338K}
\end{figure}

\begin{figure}[htp]
    \centering
    \begin{minipage}{0.45\textwidth}
    \centering
    matrix \texttt{\detokenize{flower_7_4}}, CGNE\\
    \includegraphics[width=\textwidth]{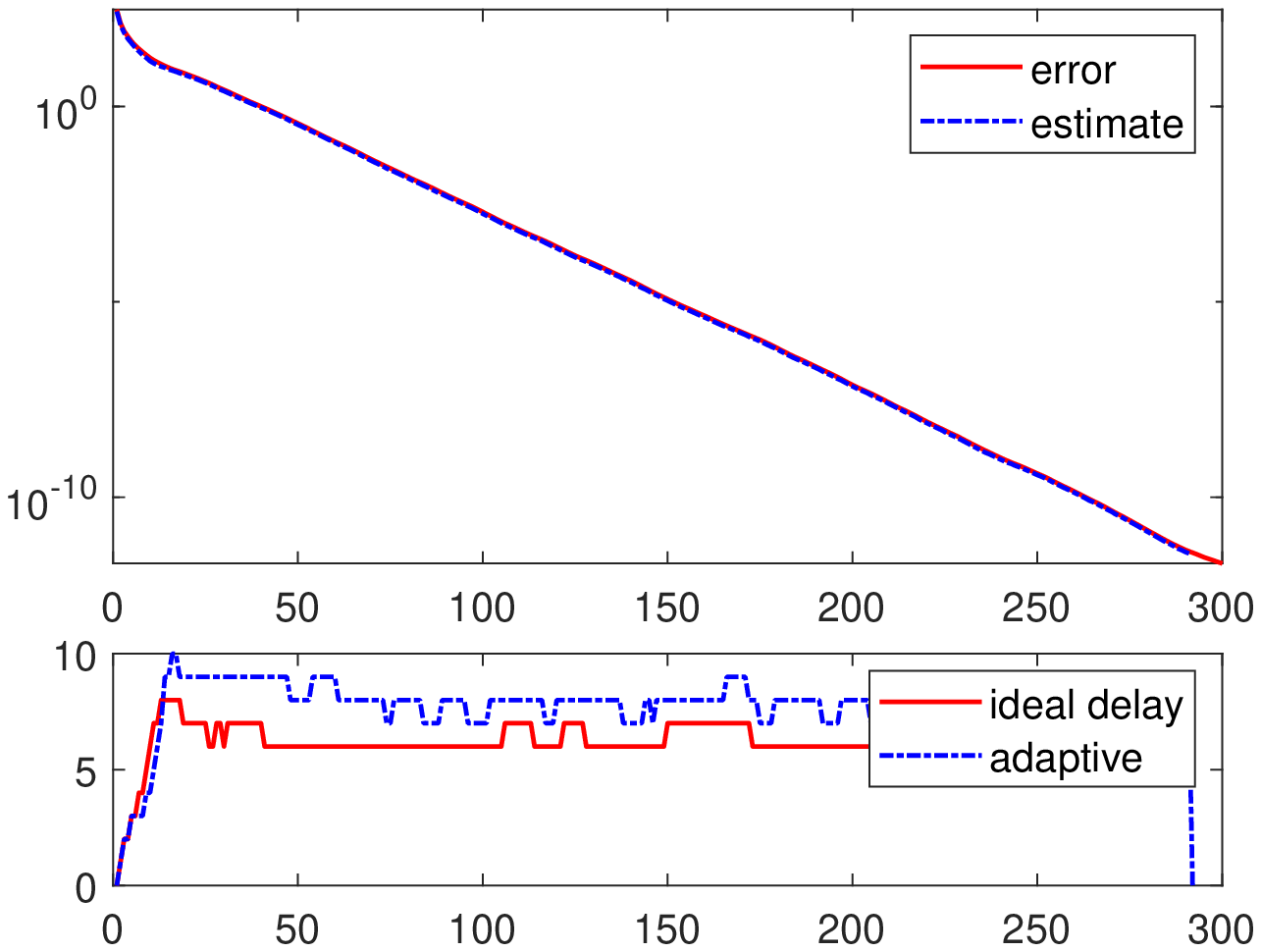}\\
    \textsf{iterations}
    \end{minipage}
    \hfill
    \begin{minipage}{0.45\textwidth}
    \centering
    matrix \texttt{\detokenize{flower_7_4}}, CRAIG\\
    \includegraphics[width=\textwidth]{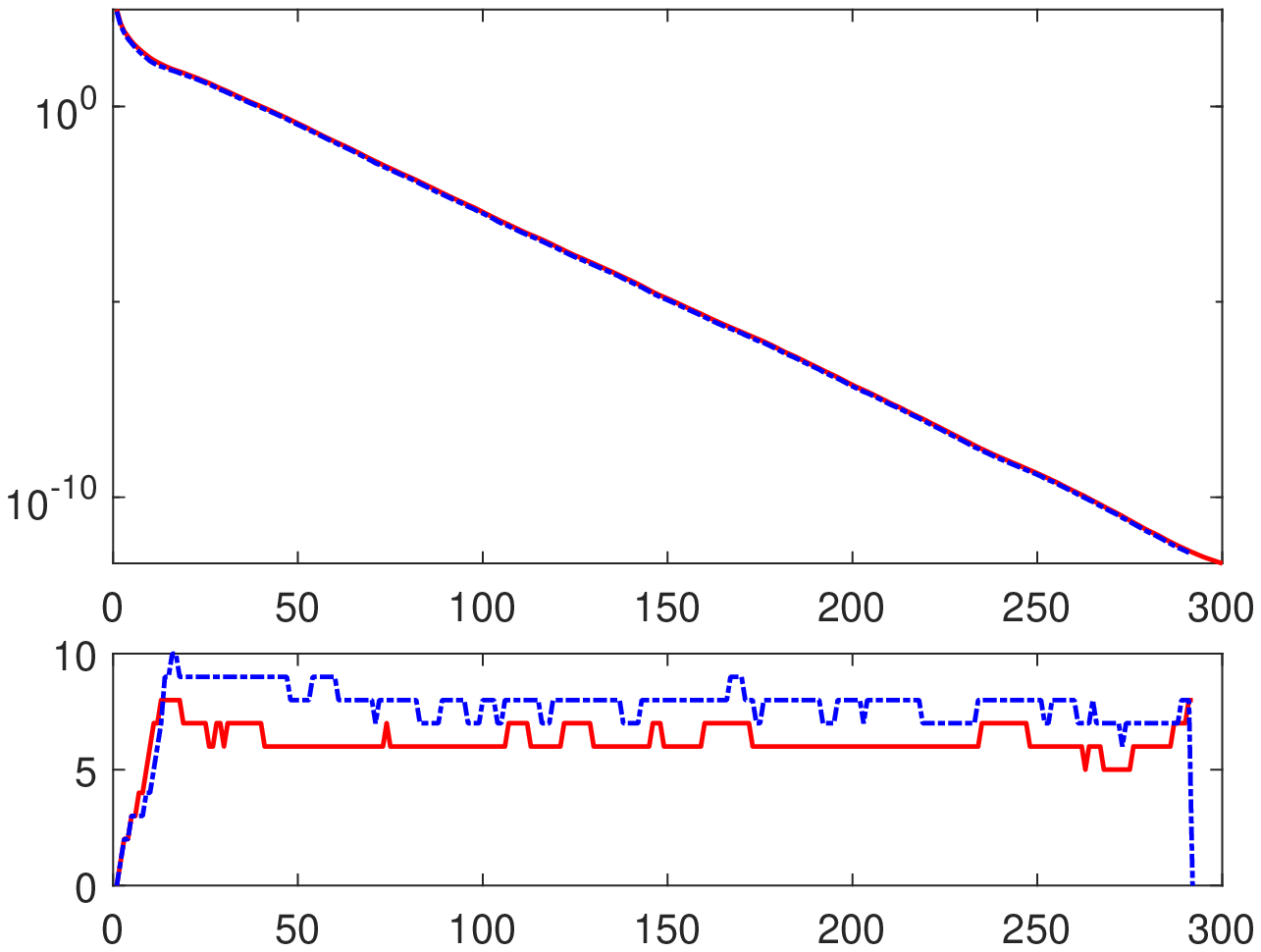}\\
    \textsf{iterations}
    \end{minipage}
    \caption{Matrix \texttt{\detokenize{flower_7_4}}, CGNE (left) and CRAIG (right): error~$\| x-x_k \|$ and adaptive error estimate (top), adaptively chosen delay $k-\ell$ and its ideal value (bottom)}
    \label{fig:flower_7_4}
\end{figure}

\begin{figure}[htp]
    \centering
    \begin{minipage}{0.45\textwidth}
    \centering
    matrix \texttt{\detokenize{cat_ears_4_4}}, CGNE\\
    \includegraphics[width=\textwidth]{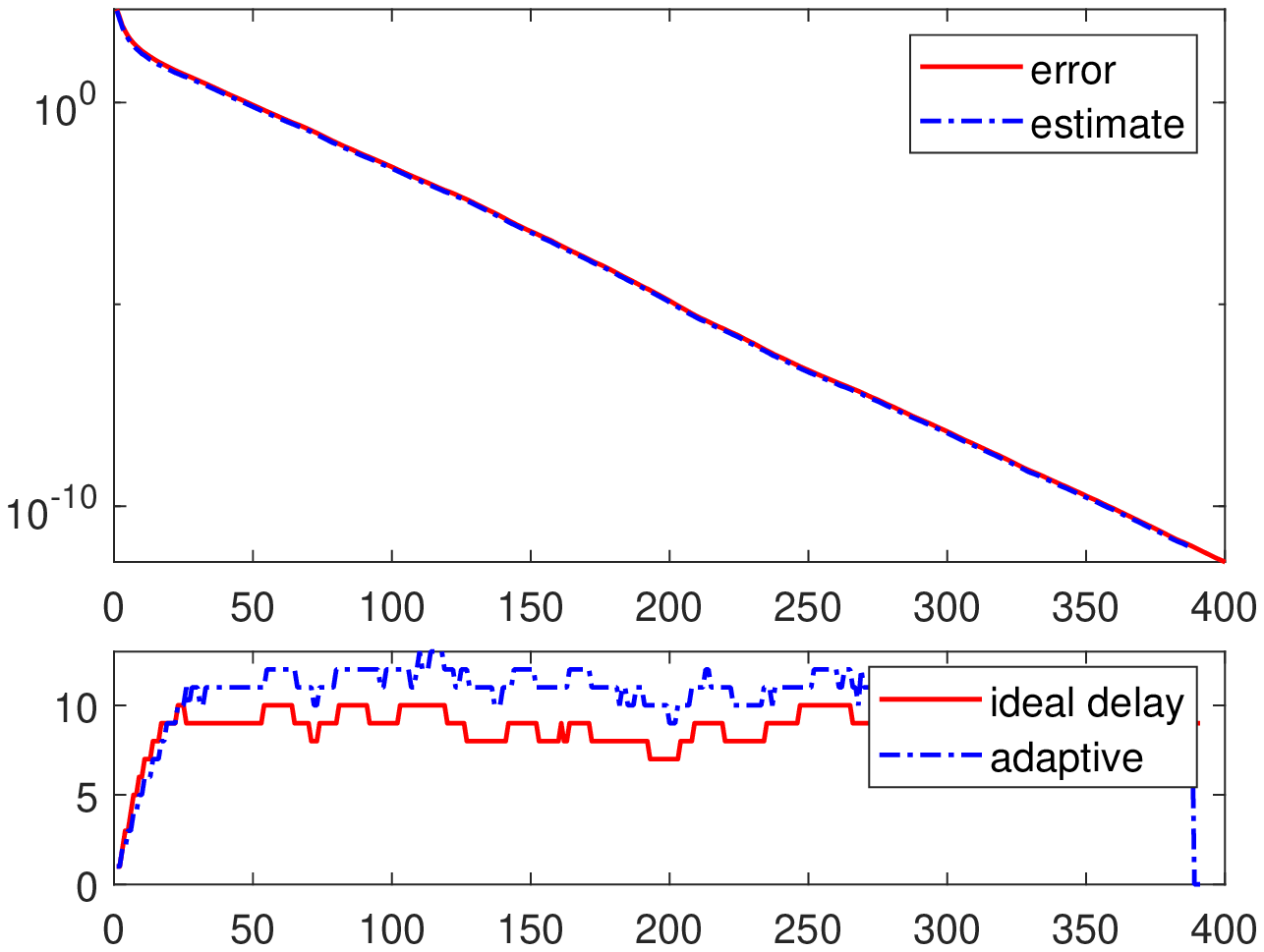}\\
    \textsf{iterations}
    \end{minipage}
    \hfill
    \begin{minipage}{0.45\textwidth}
    \centering
    matrix \texttt{\detokenize{cat_ears_4_4}}, CRAIG\\
    \includegraphics[width=\textwidth]{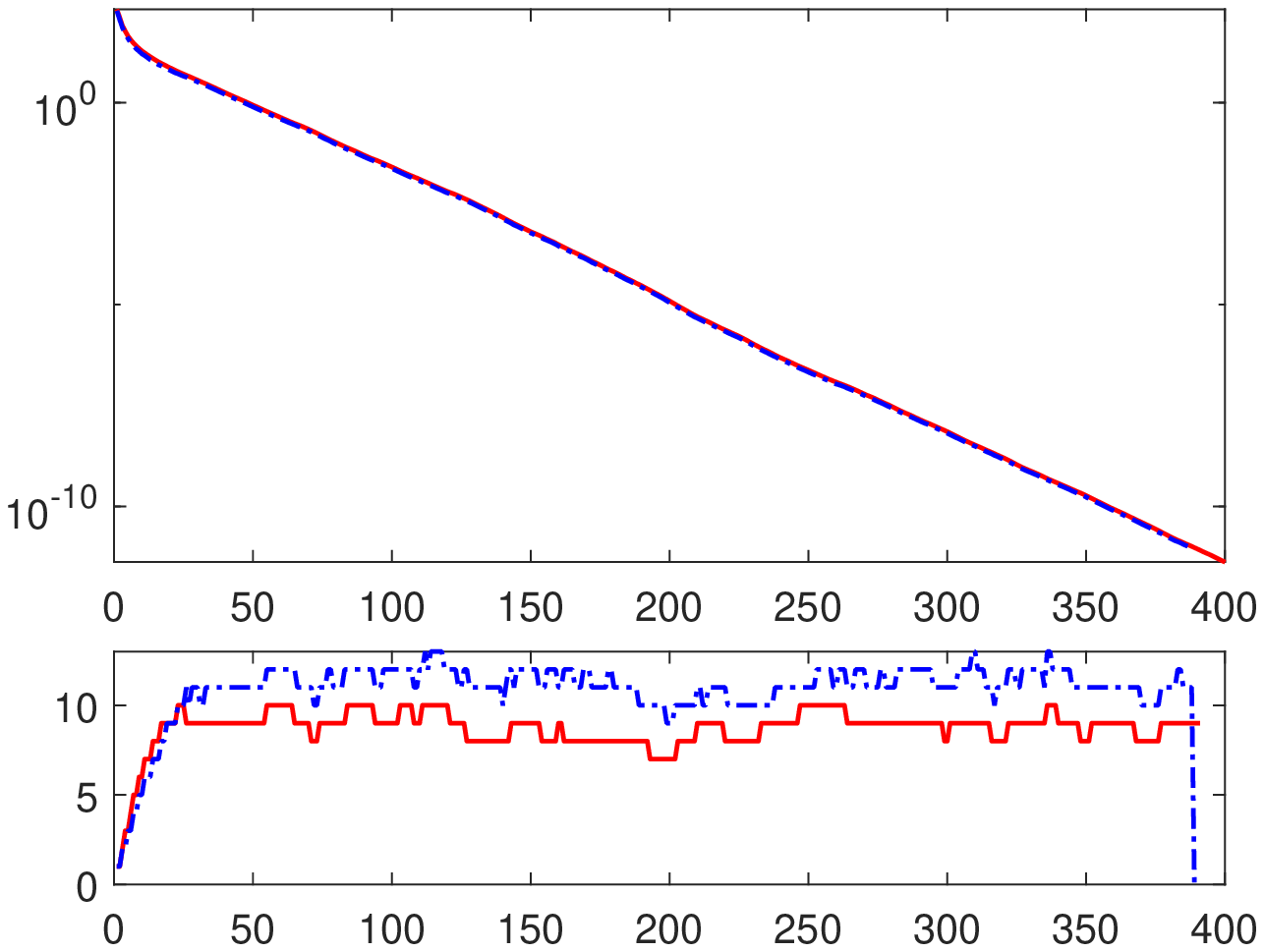}\\
    \textsf{iterations}
    \end{minipage}
    \caption{Matrix \texttt{\detokenize{cat_ears_4_4}}, CGNE (left) and CRAIG (right): error~$\| x-x_k \|$ and adaptive error estimate (top), adaptively chosen delay $k-\ell$ and its ideal value (bottom)}
    \label{fig:cat_ears_4_4}
\end{figure}

\begin{figure}[htp]
    \centering
    \begin{minipage}{0.45\textwidth}
    \centering
    matrix \texttt{\detokenize{lp_pilot}}, CGNE\\
    \includegraphics[width=\textwidth]{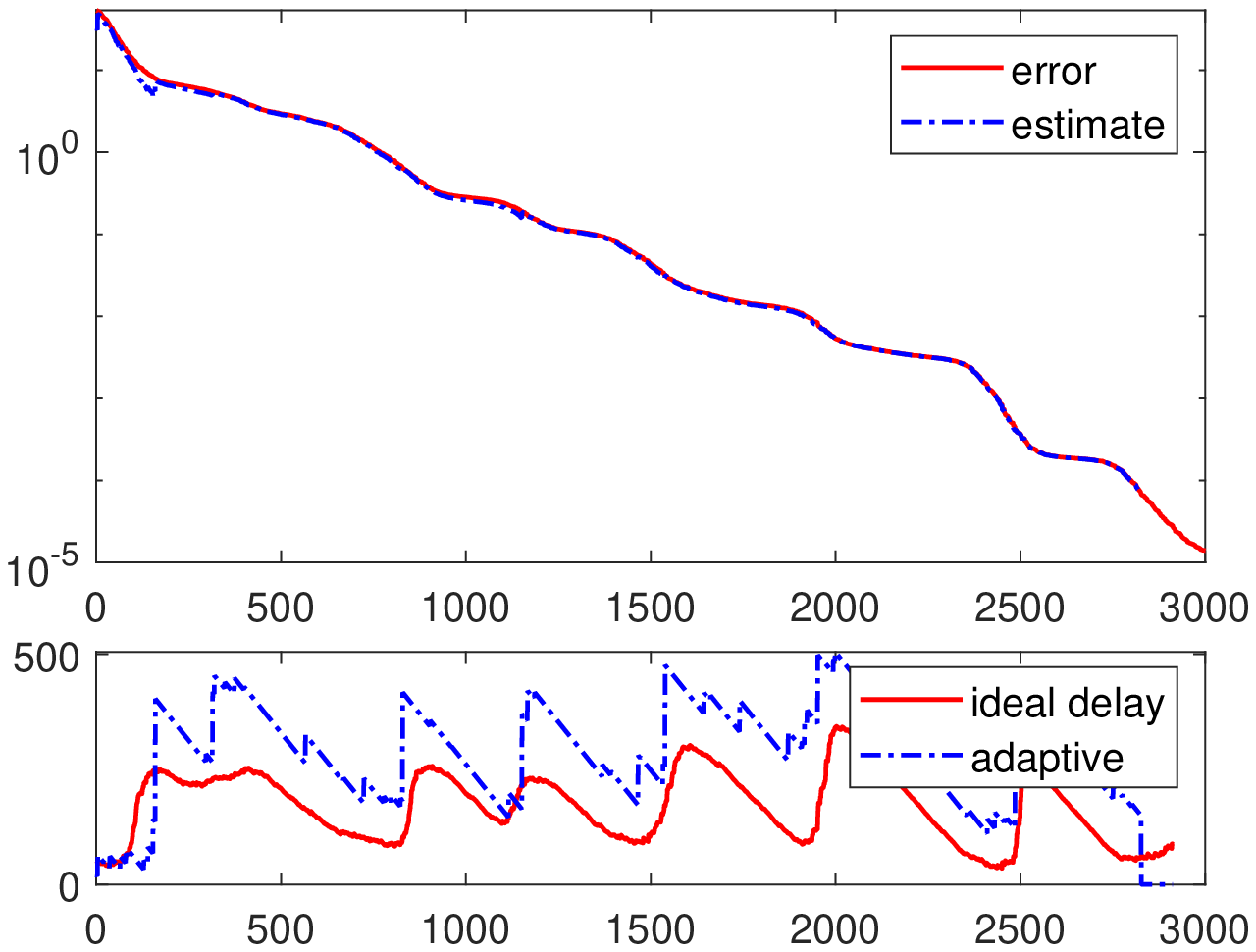}\\
    \textsf{iterations}
    \end{minipage}
    \hfill
    \begin{minipage}{0.45\textwidth}
    \centering
    matrix \texttt{\detokenize{lp_pilot}}, CRAIG\\
    \includegraphics[width=\textwidth]{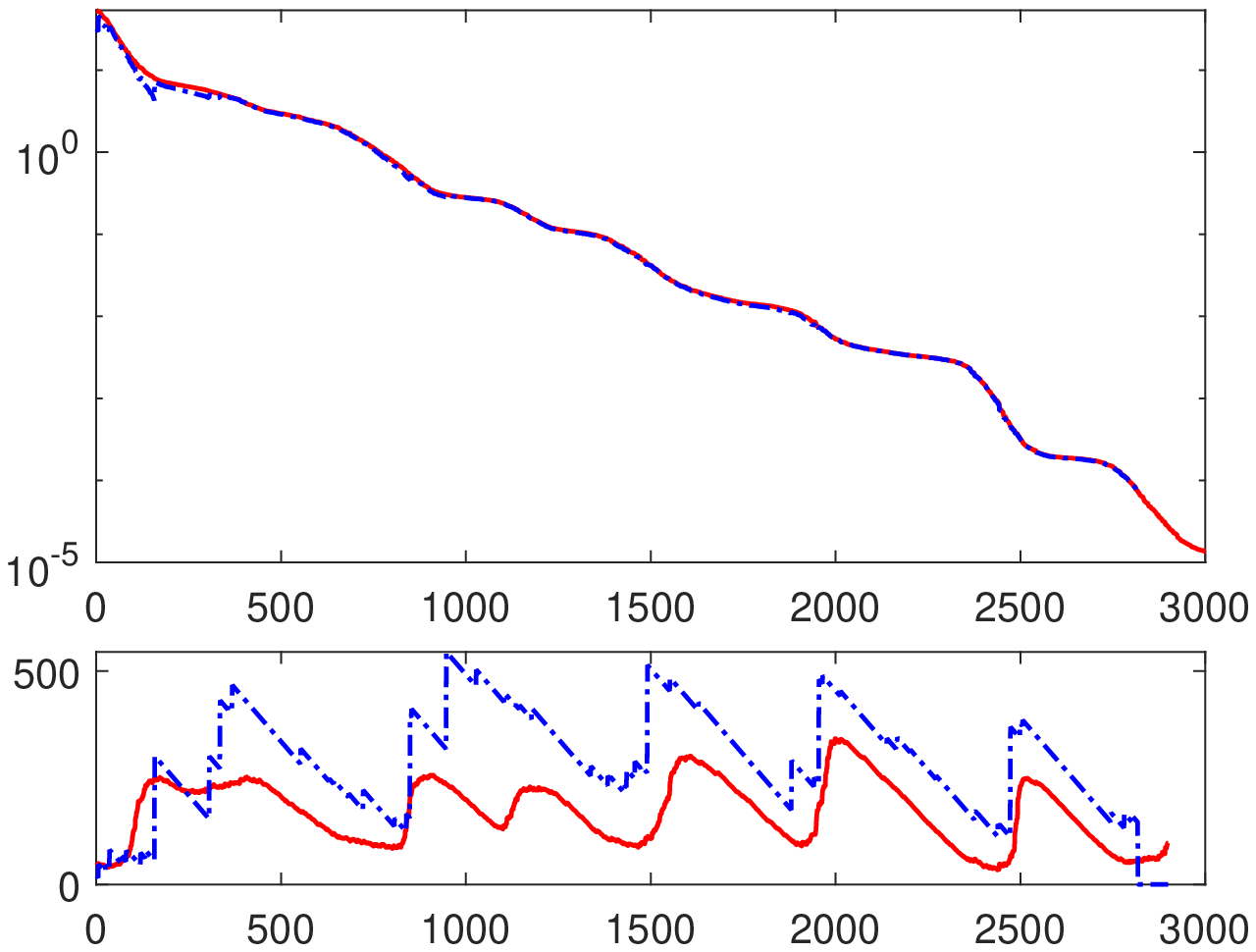}\\
    \textsf{iterations}
    \end{minipage}
    \caption{Matrix \texttt{\detokenize{lp_pilot}}, CGNE (left) and CRAIG (right): error~$\| x-x_k \|$ and adaptive error estimate (top), adaptively chosen delay $k-\ell$ and its ideal value (bottom)}
    \label{fig:lp_pilot}
\end{figure}

\FloatBarrier

\subsection{Preconditioned least-squares problems}

In Figures \ref{pfig:illc1033}--\ref{pfig:sls} we plot the results for our adaptive error estimate in preconditioned least-squares problems solved by CGLS and LSQR. The preconditioner~$L$ is constructed from~$A$ using MATLAB interface of {\tt HSL\_MI35} from \cite{HSL}, i.e., computing the incomplete Cholesky decomposition of $A^TA$, without explicitly forming it.
In the figures, we observe a very satisfactory behavior of the adaptive error estimate.

\begin{figure}[htp]
    \centering
    \begin{minipage}{0.45\textwidth}
    \centering
    matrix \texttt{\detokenize{illc1033}}, PCGLS\\
    \includegraphics[width=\textwidth]{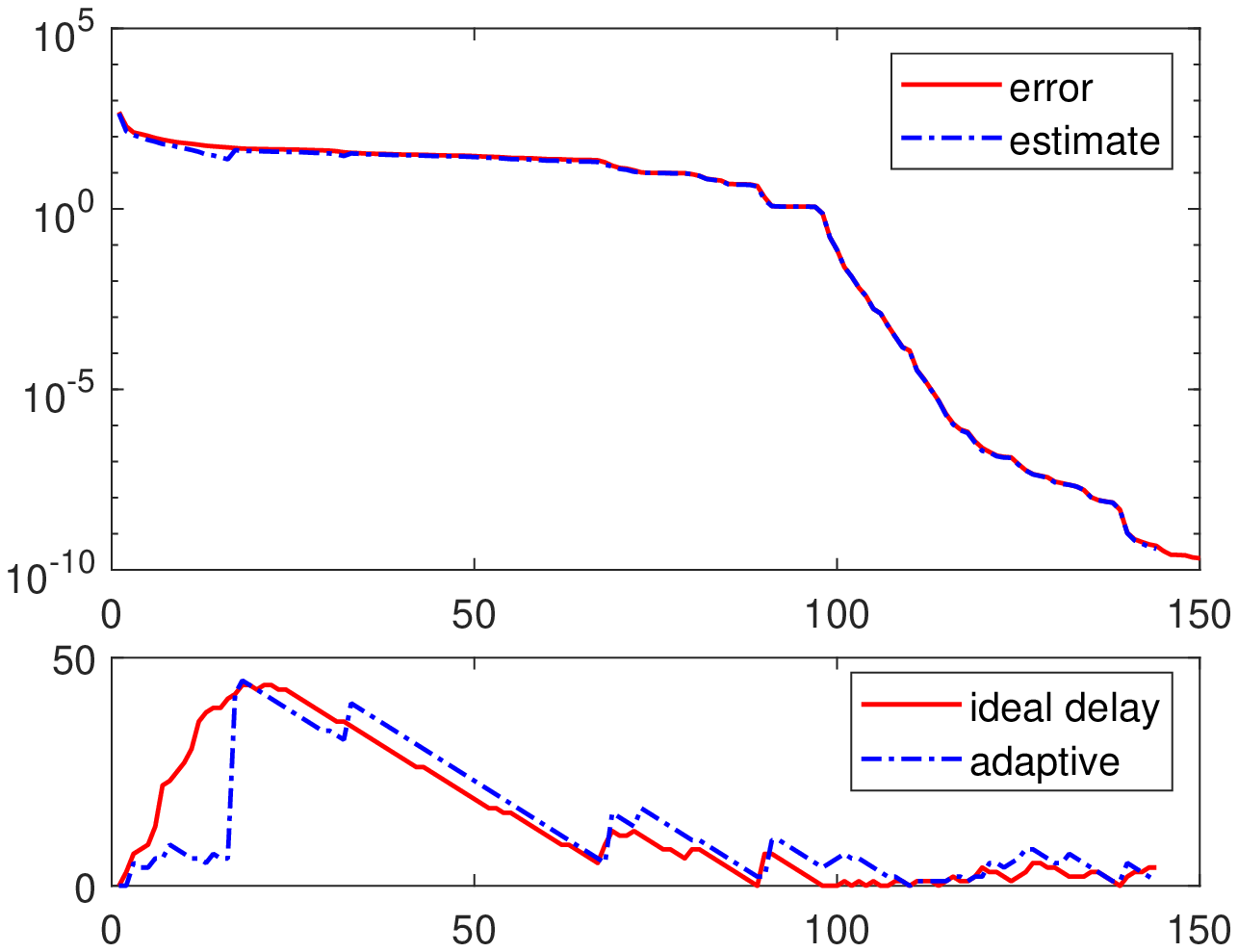}\\
    \textsf{iterations}
    \end{minipage}
    \hfill
    \begin{minipage}{0.45\textwidth}
    \centering
    matrix \texttt{\detokenize{illc1033}}, PLSQR\\
    \includegraphics[width=\textwidth]{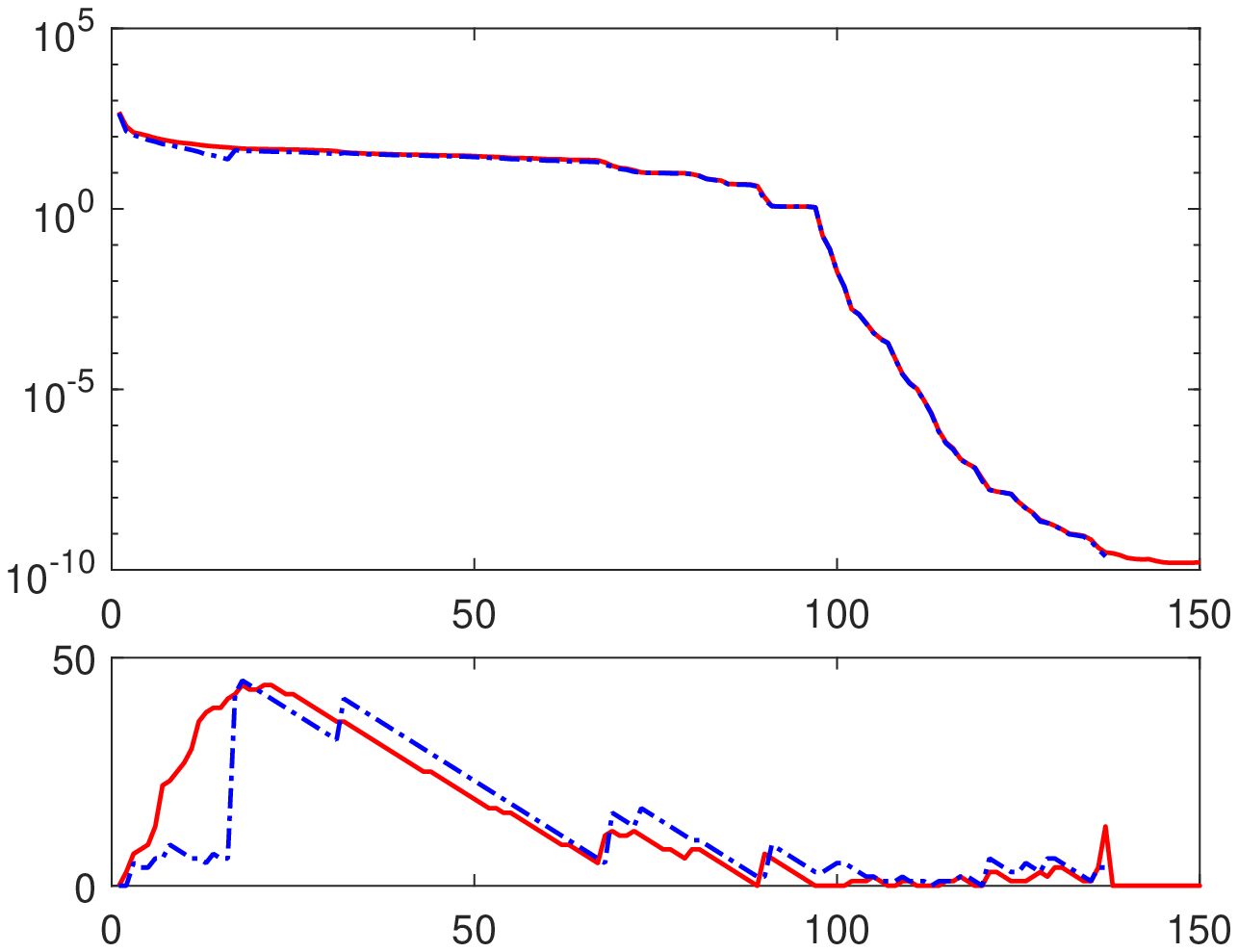}\\
    \textsf{iterations}
    \end{minipage}
    \caption{Matrix \texttt{\detokenize{illc1033}} with preconditioning, PCGLS (left) and PLSQR (right): error~$\| x-x_k \|_{A^TA}$ and adaptive error estimate (top), adaptively chosen delay $k-\ell$ and its ideal value (bottom)}
    \label{pfig:illc1033}
\end{figure}

\begin{figure}[htp]
    \centering
    \begin{minipage}{0.45\textwidth}
    \centering
    matrix \texttt{\detokenize{illc1850}}, PCGLS\\
    \includegraphics[width=\textwidth]{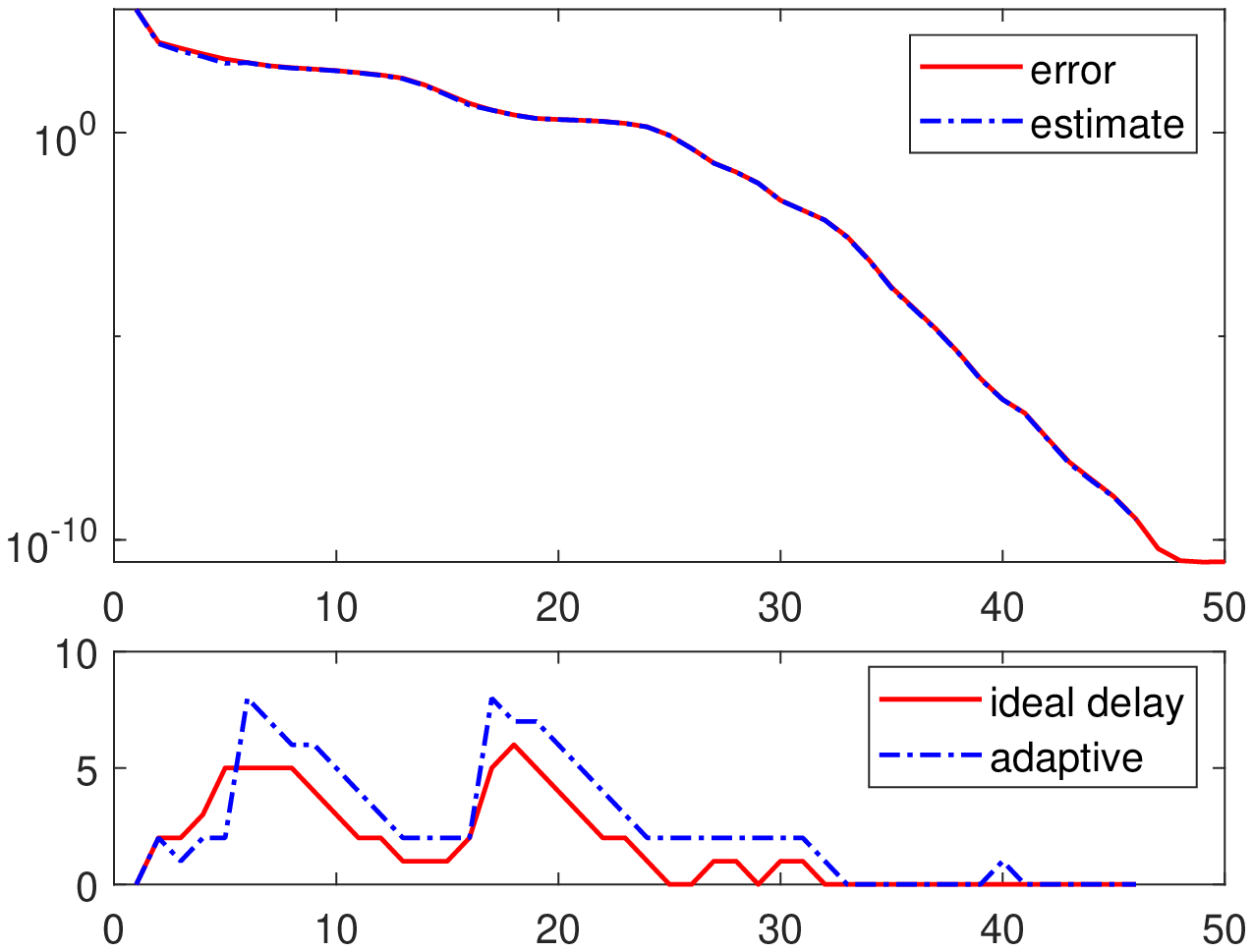}\\
    \textsf{iterations}
    \end{minipage}
    \hfill
    \begin{minipage}{0.45\textwidth}
    \centering
    matrix \texttt{\detokenize{illc1850}}, PLSQR\\
    \includegraphics[width=\textwidth]{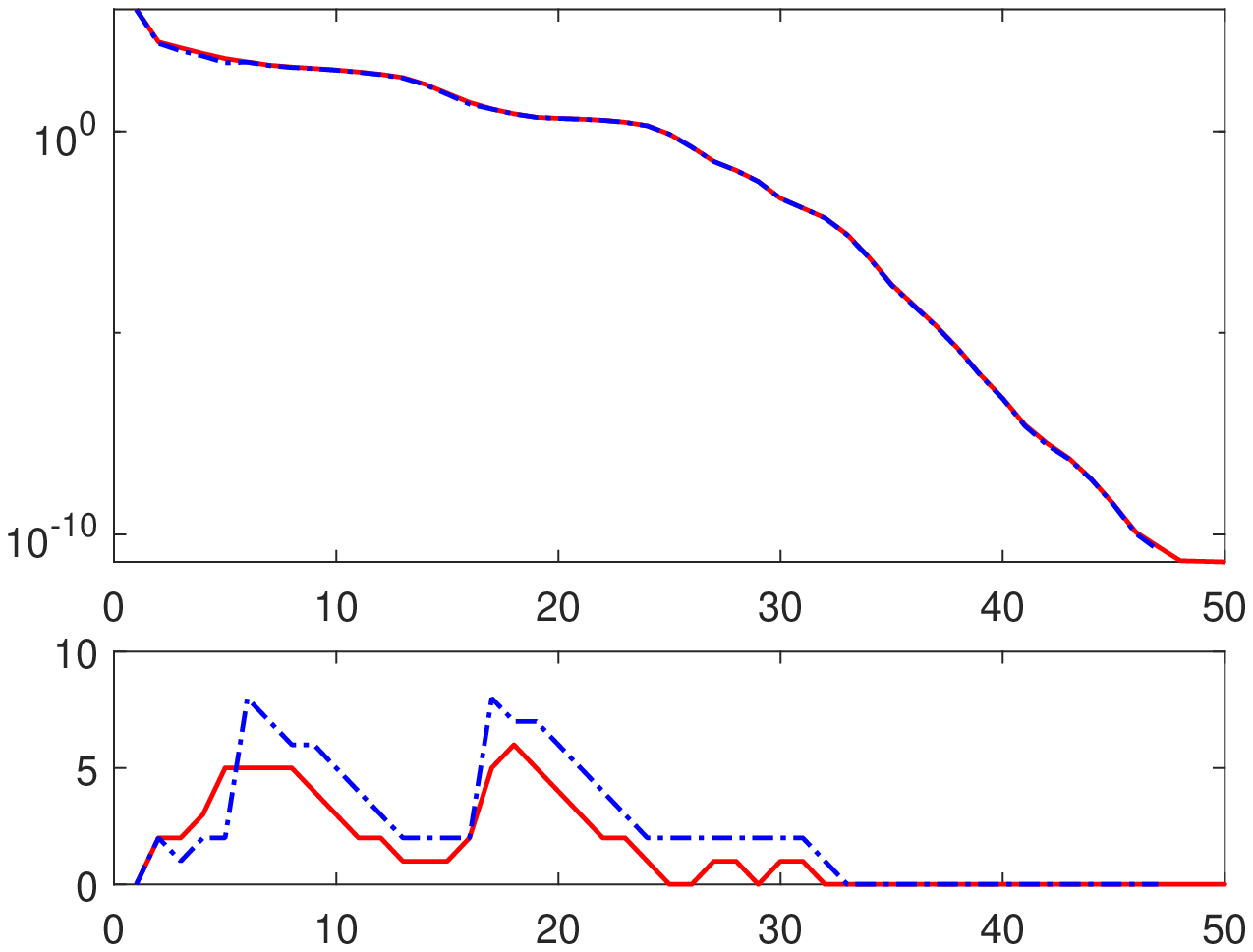}\\
    \textsf{iterations}
    \end{minipage}
    \caption{Matrix \texttt{\detokenize{illc1850}} with preconditioning, PCGLS (left) and PLSQR (right): error~$\| x-x_k \|_{A^TA}$ and adaptive error estimate (top), adaptively chosen delay $k-\ell$ and its ideal value (bottom)}
    \label{pfig:illc1850}
\end{figure}

\begin{figure}[htp]
    \centering
    \begin{minipage}{0.45\textwidth}
    \centering
    matrix \texttt{\detokenize{well1033}}, PCGLS\\
    \includegraphics[width=\textwidth]{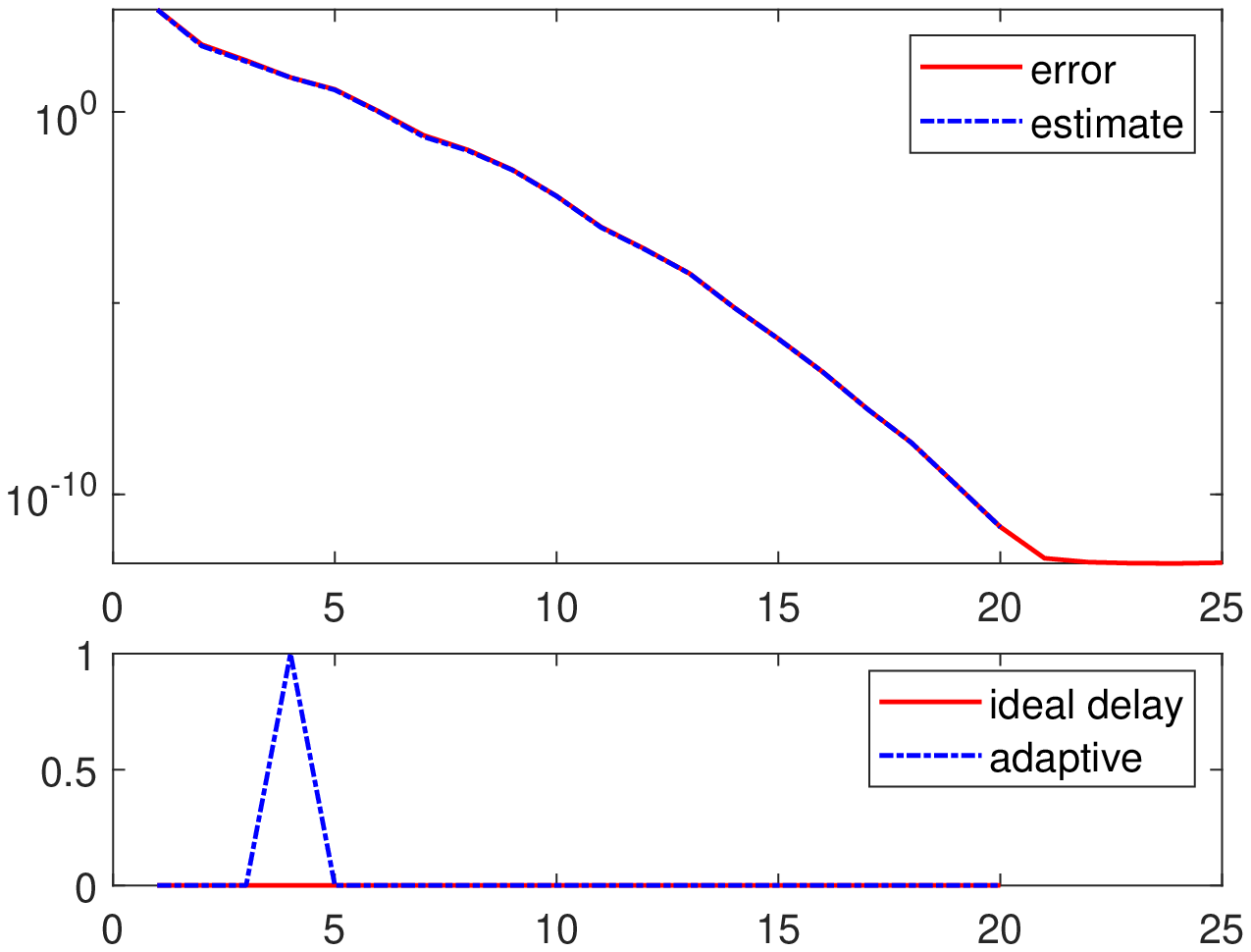}\\
    \textsf{iterations}
    \end{minipage}
    \hfill
    \begin{minipage}{0.45\textwidth}
    \centering
    matrix \texttt{\detokenize{well1033}}, PLSQR\\
    \includegraphics[width=\textwidth]{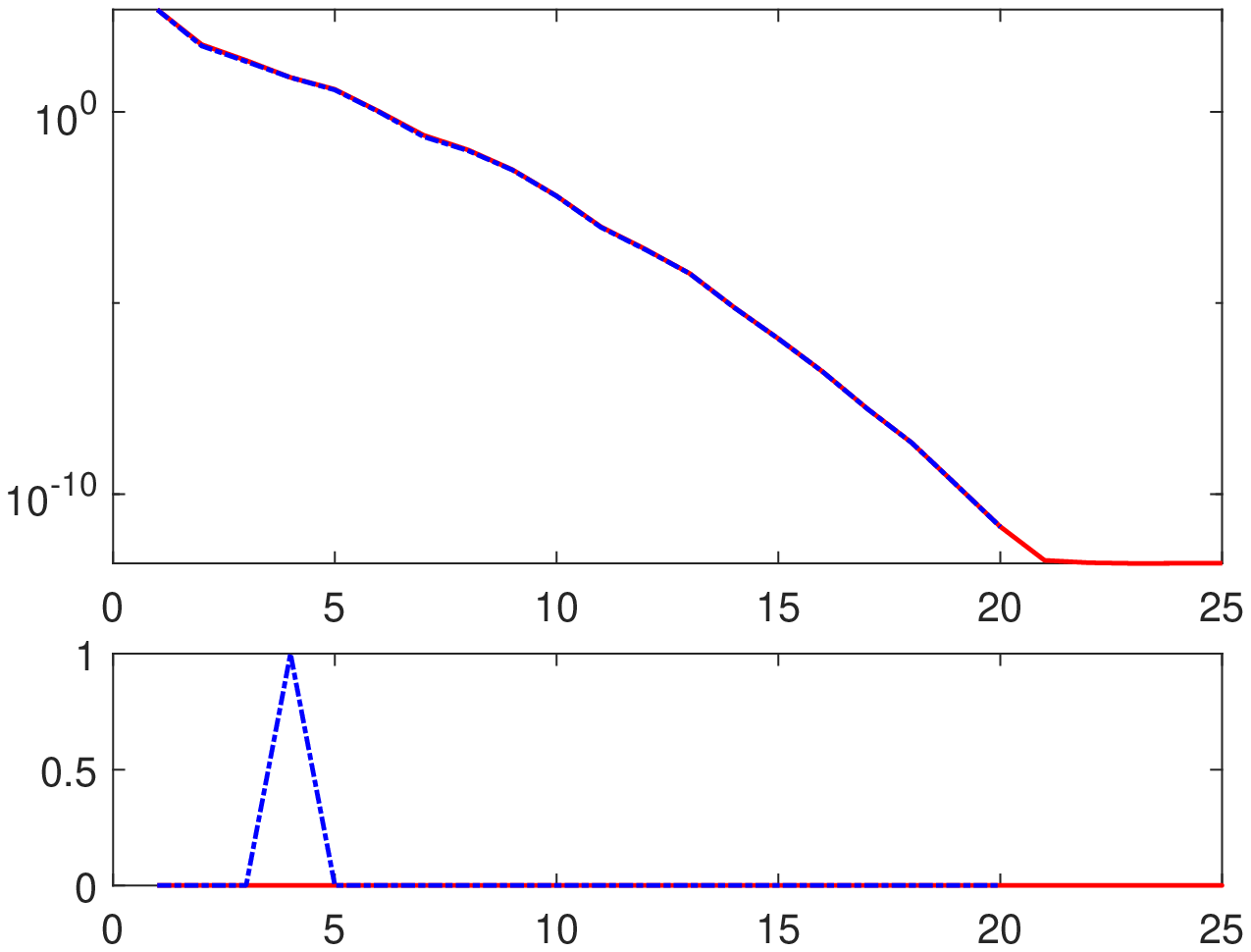}\\
    \textsf{iterations}
    \end{minipage}
    \caption{Matrix \texttt{\detokenize{well1033}} with preconditioning, PCGLS (left) and PLSQR (right): error~$\| x-x_k \|_{A^TA}$ and adaptive error estimate (top), adaptively chosen delay $k-\ell$ and its ideal value (bottom)}
    \label{pfig:well1033}
\end{figure}

\begin{figure}[htp]
    \centering
    \begin{minipage}{0.45\textwidth}
    \centering
    matrix \texttt{\detokenize{well1850}}, PCGLS\\
    \includegraphics[width=\textwidth]{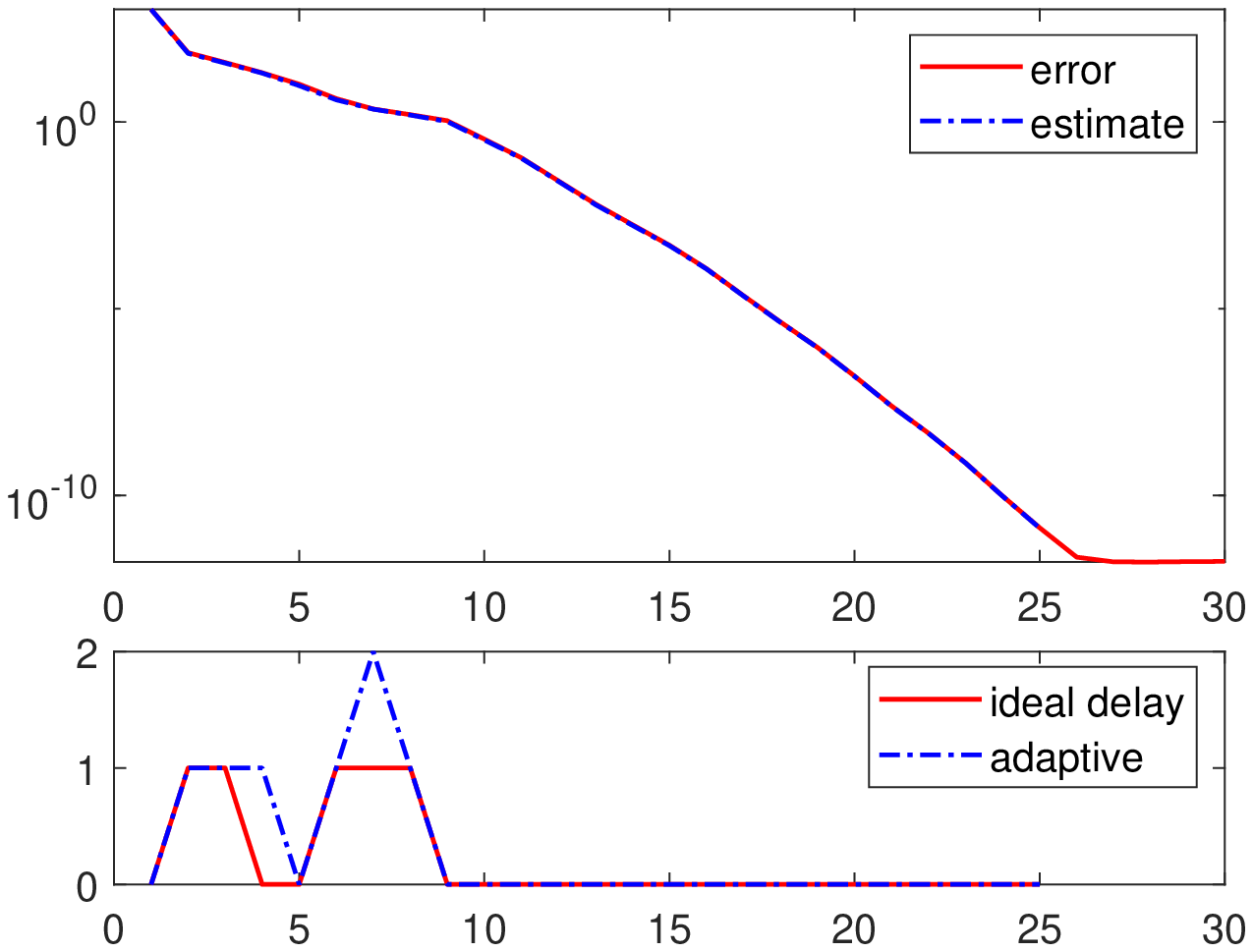}\\
    \textsf{iterations}
    \end{minipage}
    \hfill
    \begin{minipage}{0.45\textwidth}
    \centering
    matrix \texttt{\detokenize{well1850}}, PLSQR\\
    \includegraphics[width=\textwidth]{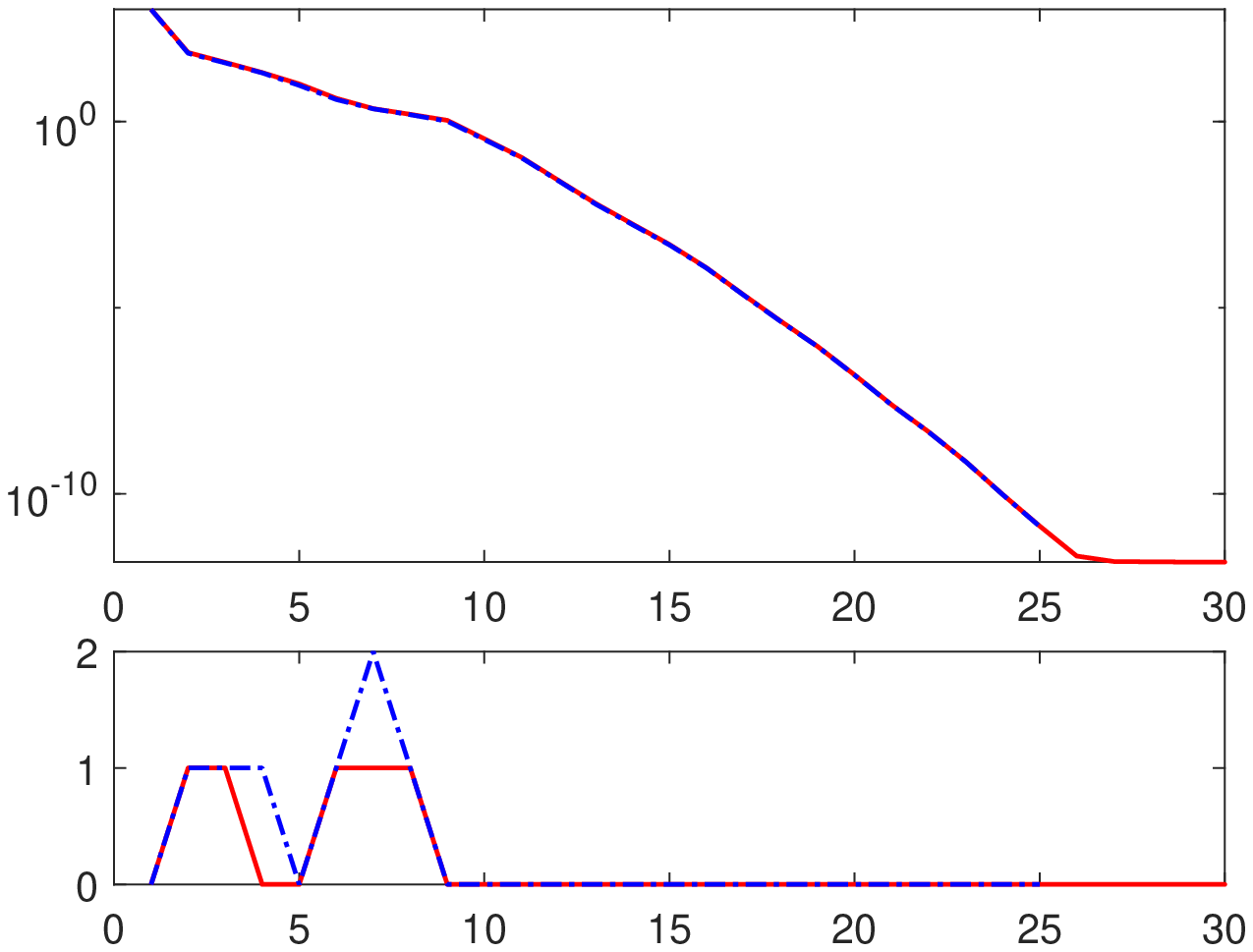}\\
    \textsf{iterations}
    \end{minipage}
    \caption{Matrix \texttt{\detokenize{well1850}} with preconditioning, PCGLS (left) and PLSQR (right): error~$\| x-x_k \|_{A^TA}$ and adaptive error estimate (top), adaptively chosen delay $k-\ell$ and its ideal value (bottom)}
    \label{pfig:well1850}
\end{figure}

\begin{figure}[htp]
    \centering
    \begin{minipage}{0.45\textwidth}
    \centering
    matrix \texttt{\detokenize{sls}}, PCGLS\\
    \includegraphics[width=\textwidth]{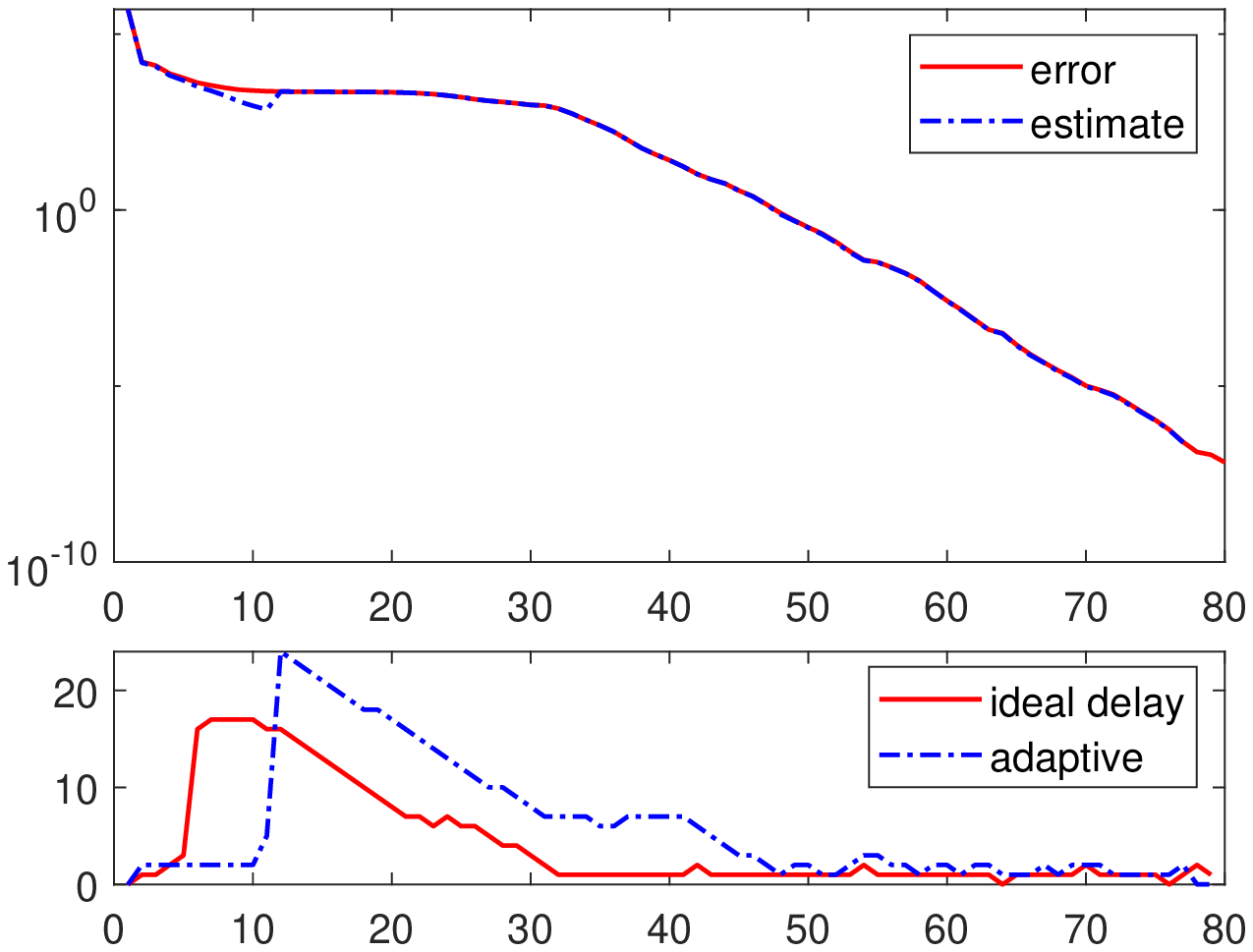}\\
    \textsf{iterations}
    \end{minipage}
    \hfill
    \begin{minipage}{0.45\textwidth}
    \centering
    matrix \texttt{\detokenize{sls}}, PLSQR\\
    \includegraphics[width=\textwidth]{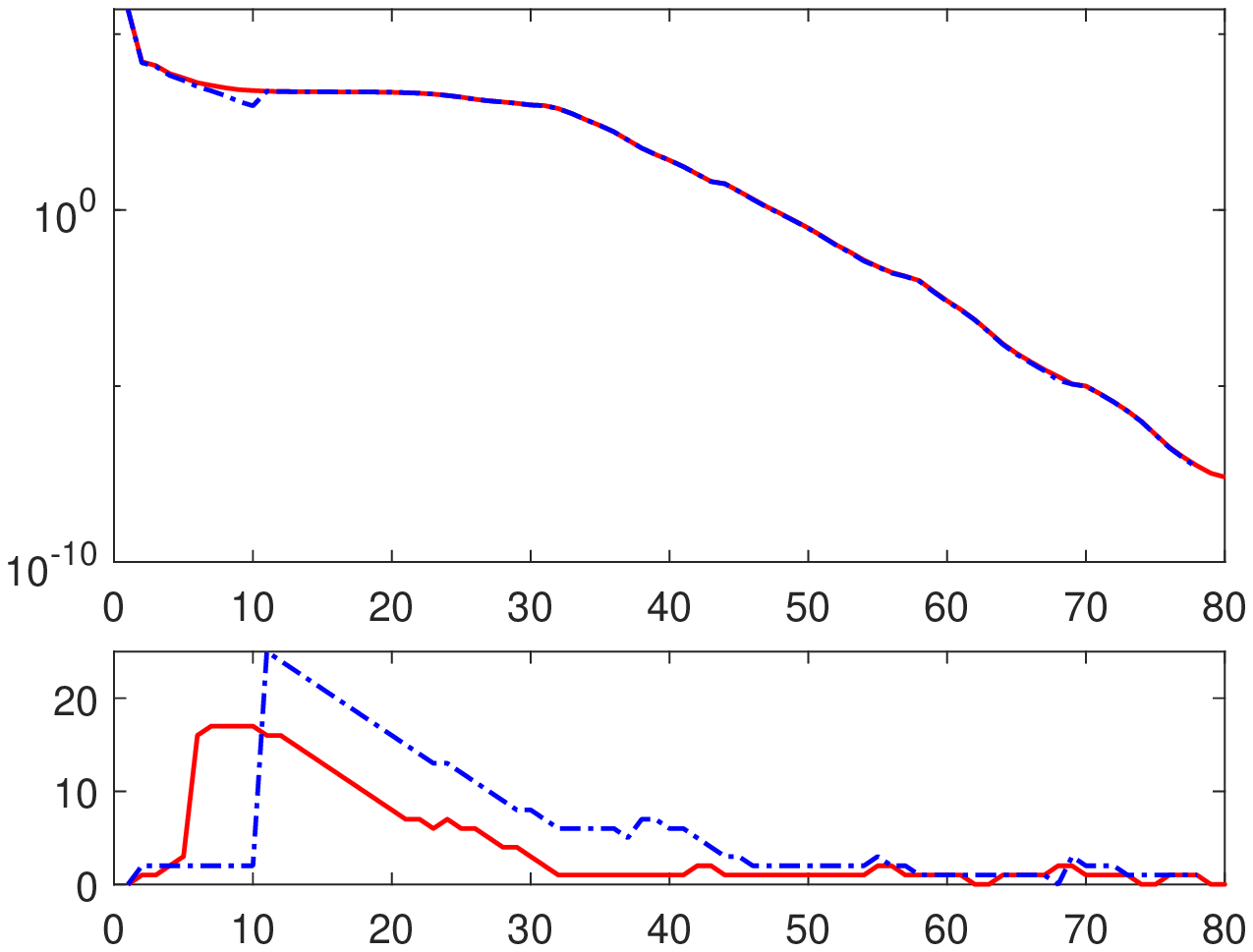}\\
    \textsf{iterations}
    \end{minipage}
    \caption{Matrix \texttt{\detokenize{sls}} with preconditioning, PCGLS (left) and PLSQR (right): error~$\| x-x_k \|_{A^TA}$ and adaptive error estimate (top), adaptively chosen delay $k-\ell$ and its ideal value (bottom)}
    \label{pfig:sls}
\end{figure}

\FloatBarrier

\subsection{Preconditioned least-norm problems}

The results for preconditioned least-norm problems solved by CGNE and CRAIG are given in Figures \ref{pfig:Delor64K}--\ref{pfig:lp_pilot}. As in the previous subsection, {\tt HSL\_MI35} is used to construct the preconditioner~$L$. The experiments confirm that the estimate can be reliably used also for preconditioned least-norm problems.

\begin{figure}[htp]
    \centering
    \begin{minipage}{0.45\textwidth}
    \centering
    matrix \texttt{\detokenize{Delor64K}}, PCGNE\\
    \includegraphics[width=\textwidth]{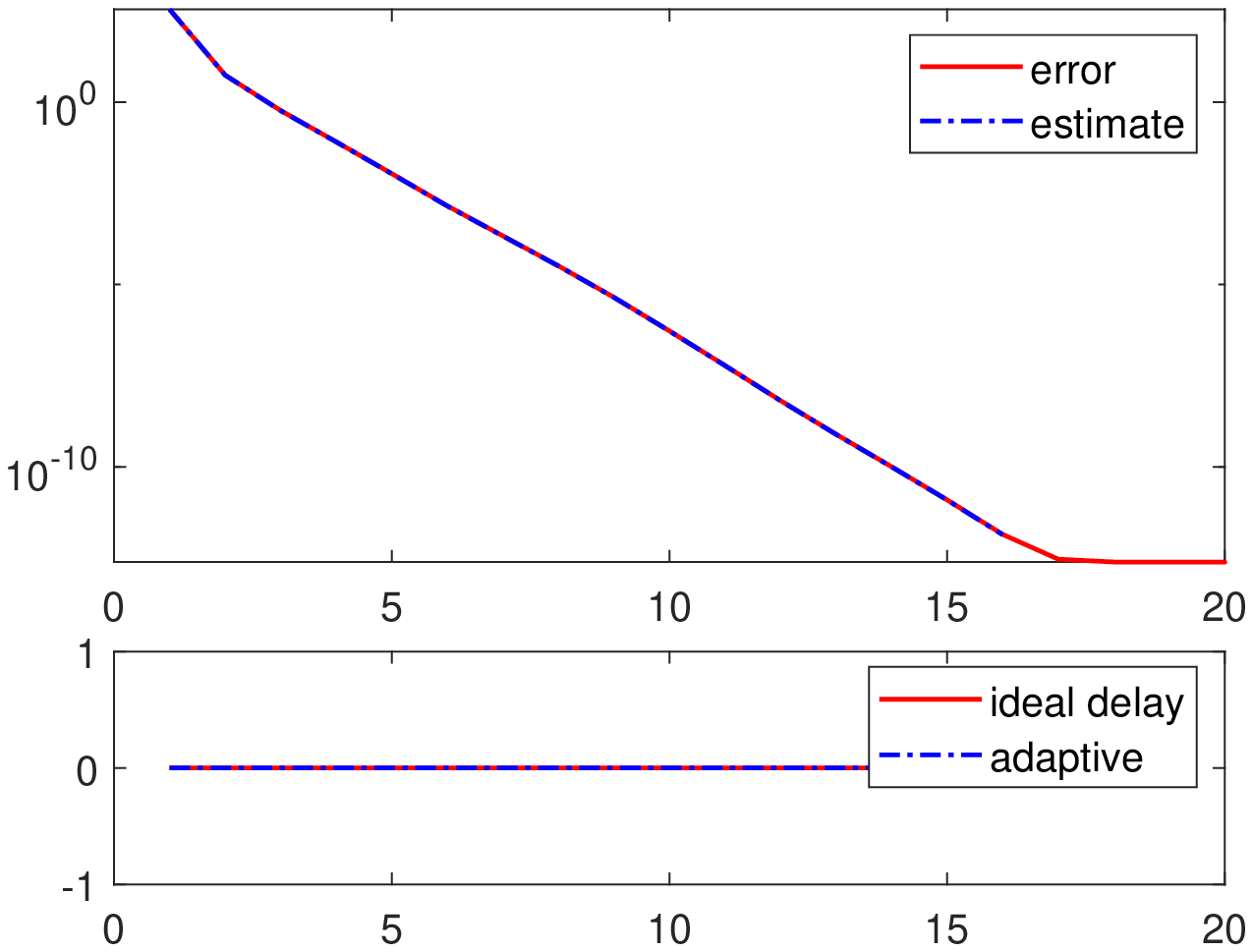}\\
    \textsf{iterations}
    \end{minipage}
    \hfill
    \begin{minipage}{0.45\textwidth}
    \centering
    matrix \texttt{\detokenize{Delor64K}}, PCRAIG\\
    \includegraphics[width=\textwidth]{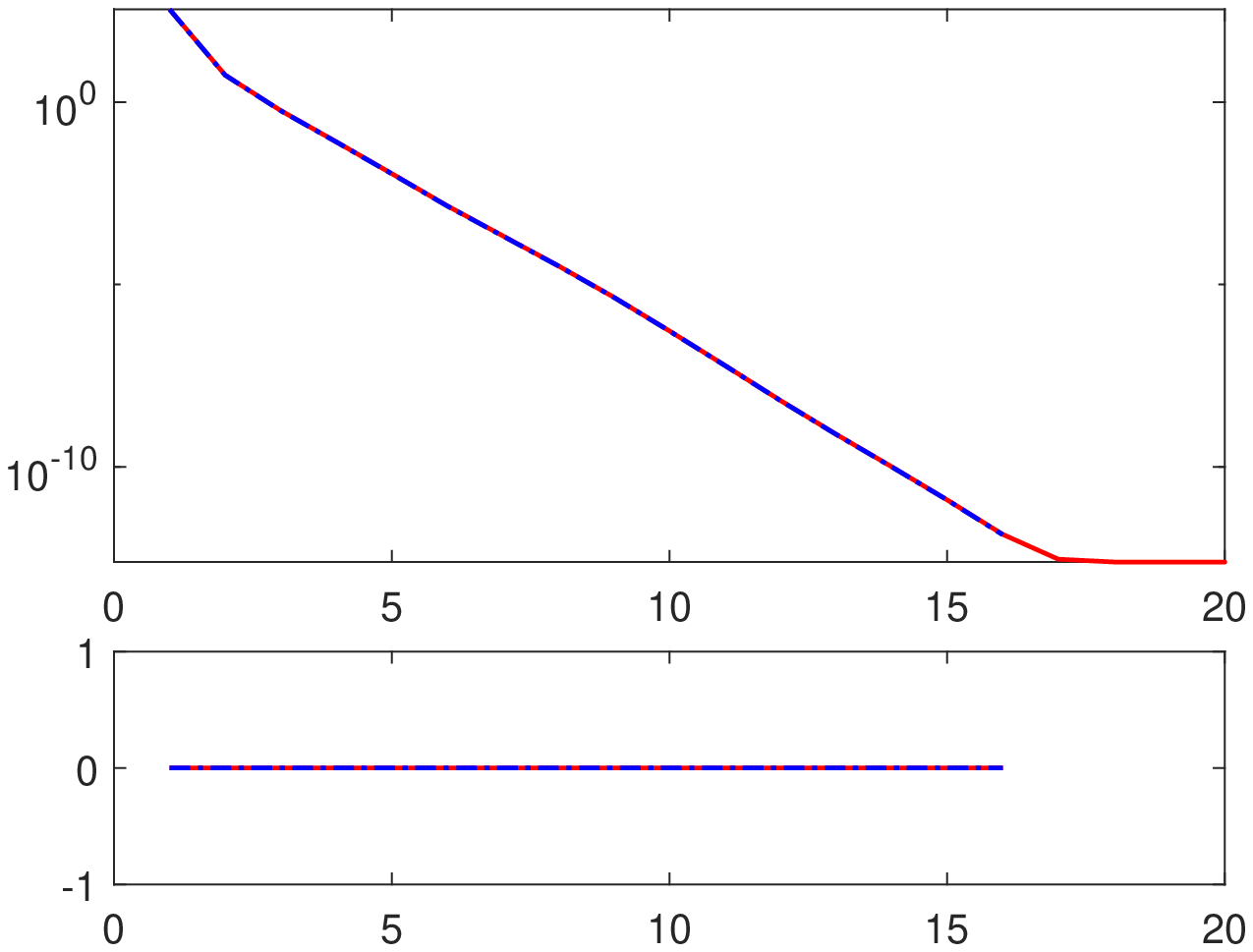}\\
    \textsf{iterations}
    \end{minipage}
    \caption{Matrix \texttt{\detokenize{Delor64K}} with preconditioning, PCGNE (left) and PCRAIG (right): error~$\| x-x_k \|$ and adaptive error estimate (top), adaptively chosen delay $k-\ell$ and its ideal value (bottom)}
    \label{pfig:Delor64K}
\end{figure}

\begin{figure}[htp]
    \centering
    \begin{minipage}{0.45\textwidth}
    \centering
    matrix \texttt{\detokenize{Delor338K}}, PCGNE\\
    \includegraphics[width=\textwidth]{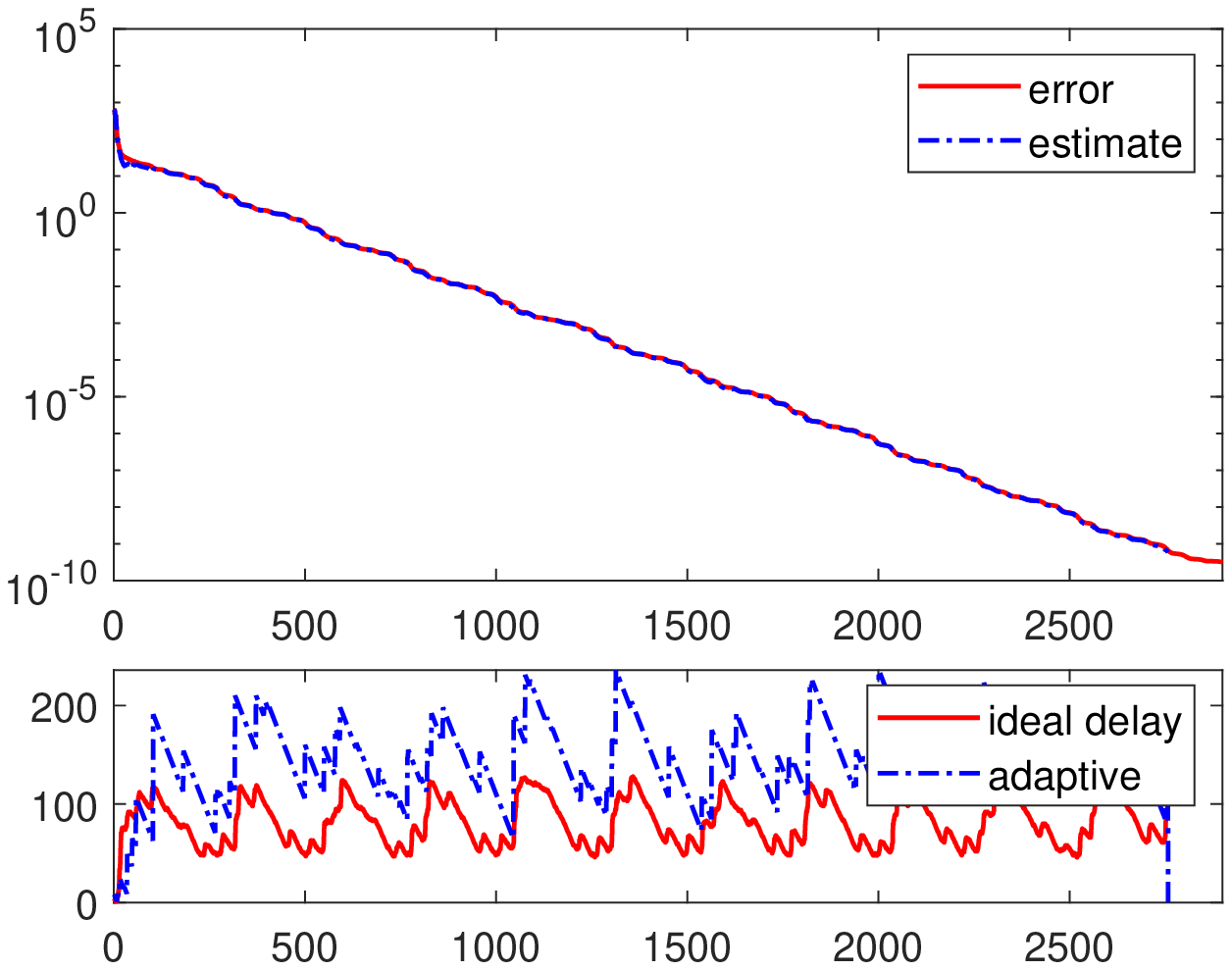}\\
    \textsf{iterations}
    \end{minipage}
    \hfill
    \begin{minipage}{0.45\textwidth}
    \centering
    matrix \texttt{\detokenize{Delor338K}}, PCRAIG\\
    \includegraphics[width=\textwidth]{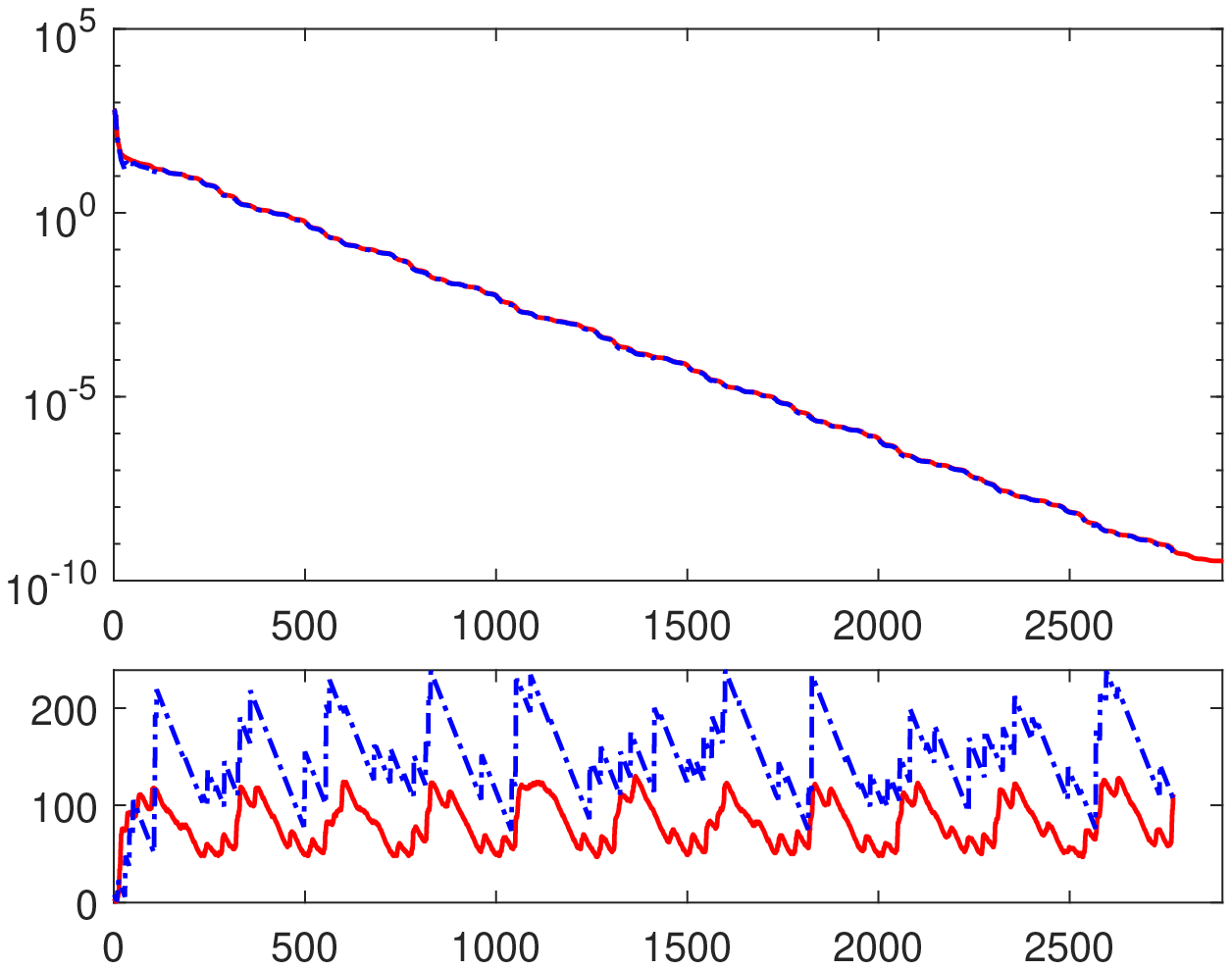}\\
    \textsf{iterations}
    \end{minipage}
    \caption{Matrix \texttt{\detokenize{Delor338K}} with preconditioning, PCGNE (left) and PCRAIG (right): error~$\| x-x_k \|$ and adaptive error estimate (top), adaptively chosen delay $k-\ell$ and its ideal value (bottom)}
    \label{pfig:Delor338K}
\end{figure}

\begin{figure}[htp]
    \centering
    \begin{minipage}{0.45\textwidth}
    \centering
    matrix \texttt{\detokenize{flower_7_4}}, PCGNE\\
    \includegraphics[width=\textwidth]{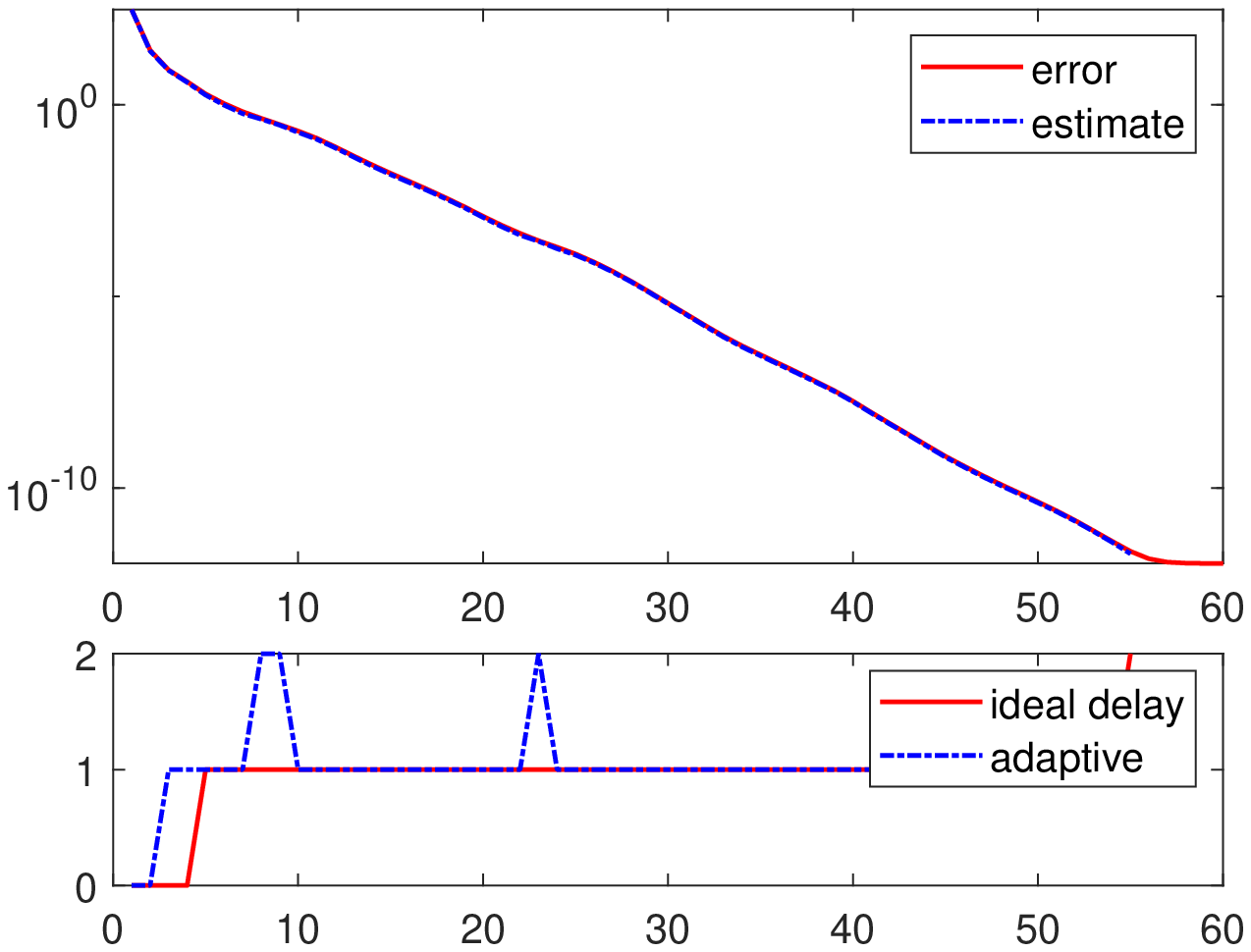}\\
    \textsf{iterations}
    \end{minipage}
    \hfill
    \begin{minipage}{0.45\textwidth}
    \centering
    matrix \texttt{\detokenize{flower_7_4}}, PCRAIG\\
    \includegraphics[width=\textwidth]{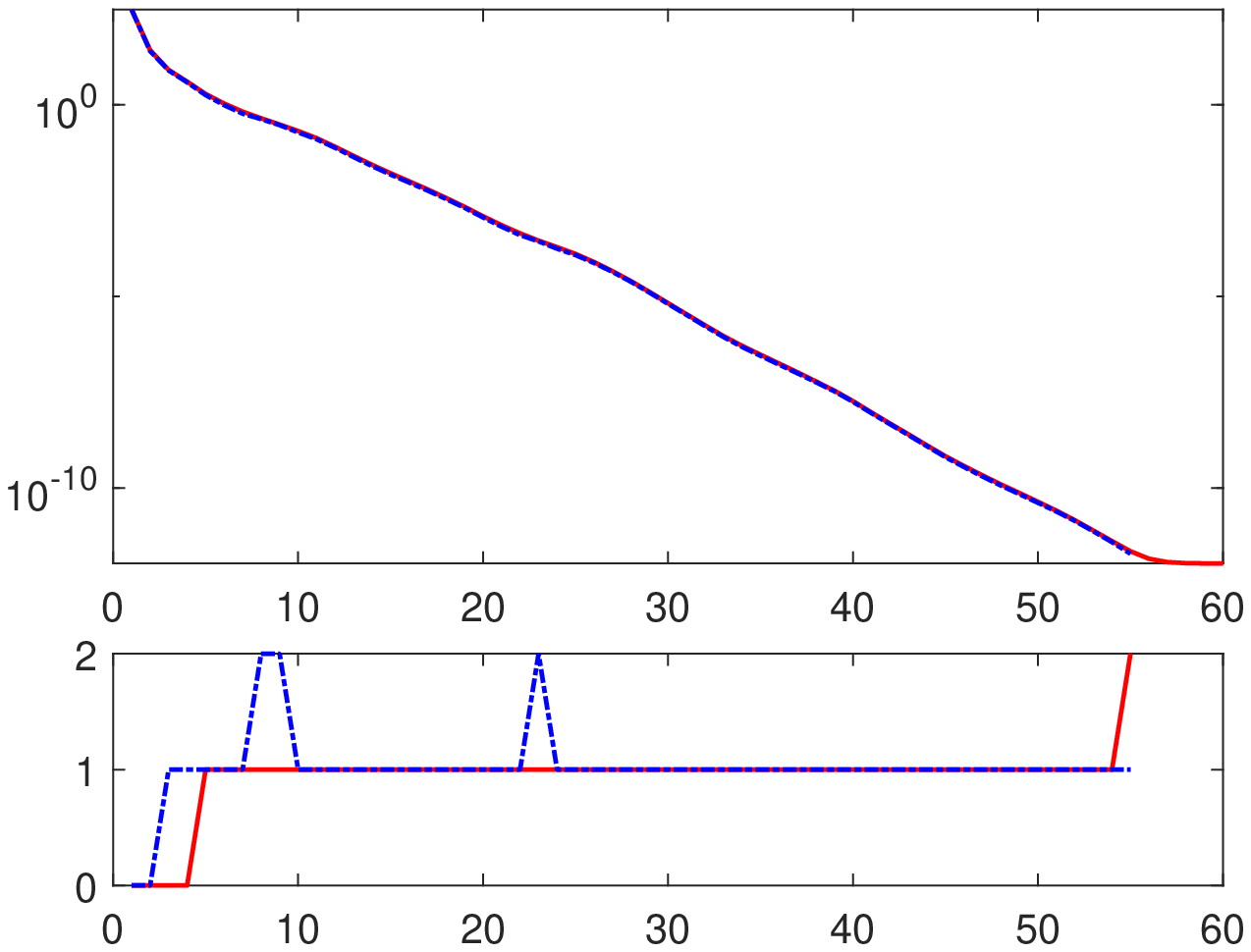}\\
    \textsf{iterations}
    \end{minipage}
    \caption{Matrix \texttt{\detokenize{flower_7_4}} with preconditioning, PCGNE (left) and PCRAIG (right): error~$\| x-x_k \|$ and adaptive error estimate (top), adaptively chosen delay $k-\ell$ and its ideal value (bottom)}
    \label{pfig:flower_7_4}
\end{figure}

\begin{figure}[htp]
    \centering
    \begin{minipage}{0.45\textwidth}
    \centering
    matrix \texttt{\detokenize{cat_ears_4_4}}, PCGNE\\
    \includegraphics[width=\textwidth]{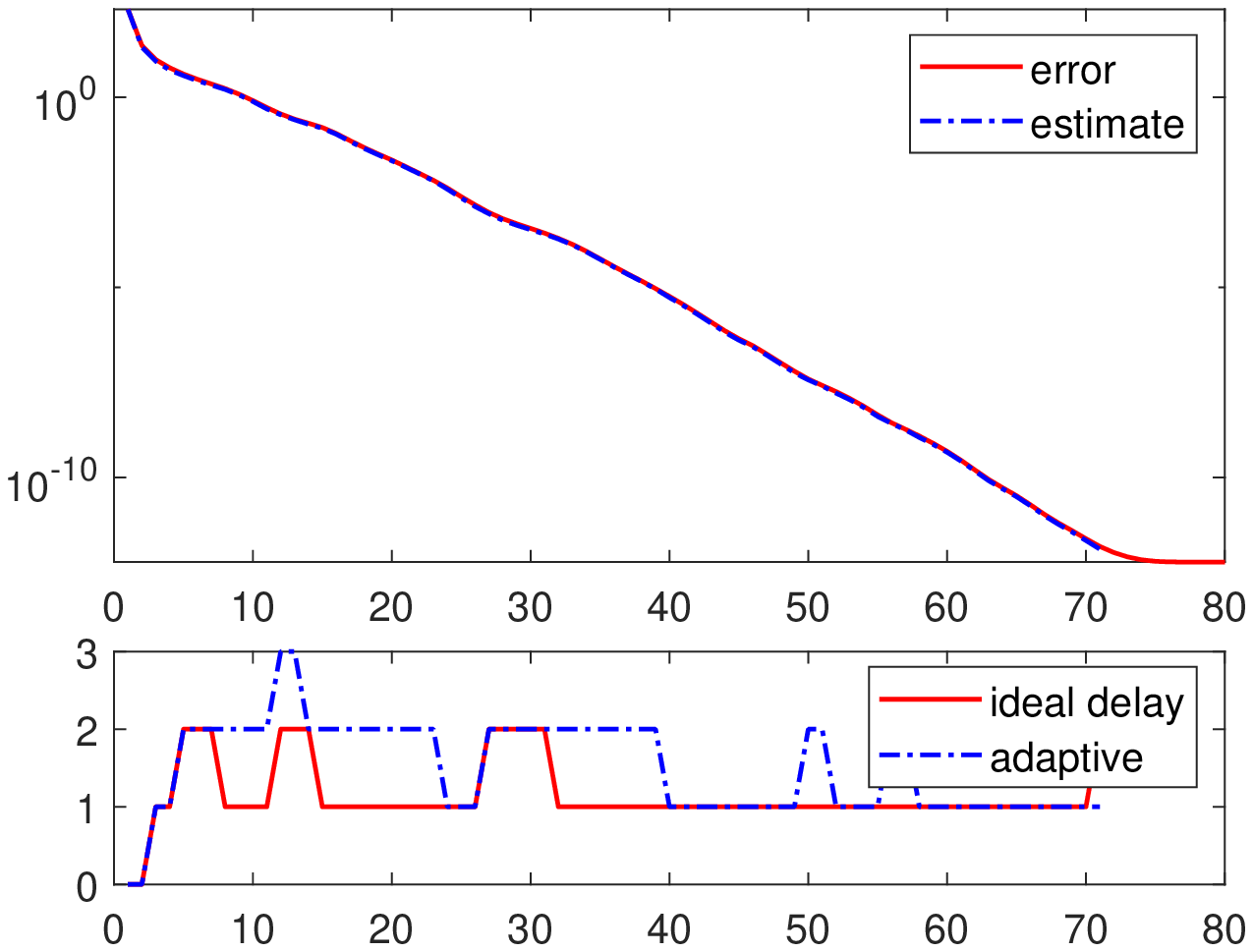}\\
    \textsf{iterations}
    \end{minipage}
    \hfill
    \begin{minipage}{0.45\textwidth}
    \centering
    matrix \texttt{\detokenize{cat_ears_4_4}}, PCRAIG\\
    \includegraphics[width=\textwidth]{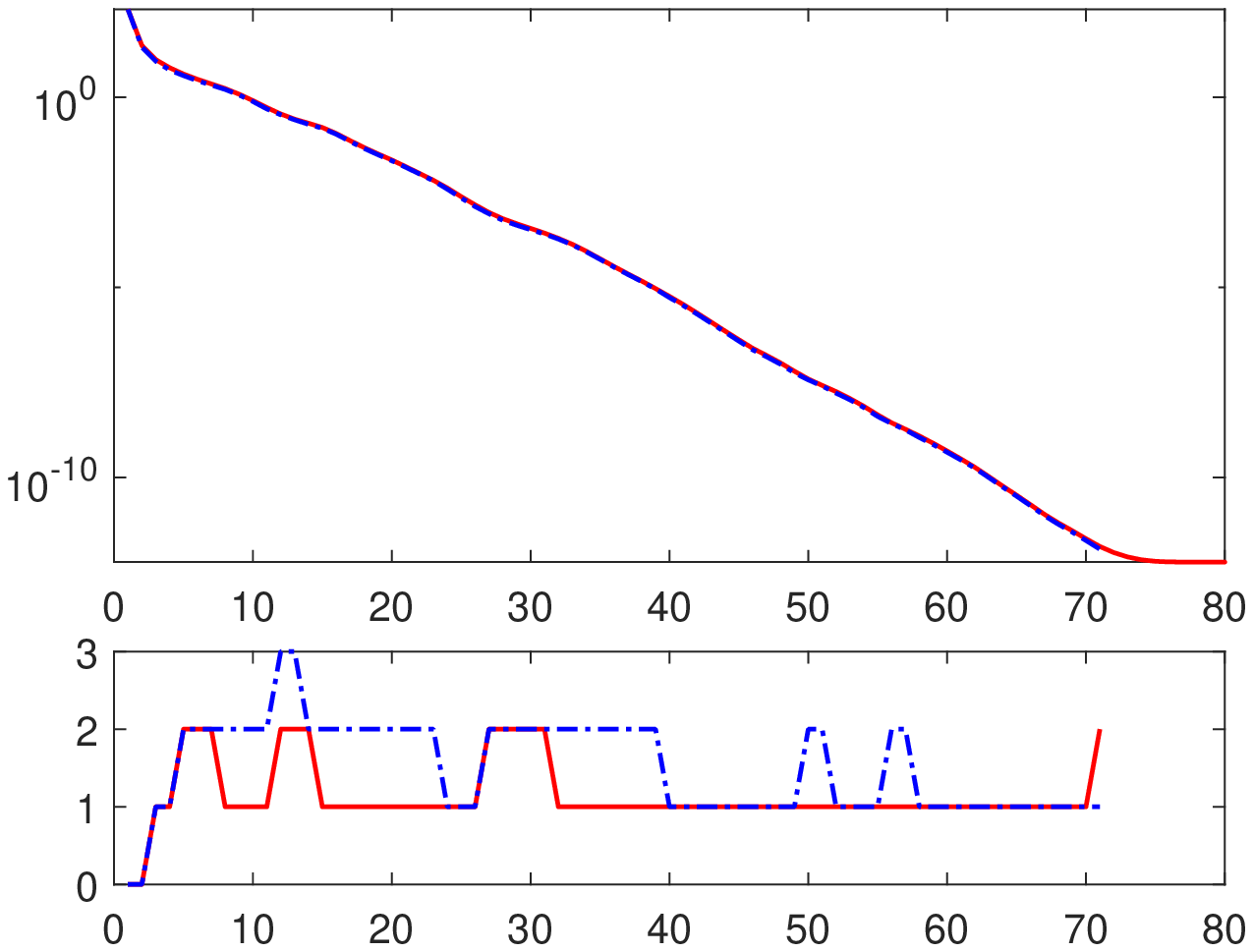}\\
    \textsf{iterations}
    \end{minipage}
    \caption{Matrix \texttt{\detokenize{cat_ears_4_4}} with preconditioning, PCGNE (left) and PCRAIG (right): error~$\| x-x_k \|$ and adaptive error estimate (top), adaptively chosen delay $k-\ell$ and its ideal value (bottom)}
    \label{pfig:cat_ears_4_4}
\end{figure}

\begin{figure}[htp]
    \centering
    \begin{minipage}{0.45\textwidth}
    \centering
    matrix \texttt{\detokenize{lp_pilot}}, PCGNE\\
    \includegraphics[width=\textwidth]{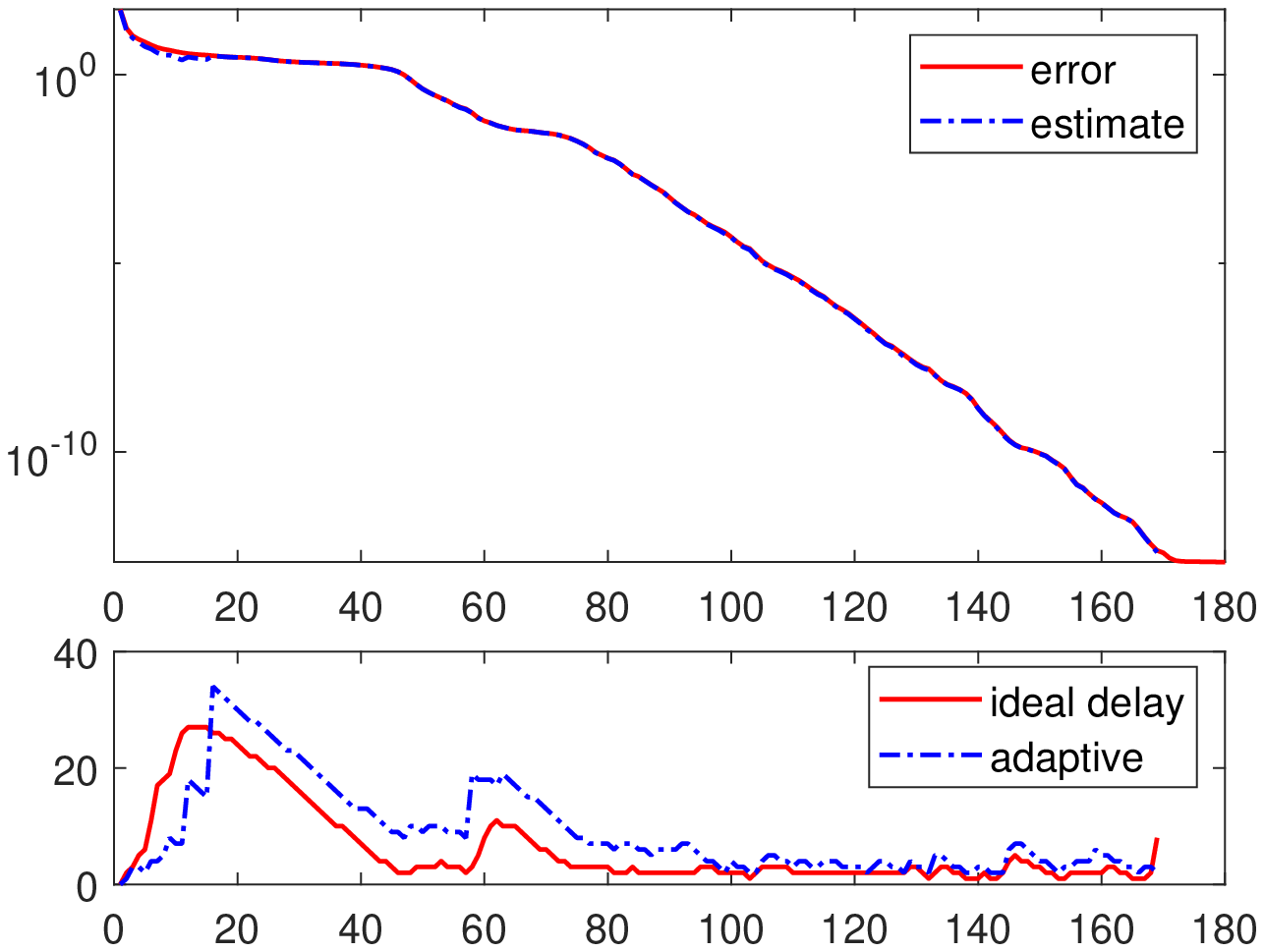}\\
    \textsf{iterations}
    \end{minipage}
    \hfill
    \begin{minipage}{0.45\textwidth}
    \centering
    matrix \texttt{\detokenize{lp_pilot}}, PCRAIG\\
    \includegraphics[width=\textwidth]{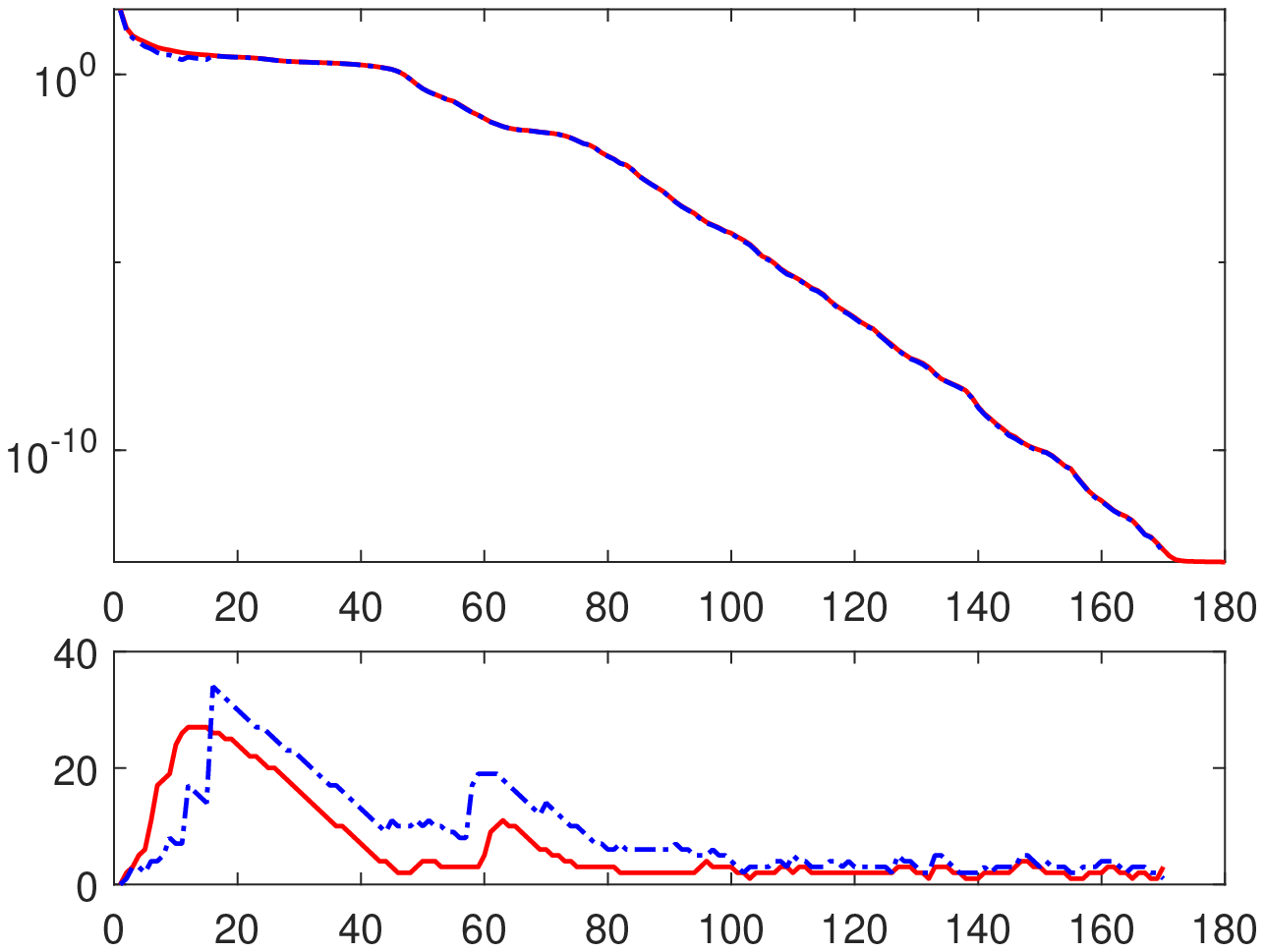}\\
    \textsf{iterations}
    \end{minipage}
    \caption{Matrix \texttt{\detokenize{lp_pilot}} with preconditioning, PCGNE (left) and PCRAIG (right): error~$\| x-x_k \|$ and adaptive error estimate (top), adaptively chosen delay $k-\ell$ and its ideal value (bottom)}
    \label{pfig:lp_pilot}
\end{figure}

\FloatBarrier

\section{Concluding discussion}
\label{sec:concl}

In this paper we focused on accurate estimation of errors in CG-like algorithms for solving least-squares and least-norm problems. Such estimates 
are computed just from the available coefficients, 
and their evaluation is very cheap.
Until a level of maximal attainable accuracy is reached,
the estimates are numerically reliable under the assumption
that the local orthogonality among consecutive vectors 
is preserved. The consecutive vectors we have in mind correspond to the underlying CG direction vector with the iteration index  $k-1$ and residual vector with index $k$. In this paper we did not analyze in detail the preservation of local orthogonality in the individual algorithms. However, based on previous results \cite{StTi02,StTi05,B:Me2006} for CG one can expect
that such an analysis is doable 
for all the algorithms discussed in this paper
with analogous results.
If needed, for example to verify the reliability of estimates in a particular application, the local-orthogonality terms can be evaluated, typically for the price of one extra inner product.

Our aim in this paper was to obtain an error estimate with a prescribed relative accuracy $\tau$. 
At the current iteration $k$, we estimate the quantity of interest related to some previous iteration $\ell\leq k$. We developed a heuristic strategy based on our previous results on CG \cite{MePaTi2021} to make the delay $k-\ell$ as short as possible. Our numerical results show that the suggested heuristic strategy is robust and reliable, and that the proposed delay $k-\ell$ is often almost optimal.

The suggested approach provides estimates that represent
lower bounds on the quantity of interest. Nevertheless, once a lower
bound $\Delta_{\ell:k}$ with a prescribed relative accuracy $\tau$
is obtained, the quantity
\[
\frac{\Delta_{\ell:k}}{1-\tau}
\]
represents an upper bound; see the discussion in \cite[Section 3.1]{MePaTi2021}.
Hence, we can also easily obtain tight (but not guaranteed) upper
bounds.

In summary, in CGLS and LSQR 
we obtain tight estimates of the quantity 
\[
\|x-x_{k}\|_{A^{T}A}^{2}=\left\Vert r_{k}\right\Vert ^{2}-\left\Vert b-b_{|\mathcal{R}(A)}\right\Vert ^{2}
\]
that can be used in stopping criteria of the algorithms. An example
of such a stopping criterion is discussed in \cite{ChPaTi2009} and
\cite{JiTi09}. It is suggested to stop the iterations when 
\[
\|x-x_{k}\|_{A^{T}A}^{2}\leq\alpha\|A\|\|x_{k}\|+\beta\|b\|,
\]
where $0\leq\alpha,\beta\ll1$ are some prescribed tolerances. In CGNE and CRAIG
we are able to estimate efficiently the quantity 
$
\|x-x_{k}\|^{2}.
$
Finally, assuming $x_{0}=0,$ our techniques can be straightforwardly applied  
for estimating the relative quantities 
\[
\frac{\|x-x_{k}\|_{A^{T}A}^{2}}{\|x\|_{A^{T}A}^{2}}\quad\mbox{and}\quad\frac{\|x-x_{k}\|^{2}}{\|x\|^{2}},
\]
in least-squares and least-norm problems, respectively; see also \cite{StTi05}.

We hope that the results presented in this paper will prove to be
useful in practical computations. They allow to approximate the errors
at a negligible cost during iterations of the considered algorithm,
while taking into account the prescribed
relative accuracy of the estimates.
The MATLAB codes of the algorithms with error estimates are available from the GitHub repository~\cite{Github_repo}.

\paragraph{Acknowledgement:} The work of Jan Papež has been supported by the Czech Academy of Sciences (RVO 67985840) and by the Grant Agency of the Czech Republic (grant no.~23-06159S).
The authors would like to thank G\'erard Meurant for careful reading of the manuscript and helpful comments, which have greatly improved the presentation.

\end{document}